\definecolor{darkgreen}{rgb}{0,0.5,0}
\definecolor{darkred}{rgb}{0.7,0,0}
\theoremstyle{plain}
\newtheorem{lemma}{Lemma}[section]
\newtheorem{thm}[lemma]{Theorem}
\newtheorem{prop}[lemma]{Proposition}
\newtheorem{cor}[lemma]{Corollary}
\theoremstyle{definition}
\newtheorem{defn}[lemma]{Definition}
\newtheorem{remark}[lemma]{Remark}
\newtheorem{rmk}[lemma]{Remark}
\numberwithin{equation}{section}
\newcommand{\al}{\alpha}
\newcommand{\be}{\beta}
\newcommand{\ga}{\gamma}
\newcommand{\Ga}{\Gamma}
\newcommand{\de}{\delta}
\newcommand{\dee}{\varrho}
\newcommand{\De}{\Delta}
\newcommand{\om}{\omega}
\newcommand{\Om}{\Omega}
\newcommand{\la}{\lambda}
\newcommand{\rh}{\rho}
\renewcommand{\th}{\theta}
\newcommand{\R}{\ensuremath{{\mathbb R}}}
\newcommand{\weakstarto}{\overset{*}{\rightharpoonup}}
\newcommand{\brmk}{\begin{rmk}}
\newcommand{\ermk}{\end{rmk}}
\newcommand{\partref}[1]{\hbox{(\csname @roman\endcsname{\ref{#1}})}}
\newcommand{\beq}{\begin{equation}}
\newcommand{\eeq}{\end{equation}}
\newcommand{\beqs}{\begin{equation*}}
\newcommand{\eeqs}{\end{equation*}}
\newcommand{\beqa}{\begin{equation}\begin{aligned}}
\newcommand{\eeqa}{\end{aligned}\end{equation}}
\newcommand{\beqas}{\begin{equation*}\begin{aligned}}
\newcommand{\eeqas}{\end{aligned}\end{equation*}}
\newcommand{\half}{\frac{1}{2}}
\newcommand{\eps}{\varepsilon}
\newcommand{\supp}{\text{supp}}
\newcommand{\loc}{{\text{\rm loc}}}
\newcommand{\F}{\mathcal{F}}
\newcommand{\dd}{{\rm d}}
\title[Relativistic Euler equations]{{
Global Entropy Solutions and Newtonian Limit\\ for the Relativistic Euler Equations
}}
\author{Gui-Qiang G. Chen}
\address{Gui-Qiang G. Chen:\, Mathematical Institute,
University of Oxford, Oxford,  OX2 6GG, UK.}
\email{\tt chengq@maths.ox.ac.uk}
\author{Matthew R. I. Schrecker}
\address{Matthew R. I. Schrecker:\, Department of Mathematics, University College London, 25 Gordon St, London, WC1H 0AY, UK.}
\email{\tt m.schrecker@ucl.ac.uk}
\keywords{Relativistic Euler equations, entropy solutions,
Newtonian limit, compactness, solution operator, vanishing viscosity method, entropy kernel,
entropy-flux kernel, fundamental solutions, compensated compactness}
\subjclass[2010]{35L65,35Q75,35L03,35L67,35Q35,35A01,76N01,83A05,35A35}
\date{\today}
\begin{document}
\begin{abstract}
We analyze the relativistic Euler equations of conservation laws of baryon number and momentum
with a general pressure law.
The existence of global-in-time bounded entropy solutions for the system
is established by developing a compensated compactness framework.
The proof relies on a careful analysis of the entropy and entropy-flux functions,
which are represented by the \it fundamental solutions \rm
of the entropy and entropy-flux equations for the relativistic Euler equations.
Based on the careful entropy analysis, we establish the compactness framework for sequences of both exact solutions
and approximate solutions of the relativistic Euler equations.
Then we construct approximate solutions via the vanishing viscosity method and employ our compactness framework
to deduce the global-in-time existence of entropy solutions.
The compactness of the solution operator is also established.
Finally, we apply our techniques to establish the convergence of the Newtonian limit
from the entropy solutions of the relativistic Euler equations to the classical Euler equations.
\end{abstract}
\maketitle

\section{Introduction}
The isentropic relativistic Euler equations of conservation laws of baryon number and momentum
are a natural relativistic extension
of the Euler equations for classical fluid flow ({\it i.e.}, in the setting of Newtonian mechanics).
These equations describe the motion of inviscid fluids
in the Minkowski space-time $(t,x)\in \R_+^2:=\R_+\times \R$ in special relativity, which are given by
\begin{equation} \label{eq:relativistic-Euler}
\begin{cases}
\partial_t\big(\frac{n}{\sqrt{1-u^2/c^2}}\big) + \partial_x\big(\frac{nu}{\sqrt{1-u^2/c^2}}\big) = 0, \\[2.5mm]
\partial_t\big(\frac{(\rho + p/c^2)u}{1-u^2/c^2}\big) + \partial_x\big(\frac{(\rho + p/c^2)u^2}{1-u^2/c^2} + p\big) = 0,
\end{cases}
\end{equation}
where $\rho$ and $p$ represent the mass-energy density and pressure respectively,
$u$ is the particle speed,
$n$ is the proper number density of baryons, and $c$ is the light speed.
Henceforth, we write $\eps:= \frac{1}{c^2}$ for notational convenience.
We close the system by imposing the equation of state of a barotropic gas: $p=p(\rho)$.

The proper number density of baryons is determined by the first law of thermodynamics:
$$
T \dd S = \frac{\dd\rho}{n} - \frac{\rho + \eps p}{n^2}\dd n,
$$
where $T$ is the temperature and $S$ is the entropy per baryon.
In particular, for a barotropic fluid under consideration ({\it i.e.}, $S$ is constant),
$$
\frac{\dd n}{n} = \frac{\dd\rho}{\rho + \eps p},
$$
so that
$$
n = n(\rho) = n_0e^{\int_{0}^{\rho}\frac{\dd s}{s + \eps p(s)}}.
$$
By rescaling the first equation in \eqref{eq:relativistic-Euler} if necessary,
we may assume without loss of generality that $n_0 =1$.
By way of comparison with the classical Euler equations,
we observe that, in the Newtonian limit as the light speed $c\to\infty$ (equivalently, $\eps\to0$),
$n(\rho)$ converges to $\rho$, locally uniformly.

Concerning the pressure, a typical example is: $p(\rho) = \kappa \rho^{\gamma}$,
the case of a polytropic (or gamma-law) gas, with the adiabatic exponent $\gamma \in (1,3)$
and  constant $\kappa = \frac{(\gamma - 1)^2}{4\gamma}$.
In this paper, we deal with a more general class of pressure laws, whose explicit conditions
will be given later in \eqref{ass:stricthyperbolicity}--\eqref{ass:genuine-nonlinearity} and \eqref{ass:pressure}.

We focus on the Cauchy problem:
\begin{equation}\label{Cauchy-problem}
(\rho, u)|_{t=0}=(\rho_0(x), u_0(x)) \qquad \mbox{for $x\in \R$}.
\end{equation}
Our approach to the relativistic Euler equations is motivated by the successful strategies
employed in resolving the Cauchy problem for the isentropic Euler equations in the classical setting.
To motivate this comparison, we observe that, formally, in the Newtonian limit ($c\rightarrow \infty$),
system \eqref{eq:relativistic-Euler} reduces to the classical isentropic Euler equations
for compressible fluids:
\begin{equation}\label{eq:classical-Euler}
\begin{cases}
\partial_t\rho + \partial_x(\rho u) = 0,\\[1mm]
\partial_t(\rho u) +  \partial_x(\rho u^2 + p(\rho)) =0.
\end{cases}
\end{equation}
This formal observation raises the question whether the limit here can be taken rigorously:
Do the global entropy solutions of the relativistic Euler equations \eqref{eq:relativistic-Euler}
converge to an entropy solution of the classical Euler equations \eqref{eq:classical-Euler} as $c\to\infty$?
One of the main contributions of this paper is to give an affirmative answer
to this question for the general class of pressure laws including the polytropic case.

To place the relativistic Euler equations in the
general framework of hyperbolic systems of conservation laws,
we introduce some additional notation.
Denote
\beqa\label{eq:conservedvariables}
U = (\frac{n}{\sqrt{1 - \eps u^2}},\frac{(\rho + \eps p(\rho))u}{1 - \eps u^2})^\top,\quad\,\,
F(U) = (\frac{nu}{\sqrt{1 - \eps u^2}},\frac{\rho u^2 + p(\rho)}{1 - \eps u^2})^\top.
\eeqa
Then system \eqref{eq:relativistic-Euler} takes the form:
$$
\partial_t U + \partial_xF(U) =0.
$$
We assume throughout the conditions of strict hyperbolicity:
\beq\label{ass:stricthyperbolicity}
p'(\rho)>0\qquad \text{ for $\rho>0$,}
\eeq
and genuine nonlinearity:
\beq\label{ass:genuine-nonlinearity}
\rho p''(\rho)+2p'(\rho)>0 \qquad \text{ for $\rho>0$}.
\eeq
We remark that, strictly speaking, the condition of genuine nonlinearity for
the relativistic Euler system \eqref{eq:relativistic-Euler}
reads
\beq\label{ass:genuine-nonlinearity-relativistic}
\rho p''(\rho)+2p'(\rho)+\eps\big(p(\rho)p''(\rho)-2p'(\rho)^2\big)>0 \qquad \text{ for $\rho>0$.}
\eeq
In a relativistic fluid, the sound speed is given by the expression:
$$
c_{\rm s}(\rho)=\sqrt{p'(\rho)}.
$$
Thus, to obey the usual laws of relativity, $c_{\rm s}(\rho)$ must always be bounded by the light speed:
$$
c_{\rm s}(\rho)<\frac{1}{\sqrt{\eps}}.
$$
We define $\rho_{\max}^\eps$ such that $c_{\rm s}(\rho_{\max}^\eps)=\frac{1}{\sqrt{\eps}}$
if such a finite $\rho_{\max}^\eps$ exists,
or $\infty$ otherwise.

The relativistic Euler system \eqref{eq:relativistic-Euler} was derived by Taub in \cite{Taub},
in which he also calculated the Rankine-Hugoniot conditions across a shock for the system
and discussed possible pressure laws for relativistic gases.
Further discussion on the pressure-density relation may be found in the work of Thorne \cite{Thorne},
who suggested that the pressure should grow linearly with the density at high densities,
while behaving as a gamma-law gas near the vacuum.

The first global existence result for the relativistic Euler equations was obtained
by Smoller-Temple \cite{SmollerTemple} in the case of an isothermal flow
($\gamma=1$) under the assumption of bounded total variation of the initial data \eqref{Cauchy-problem}.
In this setting,
the Glimm scheme is used to create a convergent sequence of approximate solutions by the random
choice method.
Subsequently, Ding-Li \cite{DingLi,DingLi2} again employed the Glimm scheme to obtain
the global existence of entropy solutions of the relativistic piston problem
for the isentropic Euler equations with the initial data of small total variation,
in which they were also able to show that, in the Newtonian limit,
the relativistic solutions
converge to the entropy solution of the classical piston
problem for the Euler equations.
Liang \cite{Liang} studied the formation of shocks and the structure of simple waves,
based on the work of Taub \cite{Taub}.
The existence of entropy solutions with large data was obtained in Hsu-Lin-Makino \cite{HsuLinMakino}
for a special class of pressure laws under the assumption of sufficiently large speed of light (or equivalently, small data).
Other large data results were obtained by Chen-Li \cite{ChenLi}, showing the existence and stability
of entropy solutions of the Riemann problem for this system,
and the same properties were shown for the variant system of relativistic Euler equations
(system \eqref{eq:relativistic-Euler-alternative} below) in \cite{ChenLi2}.
For system \eqref{eq:relativistic-Euler-alternative}, Li-Feng-Wang  \cite{LiFengWang} were also able
to employ the Glimm scheme to show the existence of entropy solutions for a class of large initial data.
All of these results require
restrictions on the type of pressure laws that can be handled
as well as, for many of them, the conditions on the smallness of total variation.
We significantly weaken these requirements for the existence and compactness of entropy solutions in this paper,
leading to the following theorem, which is our first main result.

\begin{thm}[Existence and Compactness of Entropy Solutions to the Relativistic Euler Equations]\label{thm:relativistic-main}
Let $(\rho_0,u_0)$ be measurable and bounded initial data satisfying
\begin{equation}\label{ID-1}
|u_0(x)|\leq M_0<\frac{1}{\sqrt{\eps}}, \hspace{3mm}  0\leq\rho_0(x)\leq \rho_{M_0}<\rho_{\max}^\eps  \qquad\,\, \text{ for a.e. $x\in\R$,}
\end{equation}
for some constants $M_0>0$ and $\rho_{M_0}$ independent of $\eps$.
Let the pressure function $p(\rho)$ satisfy  \eqref{ass:stricthyperbolicity}--\eqref{ass:genuine-nonlinearity}
for $\rho>0$ and
\begin{equation}\label{ass:pressure}
 p(\rho)=\kappa\rho^\gamma\left(1+P(\rho)\right), \hspace{8mm} |P^{(n)}(\rho)|\leq C\rho^{\gamma-1-n} \text{\,\, for $\,0\leq n\leq 4$},
\end{equation}
for some $\gamma\in(1,3)$.
Then there is $\eps_0>0$ such that, if $\eps\leq\eps_0$,
there exists an entropy solution $(\rho,u)$ of \eqref{eq:relativistic-Euler} $($in the sense
of Definition {\rm \ref{def:relativistic-entropy-solution}} below$)$
such that
$$
|u(t,x)|\leq M<\frac{1}{\sqrt{\eps}}, \hspace{3mm}   0\leq\rho(t,x)\leq \rho_M <\rho_{\max}^\eps  \qquad\,\,
\text{ for a.e. $(t,x)\in\R^2_+:=[0,\infty)\times \R$,}
$$
where the constants $M$ and $\rho_M$ depend only on $M_0$ and $\rho_{M_0}$, independent of $\eps$.
Furthermore, the solution operator determined by the above is compact in $L^r_{\rm loc}(\mathbb{R}^2_+), 1\le r<\infty$, for $t>0$.
\end{thm}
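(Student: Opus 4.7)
The plan is to combine the vanishing viscosity method with the compensated compactness framework built on the entropy and entropy-flux kernels developed earlier in the paper. The argument has three main strands: uniform $L^\infty$ estimates via invariant regions; $H^{-1}_{loc}$ compactness of entropy dissipation measures; and reduction of the resulting Young measure to a Dirac mass using the structure of the entropy kernel. All bounds have to be tracked uniformly in the viscosity parameter $\mu$ and in $\eps$, since the same framework will later drive the Newtonian limit.

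First I would introduce the parabolic regularization
\begin{equation*}
\partial_t U^\mu + \partial_x F(U^\mu) = \mu\,\partial_{xx} U^\mu,
\end{equation*}
with smoothed and truncated initial data $(\rho_0^\mu,u_0^\mu)$ still satisfying \eqref{ID-1}. Standard parabolic theory yields local classical solutions, and the key step is to produce an a priori $L^\infty$ bound by showing that the region $\{|u|\le M,\ 0\le\rho\le\rho_M\}$ cut out in the Riemann-invariant plane $(w,z)$ of the hyperbolic part is invariant for the regularized flow. Genuine nonlinearity \eqref{ass:genuine-nonlinearity} and the sub-light-speed condition ensure that a maximum principle applies to $w$ and $-z$, giving $M,\rho_M$ depending only on $M_0,\rho_{M_0}$ and independent of $\mu$ and of $\eps\le\eps_0$; this also yields the global existence of $U^\mu$.

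Next, for every admissible test function $\psi$ I form the weak entropy/entropy-flux pair $(\eta_\psi,q_\psi)$ by integrating $\psi$ against the entropy and entropy-flux kernels constructed earlier. The identity
\begin{equation*}
\partial_t\eta_\psi(U^\mu)+\partial_x q_\psi(U^\mu)=\mu\,\partial_{xx}\eta_\psi(U^\mu)-\mu\,\nabla^2\eta_\psi(U^\mu)(\partial_x U^\mu,\partial_x U^\mu),
\end{equation*}
together with the energy-type estimate $\mu\|\partial_x U^\mu\|_{L^2_{t,x}}^2\le C$, gives $H^{-1}_{loc}$-compactness of the left-hand side via Murat's lemma (the first term being compact in $H^{-1}_{loc}$, the second bounded in $L^1_{loc}$). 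Passing to a subsequence, $U^\mu$ generates a Young measure $\nu_{t,x}$ supported in the invariant region, and applying the div-curl lemma to two such pairs yields the Tartar commutation identity. Following a Chen--LeFloch-style reduction argument adapted to the relativistic setting and using the explicit large-$s$, vacuum, and sonic-boundary asymptotics of the entropy kernel, I would show that $\nu_{t,x}$ reduces to a Dirac mass (possibly vacuum) for a.e.\ $(t,x)$, giving $U^\mu\to U$ a.e. The limit $(\rho,u)$ is an entropy solution in the sense of Definition~\ref{def:relativistic-entropy-solution} by lower semicontinuity on convex entropies.

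The main obstacle is this reduction step: the relativistic entropy kernel depends nontrivially on $\eps$ and is singular at both the vacuum and the relativistic sonic boundary $\rho_{\max}^\eps$, so every asymptotic estimate has to be carried out with explicit $\eps$-tracking to keep the bounds uniform, and one must verify that the commutation identity still forces support reduction even when $\gamma$ ranges over all of $(1,3)$ under the general pressure law \eqref{ass:pressure}. Once this is in place, the compactness of the solution operator in $L^p_x$ for $t>0$ follows by applying the same Young measure reduction directly to a bounded sequence of \emph{exact} entropy solutions rather than to parabolic approximations: the $H^{-1}_{loc}$-compactness of entropy dissipation for exact entropy solutions is built into the framework, so the argument is identical and yields strong $L^p_{loc}$ convergence for a.e.\ $t>0$.
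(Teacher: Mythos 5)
Your overall strategy matches the paper exactly: parabolic viscosity approximation, invariant-region $L^\infty$ bounds via the Riemann invariants, entropy-dissipation estimates $\Rightarrow$ $H^{-1}_{\loc}$ compactness, div-curl lemma $\Rightarrow$ Tartar commutation, reduction of the Young measure using the entropy-kernel singularity structure, and then an application of the same framework to exact solutions for the compactness of the solution operator. However, there is a genuine gap in your treatment of the energy estimate and the resulting $H^{-1}$ compactness argument.

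You assert the estimate $\mu\,\|\partial_x U^\mu\|_{L^2_{t,x}}^2\le C$. This unweighted bound is not what the relative-entropy method produces, and it is not uniformly available near the vacuum. What one actually gets from the convex physical entropy $\eta^*$ (Lemma~\ref{lemma:relativisticderivativebounds}) is the weighted estimate
\begin{equation*}
\mu\int_0^T\!\!\int_\R\big(\rho^{\gamma-2}|\rho_x|^2+\rho\,|u_x|^2\big)\,\dd x\,\dd t\le C,
\end{equation*}
whose weight $\rho^{\gamma-2}$ degenerates near $\rho=0$ when $\gamma>2$. As a consequence, the term $\mu\,\partial_{xx}\eta$ cannot be shown to vanish in $H^{-1}_{\loc}$ uniformly in $\mu$ using your stated bound alone, because you lose control of $|\rho_x|$ in the low-density region. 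The paper closes this hole with an additional estimate (Lemma~\ref{lemma:relativisticga>2rhoxbound}) that controls $\mu\int\!\!\int_{\{\rho<\Delta\}}|\rho_x|^2$ via a multiplier of the form $\min\{N,\Delta\}$ applied to the mass equation, and then balances $\Delta=\mu^\alpha$ for a suitable $\alpha$. Only after this step, plus the compensated-compactness interpolation theorem (compactness in $W^{-1,q}_{\loc}$ for some $q<2$ together with boundedness in $W^{-1,\infty}_{\loc}$ upgrading to compactness in $H^{-1}_{\loc}$), does the $H^{-1}_{\loc}$ compactness follow. As written, your energy estimate is too strong to establish directly and too coarse to expose the case split $\gamma\le 2$ versus $\gamma>2$, which is one of the main technical points in the viscosity argument. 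A secondary imprecision: the kernel analysis is carried out on $\{0\le\rho\le\rho_M\}$ with $\rho_M<\rho_{\max}^\eps$ guaranteed by the invariant region, so the sonic-boundary singularity at $\rho_{\max}^\eps$ is never actually encountered; the hard singularities to track are the vacuum asymptotics of the fractional derivatives $\partial_v^{\lambda+1}\chi$ and $\partial_v^{\lambda+1}\sigma$.
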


\begin{remark}
Condition \eqref{ass:pressure} can be relaxed to the same condition as in Chen-LeFloch \cite{ChenLeFloch2}.
For brevity, we focus on the class of pressure laws satisfying condition \eqref{ass:pressure} in this paper.
\end{remark}

In addition, our analysis of the relativistic Euler equations is also sufficient to
control the convergence of a sequence of solutions of the relativistic Euler equations as $\eps\to0$,
allowing us to prove our second main theorem.

\begin{thm}[Convergence of the Newtonian Limit]\label{thm:Newtonianlimit}
Let $(\rho_0,u_0)\in (L^\infty(\mathbb{R}))^2$
satisfy \eqref{ID-1} with $M_0$ and $\rho_{M_0}$ independent of $\eps$.
Let $(\rho^\eps,u^\eps)$ for $\eps\in(0,\eps_0)$ be an entropy solution
of \eqref{eq:relativistic-Euler}, determined by Theorem {\rm \ref{thm:relativistic-main}} above,
with light speed $c=\frac{1}{\sqrt{\eps}}$
and initial data $(\rho^\eps_0,u_0^\eps)\in (L^\infty(\mathbb{R}))^2$ with $\rho_0^\eps\geq 0$ such that
\begin{equation}\label{ID-2}
0\le c_{\rm s}(\rho^\eps_0(x)),\, |u_0^\eps(x)|
<\frac{1}{\sqrt{\eps}} \qquad\; \text{ for all $\eps\in(0,\eps_0)$ and a.e. $x\in\R$},
\end{equation}
and $(\rho_0^\eps, u_0^\eps)\to (\rho_0,u_0)$ a.e. as $\eps\to0$.
Then there exist $M>0$ and $\rho_M$, independent of $\eps$, such that
$$
|u^\eps(t,x)|\leq M< \frac{1}{\sqrt{\eps}}, \hspace{3mm}    0\leq\rho^\eps(t,x)\leq \rho_M <\rho_{\max}^\eps
\qquad\, \text{ for a.e. $(t,x)\in\mathbb{R}^2_+$},
$$
and,  up to a subsequence, $(\rho^\eps,u^\eps)\rightarrow(\rho,u)$ a.e. and in $L^r_{\loc}(\mathbb{R}^2_+)$ for all $r\in[1,\infty)$ as $\eps\to 0$,
where $(\rho,u)$ is an entropy solution of the classical Euler equations \eqref{eq:classical-Euler}
with initial data $(\rho_0,u_0)$
satisfying
$$
|u(t,x)|\leq M, \hspace{3mm}  0\leq\rho(t,x)\leq \rho_M  \qquad\, \text{ for a.e. $(t,x)\in\mathbb{R}^2_+$}.
$$
\end{thm}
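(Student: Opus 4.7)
The strategy is to view Theorem \ref{thm:Newtonianlimit} as an application of the same compensated-compactness machinery developed for Theorem \ref{thm:relativistic-main}, but with the additional parameter $\eps\to 0$ playing the role formerly played by the viscosity parameter. First I would invoke the $\eps$-independent bounds of Theorem \ref{thm:relativistic-main}, which immediately give $|u^\eps(t,x)|\leq M$ and $0\leq \rho^\eps(t,x)\leq \rho_M$ uniformly in $\eps$ (after transferring the initial data bounds from $(\rho_0^\eps,u_0^\eps)$ to $(\rho_0,u_0)$ using \eqref{ID-1} and the a.e.\ convergence of initial data). This yields weak-$\ast$ precompactness of $(\rho^\eps,u^\eps)$ in $L^\infty_\loc$, and the task becomes upgrading this to a.e.\ (and $L^r_\loc$) convergence for a subsequence.

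The core step is to build a compactness framework tailored to the Newtonian limit. Given any classical weak entropy-entropy flux pair $(\eta,q)$ for \eqref{eq:classical-Euler}, generated from the classical entropy kernel, I would construct a family of relativistic entropy-entropy flux pairs $(\eta^\eps,q^\eps)$ for \eqref{eq:relativistic-Euler} from the relativistic entropy and entropy-flux kernels, chosen so that $(\eta^\eps,q^\eps)\to(\eta,q)$ locally uniformly as $\eps\to 0$ (together with enough derivatives). The observation that $n(\rho)\to\rho$ locally uniformly, combined with the representation of the relativistic entropy kernel as the fundamental solution of the governing entropy equation, should give the pointwise consistency; the compensated-compactness analysis of Theorem \ref{thm:relativistic-main} then provides the quantitative control. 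Because $(\rho^\eps,u^\eps)$ is an entropy solution in the sense of Definition \ref{def:relativistic-entropy-solution}, one obtains
\[
\partial_t \eta^\eps(\rho^\eps,u^\eps)+\partial_x q^\eps(\rho^\eps,u^\eps)\in \text{a compact set of }H^{-1}_\loc(\R^2_+),
\]
uniformly in $\eps$. Applying the div-curl lemma to two such pairs, together with the reduction argument for the Young measure (which, in the Newtonian limit, is essentially the classical Euler reduction argument, since the $\eps$-dependent kernels converge to the classical ones), forces the Young measure to be a Dirac mass and yields a.e.\ convergence.

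Once a.e.\ convergence of $(\rho^\eps,u^\eps)\to(\rho,u)$ is established, the $L^r_\loc$ convergence for $r\in[1,\infty)$ follows from the uniform $L^\infty$ bound and dominated convergence. Passing to the limit in the weak formulation of \eqref{eq:relativistic-Euler} is then straightforward: each nonlinear flux term converges because $n(\rho)\to\rho$ locally uniformly and $(1-\eps u^2)^{\pm 1/2}\to 1$ uniformly on the compact range of $u^\eps$; the entropy inequalities for the classical Euler equations are recovered by testing with the constructed pairs $(\eta^\eps,q^\eps)$ and passing to the limit. The main obstacle I expect is the uniform-in-$\eps$ control of the relativistic entropy and entropy-flux kernels, in particular establishing their locally uniform convergence to the classical kernels with a rate (or at least uniform bounds on the relevant derivatives) sufficient for the $H^{-1}$-compactness argument; this is precisely the step that forces the pressure hypothesis \eqref{ass:pressure} to be preserved in the relativistic regime and is where the detailed fundamental-solution analysis advertised in the abstract must be deployed.
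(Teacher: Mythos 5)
Your overall strategy aligns with the paper's: pass to a weak-$\ast$ limit, use the div-curl lemma to obtain the Tartar commutation relation for the \emph{classical} entropy kernels, and then invoke the classical reduction theorem (the paper cites \cite{ChenLeFloch}, Theorem 4.2, stated as Theorem \ref{thm:classicalreduction}) rather than the new $\eps$-dependent one. The supporting idea of using the locally uniform convergence of the relativistic kernels to the classical ones (Theorems \ref{thm:chipositive} and \ref{th:fluxremainderexistenceandregularity}) to control the difference $\eta-\eta^\eps$ is also correct.

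However, there is a genuine gap at the pivotal step. You assert that the $H^{-1}_{\loc}$ compactness of the entropy dissipation measures follows ``because $(\rho^\eps,u^\eps)$ is an entropy solution'' together with ``the compensated-compactness analysis of Theorem \ref{thm:relativistic-main}.'' But the compactness argument in \S\ref{sec:relativisticartificial} that backs Theorem \ref{thm:relativistic-main} is specific to \emph{viscosity approximations}: the key inputs are the energy estimates (Lemmas \ref{lemma:relativisticderivativebounds} and \ref{lemma:relativisticga>2rhoxbound}) that control $\delta\|U^\delta_x\|$, and none of that is available for an exact entropy solution of \eqref{eq:relativistic-Euler}. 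For a sequence of exact entropy solutions with a vanishing parameter $\eps$, the mechanism is different: one uses that $\partial_t\eta^\eps(U^\eps)+\partial_xq^\eps(U^\eps)\le 0$ for \emph{convex} weak entropy pairs, so that these dissipation measures lie in the negative cone; Murat's lemma \cite{Murat2} (the positive/negative cone of $H^{-1}$ bounded sets embeds compactly into $W^{-1,q}_{\loc}$, $q<2$), combined with the uniform $W^{-1,\infty}_{\loc}$ bound and the interpolation compactness theorem \cite{Chen2,DingChenLuo}, then gives $W^{-1,2}_{\loc}$ compactness. The same works for concave entropies, and a density argument (the linear span of convex and concave test functions is dense) then extends the commutation relation to all weak entropy pairs. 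You also need the decomposition
$$
\partial_t\eta(U^\eps)+\partial_xq(U^\eps)
=\big(\eta-\eta^\eps\big)(U^\eps)_t+\big(q-q^\eps\big)(U^\eps)_x
+\eta^\eps(U^\eps)_t+q^\eps(U^\eps)_x,
$$
so that the Murat-lemma argument handles the relativistic term and the kernel-convergence estimates control the difference term. Your write-up does not identify either the sign structure of the dissipation or the Murat-lemma mechanism; without them, the claimed $H^{-1}_{\loc}$ compactness does not follow, and the div-curl step cannot be launched.
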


We remark that an alternative $2\times2$ system of conservation laws in the theory of special relativity
(also sometimes called the relativistic Euler equations in the literature)
is the following system of conservation laws of energy and momentum:
\begin{equation}\label{eq:relativistic-Euler-alternative}
\begin{cases}
\partial_t\big(\rho+\frac{\eps(\rho+\eps p)u^2}{1-\eps u^2}\big) + \partial_x\big(\frac{(\rho+\eps p)u}{1-\eps u^2}\big) = 0,\\[2mm]
\partial_t\big(\frac{(\rho+\eps p)u}{1-\eps u^2}\big) +  \partial_x\big(\frac{(\rho+\eps p)u^2}{1-\eps u^2}+ p(\rho)\big) =0.
\end{cases}
\end{equation}
System \eqref{eq:relativistic-Euler-alternative} has the same eigenvalues and Riemann invariants as
those for \eqref{eq:relativistic-Euler}, which implies that
the governing entropy equation for system \eqref{eq:relativistic-Euler-alternative} is the same as
that for \eqref{eq:relativistic-Euler}, so that our analysis of the entropy functions for \eqref{eq:relativistic-Euler}
and the associated compactness framework are also extended to the alternative system, \eqref{eq:relativistic-Euler-alternative}.
Therefore, we also obtain the following theorem.

\begin{thm}\label{thm:alternativesystem}
Let $(\rho_0,u_0)\in (L^\infty(\mathbb{R}))^2$ with $\rho_0\geq 0$, and let
the pressure function $p(\rho)$ satisfy  \eqref{ass:stricthyperbolicity}--\eqref{ass:genuine-nonlinearity}
for $\rho>0$ and \eqref{ass:pressure} for some $\gamma\in (1,3)$.  Then the following statements hold{\rm :}
\begin{enumerate}
\item[\rm (i)]
Let $(\rho_0,u_0)$ satisfy \eqref{ID-1}.
Then there is $\eps_0>0$ such that, if $\eps\leq\eps_0$,
there exists an entropy solution $(\rho^\eps,u^\eps)$ of \eqref{eq:relativistic-Euler-alternative} satisfying
$$
|u^\eps(t,x)|\leq M<\frac{1}{\sqrt{\eps}},  \hspace{3mm}   0\leq\rho^\eps(t,x)\leq \rho_M<\rho_{\max}^\eps
\qquad \text{ for a.e. $(t,x)\in\R^2_+$,}
$$
for some constants $M$ and $\rho_M$ depending only on the initial data, but independent of $\eps$.
Furthermore, for any fixed $\eps>0$, the solution operator $(\rho^\eps, u^\eps)(t,\cdot), t>0$,
determined by the above is compact in $L^r_{\rm loc}(\mathbb{R}^2_+)$ for $1\leq r <\infty$.

\smallskip
\item[\rm (ii)]
Let  $(\rho^\eps,u^\eps)$ for $\eps\in(0,\eps_0)$ is an entropy solution
of \eqref{eq:relativistic-Euler}, determined by {\rm (i)} above, with light speed $c=\frac{1}{\sqrt{\eps}}$
and initial data $(\rho^\eps_0,u_0^\eps)\in (L^\infty(\mathbb{R}))^2$ with $\rho_0^\eps\geq 0$
satisfying \eqref{ID-2} such that $(\rho_0^\eps, u_0^\eps)\to (\rho_0,u_0)$ a.e. as $\eps\to0$.
Then,
up to a subsequence, $(\rho^\eps,u^\eps)\rightarrow(\rho,u)$ a.e. and in $L^r_{\loc}(\mathbb{R}^2_+)$ for all $r\in[1,\infty)$ as $\eps\to 0$,
where $(\rho,u)$ is an entropy solution of the classical Euler equations \eqref{eq:classical-Euler}
with initial data $(\rho_0,u_0)$
satisfying
$$
|u(t,x)|\leq M, \hspace{3mm}  0\leq\rho(t,x)\leq \rho_M \qquad \text{ for a.e. $(t,x)\in\mathbb{R}^2_+$}
$$
for some constants $M$ and $\rho_M$.
\end{enumerate}
\end{thm}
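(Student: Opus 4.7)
The plan is to reduce Theorem \ref{thm:alternativesystem} to the machinery already established for Theorems \ref{thm:relativistic-main}--\ref{thm:Newtonianlimit}, exploiting the fact that systems \eqref{eq:relativistic-Euler} and \eqref{eq:relativistic-Euler-alternative} share identical eigenvalues and Riemann invariants in the physical variables $(\rho,u)$. Indeed, writing the Lax entropy condition $\nabla_U q=(\nabla_U\eta)\,F'(U)$ in $(\rho,u)$-coordinates produces a second-order linear wave equation for $\eta(\rho,u)$ whose coefficients depend only on the sound-speed structure and the characteristic directions; both systems therefore admit the same family of entropy pairs. In particular, the entropy-kernel and entropy-flux-kernel representations developed for Theorem \ref{thm:relativistic-main}, together with the quadratic commutator estimates that drive the reduction of the Young measure via compensated compactness, transfer verbatim to \eqref{eq:relativistic-Euler-alternative}.

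For part (i) the remaining ingredient is to produce approximate solutions to \eqref{eq:relativistic-Euler-alternative} with the required uniform $L^\infty$ bounds. I would implement a vanishing-viscosity regularisation, adding $\mu\,\partial_{xx}U$ to the right-hand side together with a smooth truncation of the initial data, and verify that the physical invariant regions in Riemann-invariant space are positively invariant for this parabolic system. This amounts to checking a Chueh--Conley--Smoller sign condition for the diffusion pushed back into $(\rho,u)$-coordinates through the conserved-variable map of \eqref{eq:relativistic-Euler-alternative}, which differs from that of \eqref{eq:relativistic-Euler}. Once this yields bounds $|u^{\eps,\mu}|\le M<\tfrac{1}{\sqrt{\eps}}$ and $0\le\rho^{\eps,\mu}\le\rho_M<\rho_{\max}^\eps$ independent of $\mu$ and of $\eps\le\eps_0$, the $H^{-1}_{\loc}$ compactness of entropy dissipation measures follows by the usual duality argument (multiplying the entropy inequalities by smooth test functions and using the uniform energy estimate on the viscous dissipation to split into the two parts required by Murat's lemma). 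The compensated compactness framework from Theorem \ref{thm:relativistic-main}, applied to the shared entropy pairs, then reduces the Young measure to a Dirac mass and yields the entropy solution; compactness of the solution operator in $L^r_x$ is an immediate byproduct, obtained by applying the same reduction to any uniformly bounded family of entropy solutions.

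Part (ii) proceeds along the lines of Theorem \ref{thm:Newtonianlimit}. Since the $L^\infty$ bounds from (i) are independent of $\eps$, and since the fluxes, eigenvalues, and Riemann invariants of \eqref{eq:relativistic-Euler-alternative} converge locally uniformly to those of \eqref{eq:classical-Euler} as $\eps\to 0$ (using also $n(\rho)\to\rho$), one checks that the entropy and entropy-flux kernels of \eqref{eq:relativistic-Euler-alternative} converge to their classical analogues on any bounded set. Uniform-in-$\eps$ $H^{-1}_{\loc}$ compactness of the entropy dissipation measures then suffices, via the compensated compactness commutator identity, to reduce the Young measure of $(\rho^\eps,u^\eps)$ to a Dirac mass supported on a Lax entropy solution of \eqref{eq:classical-Euler} with initial data $(\rho_0,u_0)$. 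The main obstacle throughout is the invariant-region analysis at the modified conservation law: because the conserved-variable map of \eqref{eq:relativistic-Euler-alternative} degenerates as $u\to\pm\tfrac{1}{\sqrt{\eps}}$ or $\rho\to\rho_{\max}^\eps$, the positive-invariance check must be performed at the level of Riemann invariants rather than conserved variables, and one must control the $\eps$-dependent corrections to ensure that the resulting thresholds $M$ and $\rho_M$ remain uniform in $\eps$.
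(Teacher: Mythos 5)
Your proposal is correct and follows essentially the same route as the paper: identify that \eqref{eq:relativistic-Euler} and \eqref{eq:relativistic-Euler-alternative} share the same eigenvalues and Riemann invariants, hence the same entropy equation and entropy pairs, so the entropy-kernel analysis and compensated compactness framework transfer directly, with the vanishing-viscosity construction and Newtonian-limit argument redone for the new conserved-variable map. Your explicit remark that the invariant-region check must be performed at the level of Riemann invariants for the alternative viscous system (which has a different conserved-variable map) is exactly the point the paper compresses into ``an easy exercise,'' and is a worthwhile amplification.
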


Before we describe our approach for the proofs of these results,
we recall the situation for the classical Euler equations \eqref{eq:classical-Euler}.
DiPerna \cite{DiPerna2} first showed the existence of entropy solutions of \eqref{eq:classical-Euler}
for the case of a gamma-law gas with $\ga=1+\frac{2}{N}$, $N$ odd and $N\geq 5$,
by developing the method of compensated compactness of Murat--Tartar \cite{Murat1,Tartar}.
The general case  $\ga\in(1,\frac{5}{3}]$ for polytropic gases was first solved in Chen \cite{Chen1} and Ding--Chen--Luo \cite{DingChenLuo}
by developing new techniques for entropy analysis which involve fractional derivatives and the Hilbert transform,
combined with the compensated compactness argument.
The case $\ga\geq 3$ was subsequently solved by Lions--Perthame--Tadmor \cite{LionsPerthameTadmor} through the introduction of the kinetic
formulation, before Lions--Perthame--Souganidis \cite{LionsPerthameSouganidis} solved the problem for the remaining interval $\ga\in(\frac{5}{3},3)$,
simplifying the proof for all $\ga\in(1,3)$.
Chen--LeFloch \cite{ChenLeFloch, ChenLeFloch2} solved the case of a more general pressure law,
under the assumptions of strict hyperbolicity and genuine nonlinearity away from the vacuum and an approximate gamma-law form close to the vacuum;
see \cite{ChenLeFloch,ChenLeFloch2}, as well as \eqref{ass:pressure}, for the precise assumptions on the pressure law.

The procedure that we undertake to establish the existence of solutions to the relativistic Euler equations \eqref{eq:relativistic-Euler}
is motivated by the works
for the classical Euler equations described above.
We construct a sequence of approximate solutions to the equations via a vanishing viscosity method
and pass the viscosity to zero.
As system \eqref{eq:relativistic-Euler} admits an invariant region,
we obtain the uniform bounds in $L^\infty$ of the approximate solutions.
Passing to a weak-star limit in $L^\infty$, we then associate a Young measure $\{\nu_{t,x}\}$
to the sequence,
characterizing the weak convergence.
As is well known, such a weak convergence is insufficient to pass to a limit in
the nonlinear terms of the equations, and hence we apply the compensated compactness argument
to improve this convergence.

Applying the method of compensated compactness
with the uniform estimates of the approximate solutions,
we deduce the Tartar commutation relation:
$$
\langle \nu_{t,x}, \eta_1 q_2-\eta_2 q_1\rangle
=\langle \nu_{t,x}, \eta_1\rangle \langle \nu_{t,x},q_2\rangle-\langle \nu_{t,x},\eta_2\rangle \langle \nu_{t,x},q_1\rangle
$$
for all weak entropy pairs $(\eta_1,q_1)$ and $(\eta_2,q_2)$ defined in \S 3 below.
We then show that this relation is sufficient to argue that the support of the probability measure $\nu_{t,x}$ reduces
to a single point and hence deduce the strong convergence of the approximate solutions
{\it a.e.} and in $L^p_{\loc}$.

To complete this reduction argument, we require a thorough understanding of the entropy pairs
for system \eqref{eq:relativistic-Euler}.
To this end, we establish the existence of fundamental kernels generating the admissible entropy pairs.
In order to do this, we make an \it ansatz \rm for the leading order behavior of the entropy kernels
close to the vacuum and take asymptotic expansions around the leading order terms.
This leaves us with an equation for the remainder that is then solved via a fixed point argument.
We establish estimates on both the leading terms and the remainder to demonstrate their respective
regularity properties.
With the obtained expansions, we analyze the singularities of the kernels and exploit properties
of cancellation of singularities in the commutation relation to conclude our arguments.
As a by-product, we also obtain the compactness of the solution operator in $L^p_{\loc}$.

Finally, we exploit the relationship of the relativistic entropy kernels to the classical entropy kernels
to demonstrate the convergence of the Newtonian limit.
Applying the compactness framework developed for the classical Euler equations in \cite{ChenLeFloch,ChenLeFloch2},
we gain the strong convergence of the relativistic solutions to the classical solutions of the Euler equations.

The structure of the paper is as follows:
In \S \ref{sec:relativisticproperties},
we analyze some basic properties of the relativistic Euler equations.
We then introduce the definitions of the entropy and entropy-flux kernels,
and state our main theorems concerning the existence and regularity of these kernels in \S \ref{sec:relativisticentropy}.
The proofs of these theorems are provided in \S \ref{sec:entropykernel}--\S\ref{sec:entropyfluxkernel}.
Moreover, these sections provide a detailed analysis of the asymptotics of the kernels as the density
approaches the vacuum state ({\it i.e.}, $\rho\to0$), as well as the singularities of their derivatives.
After this, in \S \ref{sec:relativisticcompactness},
we establish a compactness framework for approximate or exact solutions of
both systems \eqref{eq:relativistic-Euler} and \eqref{eq:relativistic-Euler-alternative}
via
a careful analysis of the Tartar commutation relation
for the relativistic entropies constructed from the kernels, established in \S\ref{sec:relativisticentropy}.
This analysis exploits the properties of cancellation of the singularities in the entropy and entropy-flux kernels,
relying on the expansions established in the earlier sections.
In \S\ref{sec:relativisticartificial}, we outline the construction of the artificial viscosity solutions
and demonstrate that they satisfy the compactness framework.
This allows us to conclude the first main theorem, Theorem \ref{thm:relativistic-main},
as well as Theorem \ref{thm:alternativesystem}, in \S\ref{sec:mainresultproof}.
Finally, in \S\ref{sec:Newtonianlimit}, we prove our second main theorem, Theorem \ref{thm:Newtonianlimit},
concerning the Newtonian limit of a sequence of solutions of the relativistic Euler equations
to the classical Euler equations.

\section{Basic Properties}\label{sec:relativisticproperties}
In this section, we analyze some basic properties of system \eqref{eq:relativistic-Euler}.
Writing $U(\rho,u)$ as in \eqref{eq:conservedvariables} for the conserved variables, we calculate
\begin{equation} \label{eq:gradF}
\nabla_U F(U) = \nabla_{(\rho,u)}F(U)(\nabla_{(\rho,u)}U)^{-1}
= \begin{pmatrix}
\frac{\eps (u^2 - p')u}{1 - \eps^2 p'u^2} & \frac{n(1-\eps u^2)^{\frac{3}{2}}}{(\rho + \eps p)(1-\eps^2 p'u^2)} \\[2mm]
\frac{(p' - u^2)(\rho + \eps p)\sqrt{1 - \eps u^2}}{n(1-\eps^2 p'u^2)} & \frac{(2 - \eps p' - \eps u^2)u}{1 -\eps^2 p'u^2}
\end{pmatrix},
\end{equation}
where $\nabla_U$ and $\nabla_{(\rho,u)}$ denote the gradients
in variables $U$ and $(\rho,u)$, respectively.
Then the eigenvalues of $\nabla_U F(U)$ are
\begin{equation*}
\lambda_- = \frac{u - \sqrt{p'(\rho)}}{1 - \eps u\sqrt{p'(\rho)}}, \quad
\lambda_+ = \frac{u + \sqrt{p'(\rho)}}{1 + \eps u\sqrt{p'(\rho)}},
\end{equation*}
and the corresponding eigenvectors are
\begin{align*}
 r_-=\begin{pmatrix}
       1\\[1mm]
       \frac{(\rho+\eps p)(u-\sqrt{p'})}{n\sqrt{1-\eps u^2}}
      \end{pmatrix},\quad
 r_+=\begin{pmatrix}
      1\\[1mm]
      \frac{(\rho+\eps p)(u+\sqrt{p'})}{n\sqrt{1-\eps u^2}}
     \end{pmatrix}.
\end{align*}
The sound speed in the fluid is given by $c_{\rm s}(\rho)=\sqrt{p'(\rho)}$, and we henceforth assume that
$$
\sqrt{p'(\rho)}\leq c=\frac{1}{\sqrt{\eps}}.
$$
As defined earlier, $\rho_{\max}^\eps$ is such that $c_{\rm s}(\rho_{\max}^\eps)=\frac{1}{\sqrt{\eps}}$
if a finite $\rho_{\max}^\eps$ exists,
or $\infty$ otherwise.
Then, in the region:
$$
\{|u|< \frac{1}{\sqrt{\eps}},\, 0<\rho<\rho_{\max}^\eps\},
$$
we see that $\lambda_+ - \lambda_- > 0$ so that the system is strictly hyperbolic.

The Riemann invariants of the system are
$$
w := v + k,\quad z:= v - k,
$$
where
\begin{equation}\label{eq:v(u)}
v = v(u) := \frac{1}{2\sqrt{\eps}}\log(\frac{1 + \sqrt{\eps}u}{1-\sqrt{\eps}u})
\end{equation}
and
\begin{equation}
k = k(\rho) := \int_{0}^{\rho}\frac{\sqrt{p'(s)}}{s + \eps p(s)}\dd s.
\end{equation}
Note that the mapping: $u \mapsto v(u)$ is a smooth, increasing bijection
from $(-\frac{1}{\sqrt{\eps}},\frac{1}{\sqrt{\eps}})$ to $\mathbb{R}$, and that
$\rho \mapsto k(\rho)$ is a smooth, increasing bijection from $(0,\rho_{\max}^\eps)$ onto its image.
For the inverse of $v$, we write $u$ as
\beq\label{eq:u(v)}
u(v):=\frac{1}{\sqrt{\eps}}\tanh(\sqrt{\eps} v).
\eeq
As mentioned earlier, to close the system, we impose an equation of state, {\it i.e.}, a general pressure law,
which satisfies conditions \eqref{ass:stricthyperbolicity}--\eqref{ass:genuine-nonlinearity} for $\rho>0$
and \eqref{ass:pressure} close to the vacuum.

We compare the nonlinear function $k(\rho)$ to the equivalent function for the classical Euler equations
equipped with a gamma-law pressure ({\it cf.}~\cite{ChenLeFloch}), for which $k(\rho)=\rho^\th$ with $\theta=\frac{\gamma-1}{2}$.
With assumption \eqref{ass:pressure} on the pressure,
we observe the following behavior of $k(\rho)$ near the vacuum.
For ease of reference, we state this as a lemma.

\begin{lemma}\label{lemma:k-asymptotics}
As $\rho\to0$, the nonlinear function $k(\rho)$ and its first derivative obey the following asymptotics{\rm :}
\beqa\label{eq:k-asymptotics}
&k(\rho) = \rho^{\theta} + O(\rho^{3\theta}) \qquad \text{ as $\rho\rightarrow0$,}\\
&k'(\rho) = \frac{\sqrt{p'(\rho)}}{\rho + \eps p(\rho)} =\theta\rho^{\theta-1}+O(\rho^{3\theta-1}) \qquad \text{ as $\rho\rightarrow0$.}
\eeqa
Moreover, its derivatives $k^{(n)}(\rho)$, for $n= 2,3$, can be expanded as
\beqas
k''(\rho) = \theta(\theta-1)\rho^{\theta-2}+O(\rho^{3\theta-2}),
\,\,\,\, k^{(3)}(\rho) = \theta(\theta-1)(\th-2)\rho^{\theta-3}+O(\rho^{3\theta-3}) \qquad \mbox{as $\rho\to 0$}.
\eeqas
\end{lemma}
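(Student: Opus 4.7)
The plan is to expand $k'(\rho)=\sqrt{p'(\rho)}/(\rho+\eps p(\rho))$ in powers of $\rho$ near zero, keeping quantitative control on all derivatives of the remainder, and then obtain the statements for $k$, $k''$, $k^{(3)}$ by integration and differentiation respectively.

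First I would extract the leading behavior of $p'$. Differentiating \eqref{ass:pressure} and using the pressure law normalization $\kappa=(\gamma-1)^2/(4\gamma)$ (so that $\sqrt{\kappa\gamma}=\theta$), I write
$$
p'(\rho)=\kappa\gamma\rho^{\gamma-1}\bigl(1+Q(\rho)\bigr),\qquad Q(\rho):=P(\rho)+\tfrac{\rho P'(\rho)}{\gamma}.
$$
The bounds $|P^{(n)}|\leq C\rho^{\gamma-1-n}$ for $0\le n\le 4$ immediately give $|Q^{(n)}(\rho)|\le C\rho^{2\theta-n}$ for $0\le n\le 3$, since $\gamma-1=2\theta$. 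Taking the square root and using the Faà~di~Bruno formula for the smooth function $x\mapsto\sqrt{1+x}$ applied to the small quantity $Q(\rho)$, I obtain
$$
\sqrt{p'(\rho)}=\theta\rho^{\theta}\bigl(1+R(\rho)\bigr),\qquad |R^{(n)}(\rho)|\le C\rho^{2\theta-n}\ \ (0\le n\le 3).
$$
Analogously, $\rho+\eps p(\rho)=\rho\bigl(1+\eps\kappa\rho^{\gamma-1}(1+P(\rho))\bigr)$, so expanding $(1+x)^{-1}$ yields
$$
\frac{1}{\rho+\eps p(\rho)}=\frac{1}{\rho}\bigl(1+S(\rho)\bigr),\qquad |S^{(n)}(\rho)|\le C\rho^{2\theta-n}\ \ (0\le n\le 3).
$$

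Next, multiplying the two expansions and applying the Leibniz rule for $T:=R+S+RS$, I obtain
$$
k'(\rho)=\theta\rho^{\theta-1}\bigl(1+T(\rho)\bigr),\qquad |T^{(n)}(\rho)|\le C\rho^{2\theta-n}\ \ (0\le n\le 2).
$$
Integrating from $0$ to $\rho$, the leading term contributes $\rho^{\theta}$ while the remainder $\theta\rho^{\theta-1}T(\rho)=O(\rho^{3\theta-1})$ is integrable (as $3\theta>0$) and gives the $O(\rho^{3\theta})$ correction of the first identity; this proves the expansion of $k(\rho)$ and of $k'(\rho)$ simultaneously. For the higher derivatives, I differentiate the formula for $k'$:
$$
k''(\rho)=\theta(\theta-1)\rho^{\theta-2}\bigl(1+T(\rho)\bigr)+\theta\rho^{\theta-1}T'(\rho),
$$
and both correction terms are $O(\rho^{3\theta-2})$ by the bounds on $T,T'$. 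An identical computation for $k^{(3)}$, using the control of $T''$, yields the claimed $O(\rho^{3\theta-3})$ remainder.

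I do not expect any essential difficulty: the only technical point is the bookkeeping of derivative bounds through the square root and the reciprocal, which reduces to applying Faà~di~Bruno to the analytic maps $x\mapsto\sqrt{1+x}$ and $x\mapsto(1+x)^{-1}$ to the small quantities $Q(\rho)$ and $\eps p(\rho)/\rho$ respectively. The constants depend on $\eps$, but since we need the asymptotics only for a fixed $\eps>0$ in the argument for a single system, this is harmless for the statement of the lemma.
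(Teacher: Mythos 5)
Your proof is correct and constitutes the natural elementary argument; the paper actually omits the proof of this lemma entirely, treating it as a direct consequence of assumption \eqref{ass:pressure} and the identity $\kappa\gamma=\theta^2$ (so that $\sqrt{\kappa\gamma}=\theta$), which is precisely what you exploit. One small remark on your closing caveat: the constants in the lemma do not in fact degenerate as $\eps\to 0$ — your factor $S(\rho)$ is $O(\eps\rho^{2\theta})$ with a constant that \emph{decreases} with $\eps$, and $Q,R$ are $\eps$-independent, so the constants in \eqref{eq:k-asymptotics} can be taken uniform for $\eps\in(0,\eps_0]$. This uniformity matters elsewhere in the paper (e.g.\ in the $L^\infty$ bounds of Theorem \ref{thm:relativistic-main} and in the Newtonian limit argument of \S\ref{sec:Newtonianlimit}), so it is worth noting that it comes for free from your computation rather than dismissing the $\eps$-dependence.
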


We define another exponent: $\lambda = \frac{3-\gamma}{2(\gamma -1)}>0$  for the use in the next section.
Note that $\la$ is related to $\th$ by the relation: $2\lambda\theta = 1-\theta$.

An analysis of system \eqref{eq:relativistic-Euler-alternative} shows that it has also the same eigenvalues $\la_-$ and $\la_+$,
and the same Riemann invariants
$w=v(u)+k(\rho)$ and $z=v(u)-k(\rho)$,  as defined above.

\section{Entropy Pairs and Entropy Solutions}\label{sec:relativisticentropy}
In order to analyze the limit of our approximate solutions of system \eqref{eq:relativistic-Euler}
and prove the strong convergence of the sequence,
we first need to understand the structure and behavior of entropy pairs of the system.
Therefore, the purpose of this section is to provide the basis and framework for this analysis.

\begin{defn}\label{def:relativistic-entropy-pair}
An \textit{entropy pair} $(\eta,q)$ for system \eqref{eq:relativistic-Euler}
is a pair of $C^1$ entropy and entropy-flux functions satisfying the relation:
$$
\nabla\eta(U)\nabla F(U) =\nabla q(U).
$$
A \textit{weak entropy} $\eta$ is an entropy that vanishes at the vacuum state: $\eta|_{\rho = 0}=0$.
\end{defn}

We observe that an equivalent characterization of the entropy pair in the Riemann invariant coordinates $(w,z)$ is given by
\beq
q_w=\la_+\,\eta_w,\qquad q_z=\la_-\,\eta_z.
\eeq
In particular, as the eigenvalues and Riemann invariants of the two systems \eqref{eq:relativistic-Euler}
and \eqref{eq:relativistic-Euler-alternative} coincide, we deduce that the two systems share the same entropy and entropy-flux functions.
We therefore restrict our attention  to system \eqref{eq:relativistic-Euler} in the sequel.

\begin{defn}\label{def:relativistic-entropy-solution}
 A pair of bounded, measurable functions $(\rho,u)$ such that
 $$
 |u| < c=\frac{1}{\sqrt{\eps}}, \quad 0\leq\rho<\rho_{\max}^\eps
 $$
 is an \it entropy solution \rm of the Cauchy problem \eqref{eq:relativistic-Euler}--\eqref{Cauchy-problem}
 provided that
 \begin{enumerate}
 \item[\rm (i)] For any $\phi \in C_{\rm c}^1(\mathbb{R}_+^2)$,
 \begin{align*}
 &\iint_{\R_+^2}
  \Big( \frac{n}{\sqrt{1-\eps u^2}}\,\phi_t + \frac{nu}{\sqrt{1-\eps u^2}}\,\phi_x\Big)\dd x\,\dd t
  + \int_\R \frac{n_0}{\sqrt{1-\eps u_0^2}}\,\phi(0,x)\,\dd x =0,\\[2mm]
 &\iint_{\R_+^2} \Big(\frac{(\rho + \eps p)u}{1-\eps u^2}\,\phi_t
   + \big(\frac{(\rho + \eps p)u^2}{1-\eps u^2}+p(\rho)\big)\phi_x\Big)\dd x\,\dd t
    + \int_\R \frac{(\rho_0 + \eps p(\rho_0))u_0}{1-\eps u_0^2}\,\phi(0,x)\,\dd x =0;
 \end{align*}

\medskip
 \item[\rm (ii)] For any nonnegative function $\phi \in C_{\rm c}^1(\mathbb{R}_+^2)$
  and $C^1$ weak entropy pair $(\eta, q)(\rho,u)$ with $\eta$ convex with respect to $U$,
$$
\iint_{\R_+^2}\big(\eta(\rho,u)\partial_t\phi + q(\rho,u)\partial_x\phi\big)\,\dd x\,\dd t
+ \int_\R\eta(\rho_0, u_0)\phi(0,x)\,\dd x \geq 0,
$$
where $(\eta, q)(\rho,u):=(\eta, q)(U(\rho, u))$.
 \end{enumerate}
\end{defn}

An explicit entropy pair is given by
\beq\label{def:physicalentropy}
\eta^*(U(\rho,u))=\frac{\rho+\eps^2 pu^2}{1-\eps u^2},\qquad q^*(U(\rho,u))=\frac{(\rho+\eps p)u}{1-\eps u^2}.
\eeq
Then
\begin{equation}
 \nabla^2\eta^*(U)
 =\alpha_0(\rho,u)
 \begin{pmatrix}
 \frac{\eps(\rho+\eps p)(p'+u^2+2\eps p' u^2)}{n(1-\eps u^2)} & -\frac{\eps(1+\eps p')u}{\sqrt{1-\eps u^2}}\\[2mm]
 -\frac{\eps(1+\eps p')u}{\sqrt{1-\eps u^2}} & \frac{\eps n}{\rho+\eps p}
\end{pmatrix}
 \end{equation}
with $\alpha_0(\rho,u)=\frac{(1-\eps u^2)^2}{n(1-\eps^2 p'u^2)}>0$. In particular, $\eta^*(U)$ is a convex entropy.

We remark that the entropy pair \eqref{def:physicalentropy} is actually the first conservation law in the
alternative system \eqref{eq:relativistic-Euler-alternative}.

We begin our analysis of the entropy functions of \eqref{eq:relativistic-Euler} by constructing a fundamental solution of the \it entropy equation\rm.

\subsection{Entropy Equation}
Let $(\eta,q)$ be an entropy pair. Then it follows from \eqref{eq:gradF} and Definition \ref{def:relativistic-entropy-pair} that
\begin{align}
  q_\rho &=\eta_\rho\frac{u(1-\eps p')}{1-\eps^2 p'u^2} + \eta_u\frac{(1-\eps u^2)^2p'}{(\rho+\eps p)(1-\eps^2 p'u^2)},\\[1mm]
  q_u&=\eta_\rho\frac{\rho + \eps p}{1-\eps^2 p'u^2} + \eta_u\frac{u(1-\eps p')}{1-\eps^2 p'u^2}.
\end{align}
Eliminating $q$ and changing the coordinate: $u \mapsto v$, as in \eqref{eq:v(u)}, yield
\begin{equation}\label{eq:A,Bintro}
\eta_{\rho \rho} - k'(\rho)^2\eta_{vv} + \eps A(\rho,v)\eta_{\rho} + \eps B(\rho,v)v\eta_v = 0,
\end{equation}
where
\begin{align*}
&A(\rho,v)=\frac{2p'(\rho)}{\rho + \eps p(\rho)}\frac{1-\eps u^2\big(1-\frac{p''(\rho)(\rho+\eps p)}{2p'(\rho)}\big)}{1-\eps^2 p'(\rho)u^2}, \\[1mm]
&B(\rho,v)=\frac{2up'(\rho)\big(1-\eps p'(\rho)-\frac{p''(\rho)(\rho+\eps p(\rho))}{2p'(\rho)}\big)}{v(u)(\rho+\eps p(\rho))^2(1-\eps^2 p'(\rho)u^2)}.
\end{align*}
To simplify notation, we use the operator:
\beq
\mathbf{L}:=\partial_{\rho\rho}-k'(\rho)^2\partial_{vv}+\eps A(\rho,v)\partial_\rho+\eps B(\rho,v)v\partial_v.
\eeq

\begin{defn}
The \textit{entropy kernel} $\chi=\chi(\rho,v,s)$ is the unique solution of the equation:
\begin{equation}
\begin{cases} \label{eq:entropyequation}
\mathbf{L}\chi=\chi_{\rho \rho} - k'(\rho)^2\chi_{vv} + \eps A(\rho,v)\chi_{\rho} + \eps B(\rho,v)v\chi_v = 0, \\[1mm]
\chi\rvert_{\rho=0} = 0, \\[1mm]
\chi_{\rho}\rvert_{\rho = 0} = \delta_{v=s}\qquad \mbox{for $s\in\R$}.
\end{cases}
\end{equation}

\end{defn}
We recall that \eqref{eq:relativistic-Euler} is invariant under the Lorentz
transformations:
$$
(t,x)\to
(t', x')=(\frac{t-\eps \tau x}{\sqrt{1-\eps \tau^2}},\frac{x-\tau t}{\sqrt{1-\eps \tau^2}}) \qquad \mbox{for $|\tau|<\frac{1}{\sqrt{\eps}}$}.
$$
Under this transformation, velocity $u$ and the associated function $v$ also transform as:
$$
u'=\frac{u-\tau}{1-\eps \tau u}, \quad v':=v(u')=v(u)-v(\tau).
$$
By the invariance of the equations under these transformations, the entropy equation
is also  invariant under such Lorentz shifts.
Thus, for $s=v(\tau)$,
$$
\chi(\rho,v,s)=\chi(\rho,v-s,0)=\chi(\rho,0,s-v),
$$
so that it suffices to solve $\chi$ in the case that $s=0$.
We therefore write $\chi(\rho,v-s)=\chi(\rho,v,s)$ henceforth in a slight abuse of notation.

The kernel provides a representation formula for weak entropies of system \eqref{eq:relativistic-Euler}.
That is, any weak entropy function can be represented by convolution with a test function $\psi(s)$ as
\beqs
\eta^\psi(\rho,u)=\int_\R\chi(\rho,v(u)-s) \psi(s)\,\dd s.
\eeqs
Before we continue, it is worth making an aside at this point to compare the situation to the classical Euler
equations \eqref{eq:classical-Euler}.
For system \eqref{eq:classical-Euler}, the entropy equation is the simpler equation:
\beq\label{eq:classicalentropyeq}
\chi^*_{\rho\rho}-k'(\rho)^2\chi^*_{vv}=0.
\eeq
For the gamma-law gas, $k'(\rho)=\th\rho^{\th-1}$, and \eqref{eq:classicalentropyeq} has the fundamental solution:
\beq\label{eq:chistar}
\chi^*(\rho,v)=M_\la [\rho^{2\th}-v^2]_+^\la,
\eeq
where $\la>0$ is defined as in \S \ref{sec:relativisticproperties} and $M_\la>0$ is a constant depending only on $\la$.

With this as a motivation, we make an ansatz for the entropy kernel of system \eqref{eq:relativistic-Euler} in the form:
\begin{equation}\label{eq:relativisticentropyexpansion}
\chi(\rho,v) = a_1(\rho)[k(\rho)^2 - v^2]_+^{\lambda} + a_2(\rho)[k(\rho)^2 - v^2]_+^{\lambda+1} + g(\rho,v).
\end{equation}
By the principle of finite propagation speed,
we expect the remainder function $g(\rho,v)$ to have the same support as the first two terms.

In anticipation of the next theorem, we recall the definition of fractional derivatives:
For a function $f=f(s)$ of compact support, the fractional derivative of order $\mu>0$ is
\beq\label{def:fractionalderivative}
\partial_s^\mu f=\Gamma(-\mu)f* [s]_+^{-\mu-1},
\eeq
where $\Gamma(\cdot)$ is the Gamma function.

Henceforth, we suppose that the density has a fixed upper bound: $\rho\leq\rho_M$ so that $\rho_M<\rho_{\max}^\eps$.
The universal constant $C>0$ is independent of $\rho$, but may depend on $\rho_{M}$.

\begin{thm}[Relativistic Entropy Kernel]\label{thm:entropykernelmain}
The entropy kernel admits the expansion
\begin{equation}\label{eq:kernelexpansion}
\chi(\rho,v) = a_1(\rho)[k(\rho)^2 - v^2]_+^{\lambda} + a_2(\rho)[k(\rho)^2 - v^2]_+^{\lambda+1} + g(\rho,v),
\end{equation}
where the coefficients $a_1(\rho)$ and $a_2(\rho)$ are such that, when $0\leq\rho\leq\rho_M$,
\begin{equation}
a_1(\rho) = c_{*,\lambda}k(\rho)^{-\lambda}k'(\rho)^{-\frac{1}{2}}e^{\tilde{a}(\rho)}>0,
\end{equation}
 and
\beq
a_1(\rho)+|a_2(\rho)|\leq C,
\eeq
with
\begin{equation} \label{3.14a}
\tilde{a}(\rho)=\frac{\eps}{2}\int_0^\rho\big(-A(s, k(s))+\frac{k(s)}{k'(s)}B(s, k(s))\big)\dd s
\end{equation}
as defined in \eqref{4.1a} below,
and
$c_{*,\la}>0$ being a constant depending only on $\lambda$.
Moreover, the remainder function $g(\rho,v)$ and its derivatives $\partial^\mu_v g(\rho,v)$ are H\"{o}lder continuous for $0<\mu<\la+2$, and
satisfy that, for $0<\be<\mu$,
$$
|\partial_v^\be g(\rho,v)|\leq C\rho^{1+ (1-2\mu+\be)\th}[k(\rho)^2-v^2]_+^{\mu-\be}.
$$
\end{thm}

By definition, to each entropy function is associated a corresponding entropy-flux function.
These entropy-flux functions are generated by another kernel, the {\it entropy-flux kernel} $\sigma(\rho,v,s)$.
\begin{defn}
The entropy-flux kernel is defined by
\begin{equation}
\begin{cases} \label{eq:entropyfluxequation}
\mathbf{L}\sigma:=\sigma_{\rho \rho} - k'(\rho)^2\sigma_{vv} + \eps A(\rho,v)\sigma_{\rho} + \eps B(\rho,v)v\sigma_v = F(\rho,v),\\[1mm]
\sigma\rvert_{\rho=0} = 0, \\[1mm]
\sigma_{\rho}\rvert_{\rho = 0} = \frac{u(1-\eps p')}{1-\eps^2 p'u^2}\delta_{v=s},
\end{cases}
\end{equation}
where $F(\rho,v)$ is given explicitly later in \eqref{eq:fluxequationrighthandside}.
\end{defn}
The entropy function generated by the convolution of a test function $\psi(s)$ with the entropy kernel
has a corresponding entropy-flux given by
$$
q(\rho,v)=\int_\R\sigma(\rho,v,s)\psi(s)\,\dd s.
$$
As we have seen for the entropy equation, the equation in \eqref{eq:entropyfluxequation}
is invariant under the Lorentz transformation, but the initial conditions for the entropy flux kernel are not.
We therefore consider, instead of $\sigma$, the difference $\sigma-\frac{u(1-\eps p')}{1-\eps^2 p'u^2}\chi$.
Writing $\tilde{u}(\rho,v)=\frac{u(1-\eps p')}{1-\eps^2 p'u^2}$,
this difference satisfies the following initial value problem:
\begin{equation}
\begin{cases} \label{eq:modifiedentropyfluxequation}
\mathbf{L}(\sigma-\tilde{u}\chi)= \tilde{F}(\rho,v),\\
(\sigma-\tilde{u}\chi)\rvert_{\rho=0} = 0,\\
(\sigma-\tilde{u}\chi)_{\rho}\rvert_{\rho = 0} =0,
\end{cases}
\end{equation}
with $\tilde{F}(\rho,v)$ defined by
$$
\tilde{F}(\rho,v):=F(\rho,v)-\mathbf{L}(\tilde{u}\chi).
$$
Then problem \eqref{eq:modifiedentropyfluxequation} is Lorentzian invariant so that
$$
(\sigma-\tilde{u}\chi)(\rho,v,s)=(\sigma-\tilde{u}\chi)(\rho,v-s,0)=(\sigma-\tilde{u}\chi)(\rho,0,s-v).
$$
Therefore, it suffices to solve $\sigma-\tilde{u}\chi$ for the case: $s=0$.

\begin{thm}[Relativistic Entropy-Flux Kernel]\label{thm:entropyfluxkernelmain}
The entropy-flux kernel admits the expansion{\rm :}
\begin{equation}\label{eq:relativisticentropyfluxexpansion}
(\sigma-\tilde{u}\chi)(\rho,v) = -v\big(b_1(\rho)[k(\rho)^2 - v^2]_+^{\lambda} + b_2(\rho)[k(\rho)^2 - v^2]_+^{\lambda+1}\big) + h(\rho,v),
\end{equation}
where coefficients $b_1(\rho)$ and $b_2(\rho)$ satisfy that, for $0\leq\rho\leq\rho_M$,
\beqa
b_1(\rho)>0,\quad b_1(\rho)+|b_2(\rho)|\leq C.
\eeqa
Moreover, the remainder function $h(\rho,v)$ and its derivatives $\partial^\mu_v h(\rho,v)$ are H\"{o}lder continuous for $0<\mu<\la+2$ and satisfy that,
for $0<\be<\mu$,
$$
|\partial_v^\be h(\rho,v)|\leq C\rho^{1+(1-2\mu+\be)\th}[k(\rho)^2-v^2]_+^{\mu-\be}.
$$
\end{thm}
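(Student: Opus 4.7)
The plan is to mirror the strategy used for Theorem \ref{thm:entropykernelmain} on the entropy kernel, applied this time to the inhomogeneous initial-value problem \eqref{eq:modifiedentropyfluxequation} satisfied by $\sigma-\tilde{u}\chi$. The first observation is a parity argument: since $u(v)$ is odd in $v$ by \eqref{eq:u(v)}, $\tilde{u}(\rho,v)$ is odd in $v$, and the kernel $\chi(\rho,v,0)$ is even in $v$ by the evenness of the coefficients in $\mathbf{L}$ under $v\mapsto -v$ together with the symmetry of the initial datum $\delta_{v=0}$. Hence $\sigma-\tilde{u}\chi$ is odd in $v$, which is exactly what the ansatz \eqref{eq:relativisticentropyfluxexpansion} encodes through the outer factor $-v$ and the remainder $h$. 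Throughout, the expansion of $\chi$ from Theorem \ref{thm:entropykernelmain} is used in full, as is the identity $2\lambda\theta=1-\theta$ relating the singular exponents to the weights in $\rho$ near the vacuum.

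First I would substitute the ansatz into $\mathbf{L}$ and expand systematically in powers of $[k(\rho)^2-v^2]_+$. Matching the most singular term, of order $[k^2-v^2]_+^{\lambda-1}$, along the characteristic $v=\pm k(\rho)$ yields a linear first-order transport ODE for $b_1(\rho)$ of the same structural type as the ODE solved by $a_1$ via \eqref{3.14a}; this ODE determines $b_1$ as a positive exponential, with the initial constant fixed by the initial condition $(\sigma-\tilde{u}\chi)_\rho|_{\rho=0}=0$ together with the asymptotic behavior of $\tilde F$ near $\rho=0$. Matching the next-order terms $[k^2-v^2]_+^{\lambda}$ gives a linear equation that determines $b_2(\rho)$ and bounds it uniformly on $[0,\rho_M]$.

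Second I would compute $\tilde{F}(\rho,v)=F(\rho,v)-\mathbf{L}(\tilde{u}\chi)$, splitting $\chi$ according to the three pieces in Theorem \ref{thm:entropykernelmain}. Each piece contributes terms whose leading singularities on the sonic curve $v=\pm k(\rho)$ are absorbed by the matching conditions for $b_1$ and $b_2$, so that what remains is a source $R(\rho,v)$ whose singular strength is no worse than $[k^2-v^2]_+^{\lambda+1}$, i.e.\ strictly milder than the leading ansatz. The sharp bounds on $g$ and $\partial_v^\beta g$ from Theorem \ref{thm:entropykernelmain} are used here to control both the vacuum-weight $\rho^{1+\theta-2\mu\theta+\beta\theta}$ and the Hölder exponents that will later be transferred to $h$.

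Finally I would close the argument by a fixed-point construction for $h$, exactly as in the entropy-kernel case. Using the fundamental solution of $\mathbf{L}$ developed in Theorem \ref{thm:entropykernelmain}, I write $h$ as a Duhamel integral against $R$ together with the commutator errors produced by the ansatz, then iterate in a Banach space weighted by suitable powers of $[k(\rho)^2-v^2]_+$ and of $\rho$ that encode the required fractional-derivative bounds of order $0<\mu<\lambda+2$. The contraction estimate is linear in the $\rho$-variable, so a short-interval fixed point in $\rho$ followed by continuation up to $\rho=\rho_M$ yields $h$, its Hölder regularity, and the pointwise bound $|\partial_v^\beta h|\le C\rho^{1+\theta-2\mu\theta+\beta\theta}[k^2-v^2]_+^{\mu-\beta}$.

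The principal obstacle will be the source $\mathbf{L}(\tilde{u}\chi)$: because $\tilde{u}$ depends nontrivially on both $\rho$ and $v$ through the $\eps$-corrected factor $\frac{1-\eps p'}{1-p'\eps^2 u^2}$, differentiating $\tilde{u}\chi$ twice in $v$ and once in $\rho$ produces a cascade of terms that are individually of singular order $[k^2-v^2]_+^{\lambda-1}$ or worse. The cancellation of these terms against the transport equations for $b_1$ and $b_2$ is delicate, and verifying that $R$ ends up at the desired order $\lambda+1$ requires an explicit bookkeeping of the leading and subleading coefficients of $A(\rho,v)$, $B(\rho,v)$, and of $\tilde{u}$ at $v=\pm k(\rho)$, together with the vacuum expansion of Lemma \ref{lemma:k-asymptotics}. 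This cancellation step is the technical heart of the proof.
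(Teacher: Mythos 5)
Your proposal follows essentially the same route as the paper: a parity observation justifying the odd ansatz, determination of $b_1,b_2$ by matching the singular orders $[k^2-v^2]_+^{\lambda-1}$ and $[k^2-v^2]_+^{\lambda}$ after expanding $\tilde F(\rho,v)=\mathbf{L}(\sigma-\tilde u\chi)$ and exploiting Lemma \ref{lemma:k-asymptotics} and Lemma \ref{lemma:utilderhotildeexpansions}, and a Duhamel fixed-point iteration for the remainder $h$. Two small imprecisions worth noting: (i) the closed-form ``fundamental solution'' is not for $\mathbf{L}$ itself but for the frozen-coefficient operator $\tilde{\mathbf{L}}$ in Fourier space, with Bessel-function solutions; the $v$-dependent parts of $A,B,\beta$ must be fed back into the iteration source (your ``commutator errors,'' which the paper packages into $\mathcal{T}(h)$), and (ii) the iteration in the paper converges on all of $[0,\rho_M]$ at once because each step gains a factor $\rho^{2\theta}/(2j\theta)$, so no short-interval-plus-continuation argument is needed. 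Neither point changes the substance of your argument.
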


\section{The Weak Entropy Kernel}\label{sec:entropykernel}

This section is devoted to the proof of Theorem \ref{thm:entropykernelmain}
to show the existence of the entropy kernel and examine its regularity properties.

\subsection{Roadmap for the construction of the entropy kernel}
In order to aid the comprehension of the reader, we first provide a summary of the structure of the proof and construction
of the entropy kernel contained in \S\ref{sec:entropykernelcoeffs}--\ref{sec:entropykernelproofs}.
A similar structure holds for the construction of the entropy flux kernel in \S\ref{sec:entropyfluxkernel},
as given in Theorem \ref{thm:entropyfluxkernelmain}.

We begin by making an ansatz of form \eqref{eq:kernelexpansion} for the structure of the entropy kernel $\chi(\rho,v)$.
Substituting this ansatz into the equation in \eqref{eq:entropyequation},
we identify the most singular terms resulting from this expansion.
Enforcing the cancellation of these terms,
we derive an ordinary differential equation (ODE) for the first coefficient $a_1(\rho)$,
the solution for which is given in \eqref{def:a1}.
This solution is normalized in order to satisfy the initial condition $\chi_\rho\big|_{\rho=0}=\de_{v=0}$.
Arguing similarly for the next most singular terms (using the chosen function $a_1(\rho)$),
we obtain a further ODE for $a_2(\rho)$, the solution for which is given in \eqref{eq:a_2coefficient}.

The next step is to derive a suitable equation for the remainder term $g(\rho,v)$.
To do this, we identify an operator $\widetilde{\mathbf{L}}$ capturing the principal part of
equation \eqref{eq:entropyequation} but with coefficients independent of $v$.
Employing the Fourier transform with respect to $v$,
we obtain a formal equation for the remainder term $\widehat{g}(\rho,\xi)$ in Proposition \ref{prop:Pgexpression}.
A key observation is that the choice of $a_1(\rho)$ and $a_2(\rho)$ to cancel the leading order singularities
is manifested on the Fourier side by the cancellation of the most slowly decaying terms in the equation
for $\widehat{g}(\rho,\xi)$ as $\xi\to\pm\infty$.
This leads to the higher order H\"older regularity of $g(\rho,v)$ so that $g(\rho,v)$ can be treated as a true remainder term.
Using the fundamental solutions of operator $\widetilde{\mathbf{L}}$ in Lemma \ref{lemma:Ltildefundamental},
we may employ the variation of parameters to find a formal representation formula for $\widehat{g}(\rho,\xi)$; see \eqref{eq:ghatrep}.

In Theorem \ref{thm:entkernelexistence}, we give a proof of the existence of a solution $\widehat{g}(\rho,\xi)$
satisfying the representation formula via a constructive fixed point argument, before proving the higher regularity
of the obtained fixed point
in Theorem \ref{thm:entropykernelregularity}.
It is then straightforward to see that the obtained function $g(\rho,v)$ from the inverse Fourier transform satisfies the desired equation,
which leads to the existence of the entropy kernel claimed in Theorem \ref{thm:entropykernelmain}.

The final stage is to quantify the difference between the entropy kernel $\chi(\rho,v)$ and the Newtonian entropy
kernel $\chi^*(\rho,v)$ defined in \eqref{eq:chistar} and the following equation.
This is done in Theorem \ref{thm:chipositive}, again by using the variation of parameters formula in the Fourier space.

\subsection{The coefficients for the entropy kernel}\label{sec:entropykernelcoeffs}
As a preliminary observation, we note that, by the principle of finite speed of propagation,
coefficients $A$ and $B$ may be redefined to be $0$ outside the support of $\chi(\rho,v-s)$,
which is the set $\{|v-s|\leq k(\rho)\}$.
Moreover, as $A$ and $B$ are functions of $v^2$, a simple Taylor expansion around $v^2=k(\rho)^2$ gives

\begin{lemma} \label{lemma:A,Bdecomposition}
The coefficient functions $A$ and $B$ in equation \eqref{eq:entropyequation} for the entropy kernel can be written as:
\begin{align*}
&A(\rho,v) = A_0(\rho)\mathbb{1}_{|v|\leq k(\rho)} + \eps A_1(\rho)[k(\rho)^2 - v^2]_{+} + \eps^2 A_2(\rho,v)[k(\rho)^2 - v^2]_{+}^2, \\[1mm]
&B(\rho,v) = B_0(\rho)\mathbb{1}_{|v|\leq k(\rho)} + \eps B_1(\rho)[k(\rho)^2 - v^2]_{+} + \eps^2 B_2(\rho,v)[k(\rho)^2 - v^2]_{+}^2,
\end{align*}
with
\begin{align*}
|A_0(\rho)| + |A_1(\rho)| + |A_2(\rho,v)|
+\rho \big(|B_0(\rho)| + |B_1(\rho)| + |B_2(\rho,v)|\big) \leq C\rho^{\gamma - 2},
\end{align*}
and $\mathbb{1}_{|v|\leq k(\rho)}=1$ when $|v|\leq k(\rho)$ and $0$ when $|v|>k(\rho)$.
\end{lemma}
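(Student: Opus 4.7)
The idea is a double Taylor expansion of $A$ and $B$ in the variable $\eps v^2$ around the boundary $v^2=k(\rho)^2$ of the support of $\chi$, exploiting the fact that the $v$-dependence in both coefficients enters only through $\eps u^2$.

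The first step is to observe that, by finite propagation, $A$ and $B$ only matter on the set $\{|v|\leq k(\rho)\}$, so we multiply both by $\mathbb{1}_{|v|\leq k(\rho)}$. Next, since $u(v)=\eps^{-1/2}\tanh(\sqrt{\eps}v)$, the combination $\eps u^2=\tanh^2(\sqrt{\eps}v)$ is an even smooth function of $\sqrt{\eps}v$, and so can be written as $\psi_0(\eps v^2)$ for some smooth $\psi_0$ with $\psi_0(0)=0$, $\psi_0'(0)=1$. Similarly, $u/v=\tanh(\sqrt{\eps}v)/(\sqrt{\eps}v)$ is a smooth function $\psi_1(\eps v^2)$. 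Substituting into \eqref{eq:A,Bintro}, and using that the denominator $1-p'\eps^2 u^2$ is bounded below on $\{0\leq\rho\leq\rho_M\}\cap\{|v|\leq k(\rho)\}$ by a positive constant depending only on $\rho_M<\rho_{\max}^\eps$, I can write
\[
A(\rho,v)=\mathcal{A}(\rho,\eps v^2),\qquad B(\rho,v)=\psi_1(\eps v^2)\,\mathcal{B}(\rho,\eps v^2),
\]
where $\mathcal{A}$ and $\mathcal{B}$ are smooth in their second argument on the compact interval $[0,\eps k(\rho_M)^2]$.

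The second step is a twofold Taylor expansion in $y:=\eps v^2$ about $y_0:=\eps k(\rho)^2$. Writing
\[
\mathcal{A}(\rho,y)=\mathcal{A}(\rho,y_0)+(y-y_0)\,\partial_y\mathcal{A}(\rho,y_0)+(y-y_0)^2 R_A(\rho,y),
\]
with $R_A$ the standard integral remainder, and substituting $y-y_0=\eps(v^2-k(\rho)^2)=-\eps[k(\rho)^2-v^2]_+$ inside the support, immediately produces
\[
A(\rho,v)=A_0(\rho)+\eps A_1(\rho)[k(\rho)^2-v^2]_+ + \eps^2 A_2(\rho,v)[k(\rho)^2-v^2]_+^2,
\]
with $A_0(\rho)=\mathcal{A}(\rho,\eps k(\rho)^2)$, $A_1(\rho)=-\partial_y\mathcal{A}(\rho,\eps k(\rho)^2)$, and $A_2(\rho,v)=R_A(\rho,\eps v^2)$. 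The same scheme applied to $\psi_1\mathcal{B}$ gives the analogous decomposition of $B$.

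Finally, the pointwise bounds follow from the pressure hypothesis \eqref{ass:pressure}. Near the vacuum it yields $p'(\rho)/(\rho+\eps p(\rho))=O(\rho^{\gamma-2})$ and $p'(\rho)/(\rho+\eps p(\rho))^2=O(\rho^{\gamma-3})$, while the remaining building blocks $p''(\rho)(\rho+\eps p(\rho))/p'(\rho)$, $\eps p'$, and $\psi_0,\psi_1$ are uniformly bounded on the compact set under consideration. Since each $y$-derivative of $\mathcal{A}$ and $\mathcal{B}$ only brings down rational expressions in $\psi_0(y)$ whose denominators are bounded away from zero, every coefficient $A_i$ inherits the $O(\rho^{\gamma-2})$ bound from the prefactor $p'/(\rho+\eps p)$ appearing in $A$, and every $B_i$ inherits the $O(\rho^{\gamma-3})$ bound from the prefactor $p'/(\rho+\eps p)^2$ appearing in $B$; this gives the stated estimate. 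There is no conceptual obstacle here: the only genuine bookkeeping is to check that the factors of $\eps$ from the chain $y=\eps v^2$ land exactly where claimed, which is automatic from Taylor's theorem.
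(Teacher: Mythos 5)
Your proposal is correct and matches the paper's own (very terse) justification, which simply asserts that $A$ and $B$ are functions of $v^2$ and that the decomposition follows from "a simple Taylor expansion around $v^2=k(\rho)^2$." Your account fleshes this out correctly: the only $v$-dependence enters through $\eps u^2=\tanh^2(\sqrt{\eps}v)$ and $u/v=\tanh(\sqrt{\eps}v)/(\sqrt{\eps}v)$, both smooth functions of $\eps v^2$; a second-order Taylor expansion in $y=\eps v^2$ about $y_0=\eps k(\rho)^2$, restricted to the support $\{|v|\le k(\rho)\}$, yields the three-term decomposition with the powers of $\eps$ falling out of $y-y_0=-\eps[k(\rho)^2-v^2]_+$; and the $\rho$-bounds come from $p'/(\rho+\eps p)=O(\rho^{\gamma-2})$ and $p'/(\rho+\eps p)^2=O(\rho^{\gamma-3})$ under \eqref{ass:pressure}, together with boundedness of $p''(\rho+\eps p)/p'$, $\eps p'$, and the denominator $1-p'\eps^2u^2$ away from zero on $\{0\le\rho\le\rho_M,\ |v|\le k(\rho)\}$. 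No gaps.
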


With the ansatz for the entropy kernel,
we determine  coefficients $a_1(\rho)$ and $a_2(\rho)$.
To do this, we substitute ansatz \eqref{eq:relativisticentropyexpansion} into the entropy equation \eqref{eq:entropyequation}
and examine the most singular terms.
By choosing the coefficients such that these singular terms vanish,
we are able to solve the equation for the higher order remainder.

Denote
$$
G_\nu(\rho,v):=[k(\rho)^2-v^2]_+^\nu \qquad \mbox{for $\nu\in \R$}.
$$
Then we have the following identities:
$$
\partial_\rho G_\nu(\rho,v)=2\nu k(\rho)k'(\rho)G_{\nu-1}(\rho,v),
\quad \partial_v G_\nu(\rho,v)=-2\nu vG_{\nu-1}(\rho,v).
$$
Substituting \eqref{eq:relativisticentropyexpansion} into \eqref{eq:entropyequation} and grouping the terms yield:
\beqas
\chi_{\rho\rho}-k'(\rho)^2\chi&_{vv}+\eps A\chi_\rho+\eps Bv\chi_v\\
=G_{\la-1}(\rho,v)\big(&4\la kk'a_1'+4\la^2k'^2a_1+2\la kk''a_1+2\la\eps A_0 kk'a_1-2\la\eps B_0k^2a_1\big)\\
 +\, G_\la(\rho,v)\big(&a_1''+4(\la+1)kk'a_2'+4(\la+1)^2 k'^2a_2+2(\la+1)kk''a_2\\
&+\eps A_0a_1'+2\la\eps B_0a_1+2(\la+1)\eps A_0kk'a_2-2(\la+1)\eps B_0k^2a_2\\
& +2\la\eps^2 A_1kk'a_1-2\la\eps^2 B_1k^2a_1\big)\\
+\, G_{\la+1}(\rho,v&)\big(a_2''+2(\la+1)\eps B_0a_2+\eps(A-A_0)a_1'+2\la\eps^3A_2kk'a_1+\eps A_0a_2'\\
&\hspace{3.5mm}+2(\la+1)\eps(A-A_0)kk'a_2+2(\la+1)\eps B_0a_2+2\la\eps^2 B_1a_1\\
&\hspace{3.5mm}-2\la\eps^3 B_2v^2a_1-2(\la+1)\eps(B-B_0)v^2a_2\big)\\
 +\, g_{\rho\rho}-k'(\rho&)^2g_{vv}+\eps Ag_\rho+\eps Bvg_v.
\eeqas
Thus, we see that, in order to cancel the highest order singularities, $a_1$ must solve
\beqs
\frac{a_1'}{a_1} = -\frac{k''}{2k'} - \lambda \frac{k'}{k} - \frac{\eps}{2}A_0 + \frac{\eps}{2}\frac{k}{k'}B_0,
\eeqs
so that
\begin{equation}\label{def:a1}
a_1(\rho) = c_{*,\lambda}k(\rho)^{-\lambda}k'(\rho)^{-\frac{1}{2}}e^{\tilde{a}(\rho)},
\end{equation}
where the constant $c_{*,\lambda}>0$ is determined to satisfy the initial conditions and
\begin{equation}\label{4.1a}
\tilde{a}(\rho) := \frac{\eps}{2}\int_{0}^{\rho}\big(-A_0(s) + \frac{k(s)}{k'(s)}B_0(s)\big)\dd s,
\end{equation}
as defined in \eqref{3.14a}.
Defining $\alpha_1(\rho) := a_1(\rho)k(\rho)^{2\lambda + 1}$, we obtain
\beq\label{eq:al1-ode}
\frac{\alpha_1'}{\alpha_1} = -\frac{k''}{2k'} + (\lambda +1)\frac{k'}{k} - \frac{\eps}{2}A_0 + \frac{\eps}{2}\frac{k}{k'}B_0.
\eeq
Cancelling the next highest order singularities, we obtain the following equation for $a_2(\rho)$:
\beqs
a_2'+a_2\Big(\frac{k''}{2k'}+(\la+1)\frac{k'}{k}+\frac{\eps}{2}A_0-\frac{\eps}{2}\frac{k}{k'}B_0\Big)
=-\frac{1}{4(\la+1)kk'}\tilde{W},
\eeqs
where
\beq \label{eq:W}
\tilde{W}=a_1'' + \eps A_0a_1' + 2\lambda \eps^2a_1(kk'A_1 - k^2B_1)+2\lambda\eps B_0 a_1.
\eeq
Defining $\alpha_2(\rho) := a_2(\rho)k(\rho)^{2\lambda + 3}$, we have
\beqa\label{eq:al2-ode}
&\alpha_2' + \alpha_2\big(\frac{k''}{2k'} - (\lambda + 2)\frac{k'}{k} + \frac{\eps}{2}A_0 - \frac{\eps}{2}\frac{k}{k'}B_0\big) \\
&= -\frac{k}{4(\lambda +1)k'}\big(\alpha_1''+\eps A_0\alpha_1'-\eps B_0\alpha_1+2\lambda \eps^2\alpha_1(kk'A_1 - k^2B_1)\big)=: \Omega.
\eeqa
We take the less singular solution to this singular differential equation given by
\begin{equation*}
\alpha_2(\rho) = e^{\tilde{a}(\rho)}k(\rho)^{\lambda + 2}k'(\rho)^{-\frac{1}{2}}
\int_{0}^{\rho}e^{-\tilde{a}(\tau)}k(\tau)^{-\lambda - 2}k'(\tau)^{\frac{1}{2}}\Omega(\tau)\,\dd \tau.
\end{equation*}
From the observations in Lemma \ref{lemma:k-asymptotics},
we see that $k(\tau)^{-\lambda - 2}k'(\tau)^{\frac{1}{2}}\Omega(\tau) = O(\tau^{\frac{\gamma -3}{2}})$ as $\tau\to0$,
so that it is integrable.
Then we conclude that
\begin{equation}\label{eq:a_2coefficient}
\begin{split}
a_2(\rho) =&\, e^{\tilde{a}(\rho)}k(\rho)^{-\lambda - 1}k'(\rho)^{-\frac{1}{2}}
  \int_{0}^{\rho}e^{-\tilde{a}(\tau)}k(\tau)^{-\lambda - 2}k'(\tau)^{\frac{1}{2}}\Omega(\tau)\,\dd \tau\\
=&-\frac{1}{4(\lambda+1)}e^{\tilde{a}(\rho)}k(\rho)^{-\lambda-1}k'(\rho)^{-\frac{1}{2}}
  \int_{0}^{\rho}e^{-\tilde{a}(\tau)}k(\tau)^{\lambda}k'(\tau)^{-\frac{1}{2}}\tilde{W}(\tau)\,\dd \tau
\end{split}
\end{equation}
is well defined, where $\tilde{W}$ is defined in \eqref{eq:W}.
Throughout the paper, we exploit the fact that there exist $C_1$ and $C_2$
depending on $\rho_M$ such that $0<C_1\leq e^{\tilde{a}(\rho)}\leq C_2<\infty$.

\subsection{Proof of the existence and regularity of the entropy kernel}\label{sec:entropykernelproofs}
As coefficients $A$ and $B$ of the first-order terms in the entropy equation depend on both $\rho$ and $v$,
we isolate the principal part of the operator in order to exploit the Fourier transform.
For this purpose, we define an operator $\tilde{\mathbf{L}}$ by
\begin{equation}\label{def:P}
\tilde{\mathbf{L}} := \partial_{\rho \rho} - k'(\rho)^2\partial_{vv} + \eps A_0(\rho)\partial_{\rho} - \beta(\rho),
\end{equation}
where
\begin{equation*}
\beta(\rho) := \frac{\alpha_\sharp''(\rho)}{\alpha_\sharp(\rho)} + \eps A_0(\rho)\frac{\alpha_\sharp'(\rho)}{\alpha_\sharp(\rho)},
\end{equation*}
and
$$
\alpha_\sharp(\rho)=c_\sharp e^{-\frac{\eps}{2}\int_0^{\rho}A_0(s)\,\dd s}k(\rho)^{\lambda+1}k'(\rho)^{-\frac{1}{2}},
$$
with constant $c_\sharp>0$ to be chosen later.
We observe that $\al_\sharp$ satisfies the equation:
$$
\frac{\alpha_\sharp'}{\alpha_\sharp} = -\frac{k''}{2k'} + \frac{(\lambda +1)k'}{k} - \frac{\eps}{2}A_0.
$$
By the asymptotics for $k(\rho)$ given in Lemma \ref{lemma:k-asymptotics},
we find  that $\beta(\rho)$ is $O(\rho^{\gamma - 3})$ as $\rho \rightarrow 0$.

With operator $\tilde{\mathbf{L}}$, we work in the Fourier space.
We therefore determine an equation for the expression: $\mathcal{F}(\tilde{\mathbf{L}}g)(\rho,\xi)$,
where $\F$ denotes the Fourier transform in variable $v$.
To calculate the Fourier transform of function $[k(\rho)^2-v^2]_+^\la$, we use the following facts:

For ease of notation, we write $f_{\lambda}(y):=[1-y^2]_+^{\lambda}$ so that
$$
[k(\rho)^2-v^2]_+^\la=k(\rho)^{2\la}f_\la\big(\frac{v}{k(\rho)}\big).
$$
Recalling now from \cite{GelfandShilov} that the Fourier transform of $f_\la(y)$ is
$$
\widehat{f}_\la(\xi)=\sqrt{\pi}\Ga(\la+1)2^{\la+\half}|\xi|^{-\la-\half}J_{\la+\half}(\xi),
$$
where $J_\nu$ is the Bessel function of first type of order $\nu$, we have
$$
\F\big([k(\rho)^2-v^2]_+^\la\big)=k(\rho)^{2\la+1}\widehat{f}_{\la}(k(\rho)\xi)
=C\big(\frac{k(\rho)}{\xi}\big)^{\la+\half}J_{\la+\half}(k(\rho)\xi).
$$
We are therefore able to derive an equation for $\tilde{\mathbf{L}}g$ in the Fourier space.

\begin{prop}\label{prop:Pgexpression}
The remainder function $g(\rho,v)$ satisfies
 \begin{align*}
  \F(\tilde{\mathbf{L}}g)(\rho,\xi)=&\,\mathcal{F}({\mathcal{S}(g)})(\rho,\xi)\\
  :=&-\mathcal{F}(\eps(A-A_0)g_\rho)-\mathcal{F}(\eps Bvg_v)-\beta(\rho)\widehat{g}+H_0(\rho)\widehat{f}_{\lambda+1}(k(\rho)\xi) +\eps^2r(\rho,\xi),
\end{align*}
where $H_0(\rho)=O(\rho^{-1+2\theta})$ as $\rho\rightarrow0$,
and $r(\rho,\xi)=O(\rho^{-1+2\th}(k(\rho)|\xi|)^{-\la-1-\al-\half})$ as $|\xi|\to\infty$, for some $\al>0$.
In particular, $r(\rho,\xi)$ is asymptotically like $\widehat{f}_{\la+1+\al}(k(\rho)\xi)$ as $\xi\to\infty$.
\end{prop}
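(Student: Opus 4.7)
First, I would substitute the ansatz~\eqref{eq:relativisticentropyexpansion} into the entropy equation $\mathbf{L}\chi=0$ and invoke the Leibniz-rule expansion computed at the start of Section~\ref{sec:entropykernelcoeffs}. The coefficients $a_1$ and $a_2$ were defined via the ODEs~\eqref{eq:al1-ode} and~\eqref{eq:al2-ode} precisely so that the $G_{\lambda-1}$- and $G_\lambda$-brackets in that expansion vanish, leaving
\beqs
\mathbf{L}g \;=\; -\,G_{\lambda+1}(\rho,v)\,T(\rho,v),
\eeqs
where $T(\rho,v)$ is the $G_{\lambda+1}$-bracket displayed at the end of Section~\ref{sec:entropykernelcoeffs}. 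Converting $\mathbf{L}$ to the operator $\tilde{\mathbf{L}}$ defined in~\eqref{def:P} via the algebraic identity $\mathbf{L}=\tilde{\mathbf{L}}+\eps(A-A_0)\partial_\rho+\eps Bv\partial_v+\beta(\rho)$ and Fourier-transforming in $v$, I obtain
\beqs
\mathcal{F}(\tilde{\mathbf{L}}g)\;=\;-\,\mathcal{F}(G_{\lambda+1}T)-\mathcal{F}(\eps(A-A_0)g_\rho)-\mathcal{F}(\eps Bvg_v)-\beta(\rho)\widehat g,
\eeqs
so the entire problem reduces to analyzing $\mathcal{F}(G_{\lambda+1}T)$.

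I would then decompose $T(\rho,v)=T_0(\rho)+T_1(\rho,v)$, with $T_0$ collecting the summands of $T$ that are $v$-independent and $T_1$ collecting those that involve $A-A_0$, $B-B_0$, or an explicit factor of $v^2$. Using Lemma~\ref{lemma:A,Bdecomposition} to extract a factor $\eps[k^2-v^2]_+$ from each of $A-A_0$ and $B-B_0$, and using $v^2=k^2-(k^2-v^2)$ on the support $\{|v|\le k\}$ to rewrite the remaining $v^2$-factors, the product $G_{\lambda+1}T_1$ becomes $\eps^2\sum_{\alpha\in\{1,2,3\}}\psi_\alpha(\rho,v)\,G_{\lambda+1+\alpha}(\rho,v)$, with $\psi_\alpha$ uniformly bounded by $C\rho^{-1+2\theta}$ (by Lemma~\ref{lemma:k-asymptotics} and the coefficient estimates from Section~\ref{sec:entropykernelcoeffs}). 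Invoking the Fourier identity $\mathcal{F}(G_\nu)(\rho,\xi)=k(\rho)^{2\nu+1}\widehat f_\nu(k(\rho)\xi)$ already recorded in the paper, together with the elementary pointwise bound $|\widehat f_\mu(x)|\le C(1+|x|)^{-\mu-1/2}$ inherited from the boundedness of Bessel functions, the $T_1$-piece produces exactly the $\eps^2r(\rho,\xi)$ term with the claimed large-$|\xi|$ decay, while the $T_0$-piece yields $-T_0(\rho)\mathcal{F}(G_{\lambda+1})=H_0(\rho)\widehat f_{\lambda+1}(k(\rho)\xi)$ with $H_0(\rho):=-k(\rho)^{2\lambda+3}T_0(\rho)$.

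The asymptotic $H_0(\rho)=O(\rho^{-1+2\theta})$ is then verified by differentiating the explicit formula~\eqref{eq:a_2coefficient} for $a_2$ twice, applying Lemma~\ref{lemma:k-asymptotics}, and using $2\lambda\theta=1-\theta$: the four summands of $T_0$ combine to give $O(\rho^{\theta-2-(2\lambda+3)\theta})$, and multiplication by $k(\rho)^{2\lambda+3}\sim\rho^{(2\lambda+3)\theta}$ yields $O(\rho^{\theta-2})=O(\rho^{-1+2\theta})$. The principal obstacle lies exactly in this last cancellation: each individual summand of $T_0$ is more singular than $\rho^{-1+2\theta}$, and making the cancellation explicit requires the specific integral representation of $\alpha_2$ in~\eqref{eq:a_2coefficient} (chosen as the less-singular solution of the $a_2$-ODE), together with careful bookkeeping of the $k^2$-residues leaked into $T_0$ by the $v^2=k^2-(k^2-v^2)$ identity when applied to the $\eps B_0v(a_2G_{\lambda+1})_v$-type contributions, which must be assigned to $H_0$ rather than to $r$.
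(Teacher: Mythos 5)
Your strategy is sound, and in fact it is essentially the paper's strategy with the steps reordered: the paper transforms to Fourier variables first and then uses Bessel recurrences to effect the cancellation of the $\widehat{f}_\lambda$ terms coming from the ODEs for $\alpha_1,\alpha_2$, whereas you carry out the cancellation of the $G_{\lambda-1}$ and $G_\lambda$ brackets in physical $v$-space (using the identical ODEs for $a_1,a_2$) and only then Fourier-transform the surviving $G_{\lambda+1}$ bracket. Both routes identify the same operator decomposition $\mathbf{L}=\tilde{\mathbf{L}}+\eps(A-A_0)\partial_\rho+\eps Bv\partial_v+\beta$, and both use the Taylor decomposition of $A,B$ from Lemma~\ref{lemma:A,Bdecomposition} together with the Fourier identity $\F(G_\nu)=k^{2\nu+1}\widehat f_\nu(k\xi)$ and the Bessel decay bound. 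So the architecture is correct.

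There is, however, a computational error in the last paragraph that actually matters. You assert that the summands of $T_0$ combine to $O(\rho^{\theta-2-(2\lambda+3)\theta})$, and that multiplying by $k(\rho)^{2\lambda+3}\sim\rho^{(2\lambda+3)\theta}$ gives $O(\rho^{\theta-2})=O(\rho^{-1+2\theta})$. The final equality is false: $\theta-2=-1+2\theta$ only if $\theta=-1$, whereas $\theta=(\gamma-1)/2\in(0,1)$. Worse, the claimed intermediate exponent $\rho^{\theta-2-(2\lambda+3)\theta}=\rho^{-\theta-3}$ (using $(2\lambda+3)\theta=1+2\theta$) is more singular than what one actually gets, and combined with $k^{2\lambda+3}\sim\rho^{1+2\theta}$ it would give $H_0=O(\rho^{\theta-2})$, which is strictly \emph{worse} than the required $O(\rho^{-1+2\theta})$ — so taken at face value your computation would fail to prove the proposition. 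The correct bookkeeping is more direct and requires no cancellation among the summands of $T_0$ at all: from $a_1,a_2=O(1)$, $a_2'=O(\rho^{2\theta-1})$, $a_2''=O(\rho^{2\theta-2})$ (all from the explicit representations via Lemma~\ref{lemma:k-asymptotics}) and $\eps A_0,\eps B_0,\eps^2 B_1=O(\rho^{2\theta-2})$, each of the four summands of $T_0=a_2''+\eps A_0a_2'+4(\lambda+1)\eps B_0a_2+2\lambda\eps^2B_1a_1$ is individually $O(\rho^{2\theta-2})$, hence $H_0=-k^{2\lambda+3}T_0=O(\rho^{1+2\theta}\cdot\rho^{2\theta-2})=O(\rho^{4\theta-1})\subset O(\rho^{-1+2\theta})$. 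The genuine role of picking the less singular solution of the $\alpha_2$-ODE is upstream: it is what guarantees $a_2$ is \emph{bounded} (a homogeneous solution would make $a_2\sim\rho^{-\theta}$ and then $k^{2\lambda+3}a_2''\sim\rho^{\theta-1}$ would be too singular), not a cancellation among the four summands of $T_0$ as you suggest. Correcting this arithmetic, the proposal goes through.
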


\begin{proof}
We write $X^1:= a_1(\rho)[k(\rho)^2-v^2]_+^\la$ and $X^2:=a_2(\rho)[k(\rho)^2-v^2]_+^{\la+1}$.
From the considerations above, we find that $\F(X^1)=\alpha_1(\rho)\widehat{f}_{\lambda}(k(\rho)\xi)$
and $\F(X^2)=\alpha_2(\rho)\widehat{f}_{\lambda+1}(k(\rho)\xi)$.
Now, rearranging the entropy equation, we obtain
\begin{align*}
  \F(\tilde{\mathbf{L}}g)(\rho,\xi)=&-I-II-\eps^3\mathcal{F}(A_2[k^2-v^2]_+^2X^1_{\rho})-\eps\mathcal{F}((A-A_0)X^2_{\rho})\\
  &-\mathcal{F}(\eps^3 B_2[k^2-v^2]_+^2vX^1_v+\eps(B-B_0)vX^2_v)\\
  &-\mathcal{F}(\eps(A-A_0)g_\rho)-\mathcal{F}(\eps Bvg_v)-\beta(\rho)\widehat{g},
\end{align*}
where
\beqas
   I:=&\,\F(X^1_{\rho\rho})+k'(\rho)^2\xi^2\F(X^1) + \mathcal{F}(\eps (A_0+\eps A_1[k^2-v^2]_+)X^1_{\rho})\\
   &+\mathcal{F}(\eps(B_0v + \eps B_1[k^2-v^2]_+v)X^1_v)
\eeqas
and
\beqas
II:=&\,\F(X^2_{\rho\rho})+k'(\rho)^2\xi^2\F(X^2) + \eps A_0\F(X^2_{\rho}) + \eps\mathcal{F}(B_0vX^2_v).
\eeqas
As $\widehat{f}_{\lambda}'(y)=-\frac{2\lambda+1}{y}\widehat{f}_\lambda + \frac{2\lambda}{y}\widehat{f}_{\lambda-1}$, we calculate
\beqas
  I=&\,\widehat{f}_\lambda\big(\alpha_1''+\eps A_0\alpha_1'-\eps B_0\alpha_1+2\lambda\eps^2\alpha_1(kk'A_1-k^2B_1)\big)\\
  &+\widehat{f}_{\lambda+1}\big(\eps^2A_1\alpha_1'k^{2\lambda+3}+2\lambda\eps^2 B_1k^2\alpha_1\big).
\eeqas
 Similarly,
\begin{align*}
  II=&-\widehat{f}_{\lambda}\big(\alpha_1''+\eps A_0\alpha_1'-\eps B_0\alpha_1+2\lambda\eps^2\alpha_1(kk'A_1-k^2B_1)\big)\\
  &+\widehat{f}_{\lambda+1}\Big(\alpha_2''+\eps A_0\alpha_2'-\eps B_0\alpha_2\\
  &\qquad\qquad +\frac{2\lambda+3}{2(\lambda+1)}\big(\alpha_1''+\eps A_0\alpha_1'-\eps B_0\alpha_1+2\lambda\eps^2\alpha_1(kk'A_1-k^2B_1)\big)\Big),
\end{align*}
where we have used that coefficients  $\al_1$ and $\al_2$ satisfy \eqref{eq:al1-ode} and \eqref{eq:al2-ode}.
Then we obtain
\begin{equation*}
 \begin{split}
  I+II=\widehat{f}_{\lambda+1}\Big(&\alpha_2''+\eps A_0\alpha_2'-\eps B_0\alpha_2+\eps^2A_1\alpha_1'k^{2\lambda+3}+2\lambda\eps^2 B_1k^2\alpha_1\\
  &+\frac{2\lambda+3}{2(\lambda+1)}\big(\alpha_1''+\eps A_0\alpha_1'-(2\lambda+1)\eps B_0\alpha_1+2\lambda\eps^2\alpha_1(kk'A_1-k^2B_1)\big)\Big).
 \end{split}
\end{equation*}
Denote
$$
\eps^2r(\rho,\xi):=\F\big(-\eps^3(A_2[k^2-v^2]_+^2X^1_{\rho})-\eps(A-A_0)X^2_{\rho}-\eps^3 B_2[k^2-v^2]_+^2vX^1_v+\eps(B-B_0)X^2_v\big)
$$
as a higher order term.
Then we have
\begin{align*}
  \F(\tilde{\mathbf{L}}g)(\rho,\xi)=\mathcal{F}({\mathcal{S}(g)})\,\,&\\
  :=-\mathcal{F}(\eps(A&-A_0)g_\rho)-\mathcal{F}(\eps Bvg_v)-\beta(\rho)\widehat{g}-H_0(\rho)\widehat{f}_{\lambda+1}(k(\rho)\xi) +\eps^2r(\rho,\xi),
\end{align*}
where
\beqas
H_0(\rho)=&\,\alpha_2''+\eps A_0\alpha_2'-\eps B_0\alpha_2+\eps^2A_1k^{2\lambda+3}\alpha_1' +2\lambda \eps^2 B_1k^2\alpha_1\\
  \qquad&+\frac{2\lambda+3}{2(\lambda+1)}\big(\alpha_1''+\eps A_0\alpha_1'-(2\lambda+1)\eps B_0\alpha_1+2\lambda\eps^2 (kk'A_1-k^2B_1)\alpha_1\big)
\eeqas
as required.
We note that $H_0(\rho)=O(\rho^{-1+2\theta})$ as $\rho\rightarrow0$ from the limiting forms of $k(\rho)$ given
in Lemma \ref{lemma:k-asymptotics}.
Moreover, one can check that the remainder function $r(\rho,\xi)$ acts as the Fourier transform of the product of a smooth (even Schwartz)
function with $f_{\la+1+\al}$ for some $\al>0$,
so that
$$
r(\rho,\xi)=O\big(\rho^{-1+2\th}(k(\rho)|\xi|)^{-\la-1-\al-\half}\big)\qquad\, \text{ as }\rho\to0.
$$
\end{proof}

Recalling the definition of $\tilde{\mathbf{L}}$ and applying the Fourier transform with respect to $v$,
we obtain the following differential equation:
\beq\label{4.8a}
\F(\tilde{\mathbf{L}}g)(\rho,\xi)
:=\widehat{g}_{\rho\rho}(\rho,\xi)+k'(\rho)^2\xi^2\widehat{g}(\rho,\xi)+\eps A_0(\rho)\widehat{g}_\rho(\rho,\xi)-\beta(\rho)\widehat{g}(\rho,\xi)
=\mathcal{F}({\mathcal{S}(g)}).
\eeq
We solve this equation by the method of variation of parameters.
In order to do this, we require the fundamental solutions of operator $\F(\tilde{\mathbf{L}}\,\cdot)$.
These fundamental solutions are, by definition, $\widehat{\chi}^{\sharp}$ and $\widehat{\chi}^{\flat}$ determined by
\begin{equation*}
\begin{cases}
\F\big(\tilde{\mathbf{L}}\chi^{\sharp}\big) = 0,\\
\widehat{\chi}^{\sharp}\rvert_{\rho=0} = 0,\,\,\,\widehat{\chi}^{\sharp}_{\rho}\rvert_{\rho = 0} = 1,
\end{cases}
\end{equation*}
and
\begin{equation*}
\begin{cases}
\F\big({\tilde{\mathbf{L}}\chi^{\flat}}\big) = 0, \\
\widehat{\chi}^{\flat}\rvert_{\rho=0} = 1,\,\,\, \widehat{\chi}^{\flat}_{\rho}\rvert_{\rho = 0} = 0,
\end{cases}
\end{equation*}
respectively.

\begin{lemma}\label{lemma:Ltildefundamental}
The fundamental solutions of the Fourier transformed equations are
\begin{equation}
\widehat{\chi}^{\sharp}(\rho,\xi) = \alpha_\sharp(\rho)(\xi k(\rho))^{-\nu}J_{\nu}(\xi k(\rho)), \quad\,
\widehat{\chi}^{\flat}(\rho,\xi) = \alpha_\sharp(\rho)\big(\frac{k(\rho)}{\xi}\big)^{-\nu}Y_{\nu}(\xi k(\rho)),
\end{equation}
where $\nu=\lambda+\frac{1}{2}$, $J_{\nu}$ and $Y_{\nu}$ are the Bessel functions of order $\nu$ of first and second type respectively,
and the constant $c_\sharp>0$ in the definition of $\alpha_\sharp(\rho)$ is chosen to satisfy the initial conditions.
\end{lemma}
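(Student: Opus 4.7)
The plan is to recognize that, after the Fourier transformation in $v$, the operator $\tilde{\mathbf{L}}$ acts on $\widehat\chi(\rho,\xi)$ as a second-order linear ODE in $\rho$ (with $\xi$ as a parameter), and to reduce this ODE to Bessel's equation through a multiplicative substitution plus a change of independent variable. Concretely, I will seek solutions of the form
\[
\widehat\chi(\rho,\xi)=\alpha_\sharp(\rho)\,F\bigl(k(\rho)\xi\bigr),
\]
and verify that $F$ is then forced to satisfy a Bessel-type ODE of order $\nu=\lambda+\tfrac12$, whose two linearly independent solutions, after undoing the substitution, yield $\widehat\chi^\sharp$ and $\widehat\chi^\flat$ up to $\xi$-dependent constants.

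For the reduction, I substitute the ansatz into $\widehat g_{\rho\rho}+\eps A_0(\rho)\widehat g_\rho+\bigl(k'(\rho)^2\xi^2-\beta(\rho)\bigr)\widehat g=0$ and apply the chain rule, grouping terms by their dependence on $F$, $F'$, $F''$. The coefficient of $F$ alone is $\alpha_\sharp''/\alpha_\sharp+\eps A_0\alpha_\sharp'/\alpha_\sharp-\beta(\rho)$, which vanishes identically by the very definition of $\beta(\rho)$ given together with $\tilde{\mathbf{L}}$. The coefficient of $F'$ equals $k'(\rho)\xi\bigl(2\alpha_\sharp'/\alpha_\sharp+k''/k'+\eps A_0\bigr)$; feeding in the explicit ODE $\alpha_\sharp'/\alpha_\sharp=-\tfrac{k''}{2k'}+\tfrac{(\lambda+1)k'}{k}-\tfrac{\eps}{2}A_0$ already recorded in the text collapses this to $2(\lambda+1)k'(\rho)^2\xi/k(\rho)$. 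Setting $\psi:=k(\rho)\xi$ and dividing what remains by $(k'(\rho)\xi)^2$ produces
\[
F''(\psi)+\frac{2\nu+1}{\psi}F'(\psi)+F(\psi)=0,\qquad \nu=\lambda+\tfrac12.
\]
The substitution $F=\psi^{-\nu}G$ converts this into the classical Bessel equation $G''+G'/\psi+(1-\nu^2/\psi^2)G=0$, so two linearly independent solutions are $F(\psi)=\psi^{-\nu}J_\nu(\psi)$ and $F(\psi)=\psi^{-\nu}Y_\nu(\psi)$, giving exactly the formulas advertised in the lemma after absorbing an (irrelevant) $\xi$-only factor of $\xi^{2\nu}$ into the normalization of the second solution.

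The remaining task is to pin down $c_\sharp$ from the initial conditions at $\rho=0$. From Lemma \ref{lemma:k-asymptotics} and the identity $2\lambda\theta=1-\theta$, one gets $(2\lambda+1)\theta=1$, which yields $\alpha_\sharp(\rho)\sim c_\sharp\theta^{-1/2}\rho$ as $\rho\to 0$. Combining this with $J_\nu(x)\sim x^\nu/(2^\nu\Gamma(\nu+1))$ at $x=0^+$ shows $\widehat\chi^\sharp(\rho,\xi)\to 0$ and $\widehat\chi^\sharp_\rho(\rho,\xi)\to c_\sharp\theta^{-1/2}/(2^\nu\Gamma(\nu+1))$ uniformly in $\xi$ as $\rho\to0$, so a unique choice of $c_\sharp$ matches the sharp initial conditions. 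For $\widehat\chi^\flat$ I use $Y_\nu(x)\sim -\Gamma(\nu)\pi^{-1}(2/x)^\nu$ to obtain $\xi^\nu k^{-\nu}Y_\nu(\xi k)\sim -\Gamma(\nu)2^\nu\pi^{-1}k^{-2\nu}$; then $\alpha_\sharp(\rho)k(\rho)^{-2\nu}$ has matching $\rho$-exponents in numerator and denominator (again by $(2\lambda+1)\theta=1$) and hence tends to a nonzero finite limit independent of $\xi$, which delivers $\widehat\chi^\flat\rvert_{\rho=0}=1$ after normalization.

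The main obstacle is not the algebraic reduction to Bessel's equation, which is essentially automatic once the ansatz, the ODE for $\alpha_\sharp$, and the definition of $\beta(\rho)$ are assembled, but rather the joint control of the two coupled singular behaviors $\alpha_\sharp(\rho)\to 0$ and $k(\rho)\to 0$ that is needed to read off the initial data at $\rho=0$. It is precisely the relation $(2\lambda+1)\theta=1$, encoding the genuine-nonlinearity exponent $\lambda=(3-\gamma)/(2(\gamma-1))$, that makes $\alpha_\sharp(\rho)\sim\rho$ and makes $\alpha_\sharp(\rho)k(\rho)^{-2\nu}$ finite and nonzero at the origin; this is what ensures that both Bessel-type solutions survive as linearly independent fundamental solutions through the degeneracy at $\rho=0$.
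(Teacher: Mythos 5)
Your derivation is correct and amounts to the same underlying computation the paper cites: the paper's proof is the one-line ``follows by a direct calculation from the Bessel recursion identities,'' and what you have written out is that calculation in detail, framed as reducing the $\rho$-ODE to Bessel's equation under the substitution $\widehat\chi=\alpha_\sharp(\rho)F(k(\rho)\xi)$ rather than plugging in the answer and checking. The algebra is right — the $\xi$-independent coefficient of $F$ vanishes by the definition of $\beta$, the coefficient of $F'$ collapses to $2(\lambda+1)/(k\xi)=(2\nu+1)/\psi$ via the ODE for $\alpha_\sharp'/\alpha_\sharp$, and the $\psi^{-\nu}$ substitution gives Bessel's equation of order $\nu$ — and your observation that the paper's $\widehat\chi^\flat$ formula differs from $\alpha_\sharp\psi^{-\nu}Y_\nu(\psi)$ only by the $\rho$-independent factor $\xi^{2\nu}$ is a worthwhile clarification.

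Two small points worth tightening. First, your verification of the $\sharp$-initial data is only pointwise in $\xi$, not uniform: the second contribution to $\widehat\chi^\sharp_\rho$ is of size $O(\rho^{2\theta}\xi^2)$, which vanishes as $\rho\to0$ for each fixed $\xi$ but grows in $\xi$. Second, and more substantively, you checked $\widehat\chi^\flat\rvert_{\rho=0}$ is finite and nonzero but did not address $\widehat\chi^\flat_\rho\rvert_{\rho=0}=0$, which is actually the delicate part: each of the two contributions to $\widehat\chi^\flat_\rho$ — the $\alpha_\sharp'\,\xi^{2\nu}(\xi k)^{-\nu}Y_\nu(\xi k)$ piece and the $\alpha_\sharp\,\xi^{2\nu+1}k'\,(\xi k)^{-\nu}Y_{\nu+1}(\xi k)$ piece — diverges like $\rho^{-1}$ as $\rho\to0$, and the divergences cancel exactly because $2\nu\theta=1$ (equivalently $\Gamma(\nu+1)=\nu\Gamma(\nu)$ together with $2\nu\theta=1$). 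It is worth stating this cancellation explicitly, since it is the same identity $(2\lambda+1)\theta=1$ that you correctly identified as the crux of the $\sharp$-matching. Finally, as you implicitly noticed, a single positive constant $c_\sharp$ cannot simultaneously make $\widehat\chi^\sharp_\rho(0,\xi)=1$ and $\widehat\chi^\flat(0,\xi)=1$ (the limiting value of $\widehat\chi^\flat$ is $-c_\sharp\theta^{-1/2}\Gamma(\nu)2^\nu\pi^{-1}$, which has the wrong sign); this is an imprecision in the paper's statement rather than your proof, and it is harmless because the variation-of-parameters formula used downstream is homogeneous of degree zero in $\widehat\chi^\flat$, so only the Wronskian normalization matters.
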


\begin{proof}
This follows by a direct calculation from the identities:
\begin{align*}
 \mathcal{C}_{\nu}'(y)=\mathcal{C}_{\nu-1}(y)-\frac{\nu}{y}\mathcal{C}_\nu(y)=
 \mathcal{C}_{\nu}'(y)=-\mathcal{C}_{\nu+1}(y)+\frac{\nu}{y}\mathcal{C}_\nu(y)
\end{align*}
for $\mathcal{C}_\nu(y)=J_\nu(y),Y_\nu(y)$;
also see \cite{NIST}.
\end{proof}

We solve the differential equation \eqref{4.8a}
by the method of variation of parameters.
First, we calculate the Wronskian of the fundamental solutions.
Note  that  the Wronskian ({\it cf.} \cite{NIST}):
\begin{equation*}
  w(s,\xi):=Y_\nu(\xi k(s))J_\nu'(\xi k(s))-J_\nu(\xi k(s))Y_\nu'(\xi k(s))=\frac{2\nu}{\xi k(s)},
\end{equation*}
so that
\begin{align*}
  W(s,\xi):=\big(\widehat{\chi}^\sharp(\xi k(s))\partial_s\widehat{\chi}^{\flat}(\xi k(s))-\widehat{\chi}^{\flat}(\xi k(s))\partial_s\widehat{\chi}^\sharp(\xi k(s))\big)
  =c_\sharp^2e^{-\eps\int_0^{s}A_0(\tau)\dd\tau}.
\end{align*}
The method of variation of parameters then gives that a particular solution
of \eqref{4.8a} is
\beqa\label{eq:ghatrep}
\widehat{g}(\rho,\xi) =& \int_{0}^{\rho}\frac{\widehat{\chi}^{\sharp}(\rho,\xi)\widehat{\chi}^{\flat}(s,\xi) - \widehat{\chi}^{\sharp}(s,\xi)\widehat{\chi}^{\flat}(\rho,\xi)}{\widehat{\chi}^{\sharp}_s(s,\xi)\widehat{\chi}^{\flat}(s,\xi) - \widehat{\chi}^{\sharp}(s,\xi)\widehat{\chi}^{\flat}_s(s,\xi)}\F(\tilde{\mathbf{L}}g)(s,\xi)\,\dd s\\[2mm]
=&\Big(\frac{k(\rho)}{k'(\rho)}\Big)^{\frac{1}{2}}e^{-\frac{\eps}{2}\int_0^{\rho}A_0(\tau)\,\dd \tau}
  \int_0^{\rho}K(\rho,s;\xi)e^{\frac{\eps}{2}\int_0^{s}A_0(\tau)\,\dd \tau}\Big(\frac{k(s)}{k'(s)}\Big)^{\frac{1}{2}}\F(\mathcal{S}(g))(s,\xi)\,\dd s,
\eeqa
where
$$
K(\rho,s;\xi):=Y_\nu(\xi k(\rho))J_\nu(\xi k(s))-J_\nu(\xi k(\rho))Y_\nu(\xi k(s)).
$$

Define a new integral kernel $\tilde{K}(\rho,s;\xi)$  by
\beq\label{eq:Ktilde}
\tilde{K}(\rho,s;\xi)=\Big(\frac{k(\rho)}{k'(\rho)}\Big)^{\frac{1}{2}}\Big(\frac{k(s)}{k'(s)}\Big)^{\frac{1}{2}}
  K(\rho,s;\xi)e^{-\frac{\eps}{2}\int_s^{\rho}A_0(\tau)\,\dd \tau},
  \eeq
and look for a fixed point of
  \beqs
  \widehat{g}(\rho,\xi)=\int_0^\rho\tilde{K}(\rho,s,\xi)\F(\mathcal{S}(g))\,\dd s.
  \eeqs
Then we show via a fixed point argument that such a function $\widehat{g}(\rho,\xi)$ exists in Theorem \ref{thm:entkernelexistence}.

Before proving this theorem, we first make a few observations.
To simplify the notation and bounds later, we set
\begin{align*}
 Q_{\pm\nu}(y):=\begin{cases}
            |y|^{\pm\nu} &\text{ for }|y|\leq1,\\
            |y|^{-\frac{1}{2}} &\text{ for }|y|\geq1,
           \end{cases}
\end{align*}
and
\begin{align*}
  R(y):=\begin{cases}
            1 &\text{ for }|y|\leq1,\\
            |y|^{-1} &\text{ for }|y|\geq1.
           \end{cases}
\end{align*}
Then $|J_\nu(y)|\leq CQ_\nu(y)$ and $|Y_\nu(y)|\leq CQ_{-\nu}(y)$ for $y>0$.
Thus, we may bound
\begin{equation}\label{ineq:K}
|K(\rho,s;\xi)|\leq CQ_\nu(\xi k(\rho))Q_\nu(\xi k(s))^{-1}R(\xi k(s)),
\end{equation}
where $C>0$ is independent of $(\rho, s, \xi)$ for $0\leq s\leq\rho\leq\rho_M$ and $\xi\in\R$.

The following lemma provides accurate estimates for various $L^p$ and weighted $L^p$ norms
of the kernel $\tilde{K}(\rho,s;\xi)$. These are simple consequences of \eqref{ineq:K}.

\begin{lemma}\label{lemma:Kbounds}
For $0\leq s\leq\rho\leq\rho_M$ and $\xi\in\R$,
\beqas
&\tilde{K}(\rho,\rho;\xi)=0,\qquad \tilde{K}(\rho,0;\xi)=0.\\
&\|\tilde{K}(\rho,s;\cdot)\|_{L^2}\leq\, C\rho^{1-\frac{\theta}{2}},
\eeqas
and, for $0\leq\mu<\frac{1}{2}$,
\begin{align}
&\|\xi\tilde{K}(\rho,s;\xi)\|_{L^{\infty}_{\xi}}\leq\, C\rho^{1-\theta},\\
&\||\xi|^{\mu}\tilde{K}(\rho,s;\xi)\|_{L^2_\xi}\leq \,C\rho^{1-\frac{\theta}{2}-\mu\theta}.
\end{align}
\end{lemma}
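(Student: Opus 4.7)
The plan is to prove the boundary identities and the three norm estimates in turn, using the pointwise bound \eqref{ineq:K} on $K$ and the Bessel-function asymptotics from Lemma \ref{lemma:k-asymptotics} as the main input. The central algebraic input throughout is the identity $2\nu\theta=1$ (following from $\nu=\lambda+\tfrac12$ and $2\lambda\theta=1-\theta$), which is what cancels every $s$-singular factor generated by $k(s)^{-\nu}$ against the prefactor $(k(s)/k'(s))^{1/2}\sim (s/\theta)^{1/2}$ and makes the bounds uniform in $s$.

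The boundary identity $\tilde K(\rho,\rho;\xi)=0$ is immediate since $K(\rho,\rho;\xi)\equiv 0$ by the antisymmetric form of the Bessel combination in \eqref{eq:Ktilde}. The vanishing $\tilde K(\rho,0;\xi)=0$ follows by inspecting the small-argument asymptotics: $J_\nu(\xi k(s))= O((\xi k(s))^\nu)$ and $Y_\nu(\xi k(s))=O((\xi k(s))^{-\nu})$, so $|K|\le C|\xi k(s)|^{-\nu}$ near $s=0$, while the prefactor contributes $s^{1/2}$; together with $\nu\theta=\tfrac12$ the product is $O(s^{1/2-\nu\theta})$ and matches the boundary condition of the underlying fundamental solution.

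For the $L^2$ estimate, squaring \eqref{ineq:K} gives
$$
|\tilde K|^2\le C\,\frac{k(\rho)k(s)}{k'(\rho)k'(s)}\,Q_\nu(\xi k(\rho))^2 Q_\nu(\xi k(s))^{-2} R(\xi k(s))^2.
$$
I change variables $\eta=\xi k(\rho)$, set $\tau=k(s)/k(\rho)\in[0,1]$, and split $\R_\eta$ into three regions: (I-a) $|\eta|\le 1$, (I-b) $1\le|\eta|\le 1/\tau$, and (II) $|\eta|\ge 1/\tau$. In each region the integrand is an explicit monomial in $|\eta|$; the contributions are $O(\tau^{-2\nu})$ from (I-a), $O(\tau^{-2\nu})$ from (I-b) (which requires $\mu<\nu$, automatic for $\mu=0$), and $O(1)$ from (II). After multiplying by the Jacobian $k(\rho)^{-1}$ and the $\rho, s$ prefactors, the combination $\tfrac{k(s)}{k'(s)}k(s)^{-2\nu}\sim s^{1-2\nu\theta}=s^0$ cancels the $s$-dependence outright, and the remaining $\rho$-factors assemble to $\rho^{2-\theta}$, giving $\|\tilde K(\rho,s;\cdot)\|_{L^2}\le C\rho^{1-\theta/2}$.

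The $L^\infty$ estimate on $|\xi\tilde K|$ comes from optimising $|\xi|Q_\nu(\xi k(\rho))Q_\nu(\xi k(s))^{-1}R(\xi k(s))$, which is piecewise monomial in $|\xi|$; the supremum is attained near $|\xi|\sim k(\rho)^{-1}$ and equals $C\,k(\rho)^{\nu-1}k(s)^{-\nu}$, so after multiplying by the prefactor and invoking $2\nu\theta=1$ one obtains $C\rho^{1-\theta}$. The weighted $L^2$ bound uses the same three-region decomposition with an extra factor $|\eta|^{2\mu}$ in the integrand; the restriction $0\le\mu<\tfrac12$ is exactly what makes $\int_{1/\tau}^\infty\eta^{2\mu-2}\,\dd\eta$ converge in region (II), and region (I) contributes at most $C\tau^{-2\nu}$ since $\mu<\nu$. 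Reassembling and applying the identity $2\nu\theta=1$ one more time yields $C\rho^{2-\theta-2\mu\theta}$. The main obstacle is not analytical depth but clean bookkeeping: every apparent divergence as $s\to 0$ must be tracked and eliminated by the exponent identity $2\nu\theta=1$ hard-wired into the prefactor of $\tilde K$.
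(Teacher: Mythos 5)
Your three-region decomposition (split at $|\xi|\sim 1/k(\rho)$ and $|\xi|\sim 1/k(s)$), combined with the exponent identity $2\nu\theta=1$ (equivalently $\nu\theta=\tfrac12$), is exactly the right and in fact the only natural route to the three norm estimates; the paper gives no proof, stating only that these are ``simple consequences'' of \eqref{ineq:K}, and your computations of the $L^2$, weighted $L^2$, and $L^\infty_\xi$ bounds check out. Two small bookkeeping remarks: in region (II) ($|\xi k(s)|\ge 1$) the $s$-dependence does not ``cancel outright''; the squared contribution is $\sim k(\rho)^{-1}$ (respectively $k(\rho)^{-1}k(s)^{-2\mu}$ in the weighted case), and after multiplying by the prefactors you must invoke $s\le\rho$ to absorb the residual factor $s^{1/2}$ into $\rho^{1/2}$. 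The dominant contribution, which does cancel exactly as you describe, comes from regions (I-a) and (I-b). You also correctly identify that $\mu<\nu$ is needed in (I-b) (automatic for $\mu<\tfrac12<\nu$) and $\mu<\tfrac12$ is needed in region (II).

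However, your treatment of the boundary identity $\tilde K(\rho,0;\xi)=0$ has a genuine gap, and indeed the identity as stated is not correct. You derive $\tilde K(\rho,s;\xi)=O(s^{1/2-\nu\theta})=O(s^0)$ as $s\to 0$, which is boundedness, not vanishing, and the appeal to ``the boundary condition of the underlying fundamental solution'' does not close the argument. In fact the variation-of-parameters kernel evaluated at $s=0$ is
\[
\tilde K(\rho,0;\xi)=\lim_{s\to 0^+}\tilde K(\rho,s;\xi)=\frac{\widehat\chi^\sharp(\rho,\xi)\widehat\chi^\flat(0,\xi)-\widehat\chi^\sharp(0,\xi)\widehat\chi^\flat(\rho,\xi)}{-W(0,\xi)}=\mp\frac{\widehat\chi^\sharp(\rho,\xi)}{c_\sharp^2},
\]
using $\widehat\chi^\sharp(0,\xi)=0$, $\widehat\chi^\flat(0,\xi)=1$, and $W(0,\xi)=c_\sharp^2$. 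Since $\widehat\chi^\sharp(\rho,\xi)=\alpha_\sharp(\rho)(\xi k(\rho))^{-\nu}J_\nu(\xi k(\rho))$ is generically nonzero for $\rho>0$, so is $\tilde K(\rho,0;\xi)$. Concretely: $(k(s)/k'(s))^{1/2}\sim s^{1/2}$ and $K(\rho,s;\xi)\sim c\,J_\nu(\xi k(\rho))\,\xi^{-\nu}k(s)^{-\nu}\sim c\,J_\nu(\xi k(\rho))\,\xi^{-\nu}s^{-1/2}$, so the product tends to a nonzero multiple of $J_\nu(\xi k(\rho))\xi^{-\nu}$. This inaccuracy is already present in the paper's statement, but it is harmless there: in the integration-by-parts step \eqref{eq:antelope}, the boundary term at $s=0$ vanishes not because of $\tilde K$ but because $G^n(0,\cdot)=0$; only $\tilde K(\rho,\rho;\xi)=0$ (which you prove correctly) is actually used. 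You should replace the claimed vanishing at $s=0$ by the uniform bound $|\tilde K(\rho,0;\xi)|\le C$ and instead appeal to $G^n(0,\cdot)=0$ where the boundary term is discarded.
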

With these bounds in hand, we are now in a position to state and prove the main theorem of this section.
This theorem gives the existence of the remainder function $\widehat{g}(\rho,\xi)$,
and hence the existence of the entropy kernel itself.

\begin{thm}[Existence of the Entropy Kernel]\label{thm:entkernelexistence}
For all $\gamma \in (1,3)$, there exists
$$
\widehat{g} \in L^{\infty}(0,\rho_{M};L^2(\mathbb{R}))
$$
that is a fixed point of
 \beq\label{eq:gfixedpoint}
  \widehat{g}(\rho,\xi)=\int_0^\rho\tilde{K}(\rho,s,\xi)\F(\mathcal{S}(g))(s,\xi)\,\dd s.
  \eeq
  The remainder function $\widehat{g}(\rho,\xi)$ satisfies
  \beq\label{ineq:ghatL2bound}
  \|\widehat{g}(\rho,\cdot)\|_{L^2}+\|\rho\widehat{g}_\rho(\rho,\cdot)\|_{L^2}\leq C\rho^{1+\frac{3\theta}{2}}e^{\frac{\rho^{2\theta}}{2\theta}}.
  \eeq
\end{thm}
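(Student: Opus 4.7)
The plan is to construct $\widehat g$ as the unique fixed point of the integral operator
\[
T[\widehat g](\rho,\xi):=\int_0^\rho\tilde K(\rho,s;\xi)\,\mathcal{F}(\mathcal{S}(g))(s,\xi)\,\dd s
\]
by combining a local contraction mapping argument with a Gronwall-type a priori estimate. Using Proposition~\ref{prop:Pgexpression}, I would split $\mathcal{F}(\mathcal{S}(g))$ into the $g$-independent source $H_0(\rho)\widehat f_{\lambda+1}(k(\rho)\xi)+\eps^2 r(\rho,\xi)$ and the three linear-in-$g$ terms $-\beta(\rho)\widehat g$, $-\mathcal{F}(\eps(A-A_0)g_\rho)$, and $-\mathcal{F}(\eps Bv g_v)$. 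Because $\tilde K(\rho,\rho;\xi)=0$ by Lemma~\ref{lemma:Kbounds}, differentiating the integral equation in $\rho$ kills the boundary term and yields
\[
\widehat g_\rho(\rho,\xi)=\int_0^\rho\partial_\rho\tilde K(\rho,s;\xi)\,\mathcal{F}(\mathcal{S}(g))(s,\xi)\,\dd s,
\]
so $\widehat g$ and $\widehat g_\rho$ can be estimated side by side in the quantity $N(\rho):=\|\widehat g(\rho,\cdot)\|_{L^2_\xi}+\|\rho\widehat g_\rho(\rho,\cdot)\|_{L^2_\xi}$.

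For the source, Minkowski combined with $\|\tilde K(\rho,s;\cdot)\|_{L^2}\leq C\rho^{1-\theta/2}$ from Lemma~\ref{lemma:Kbounds}, the pointwise bound $|H_0(s)|\leq Cs^{-1+2\theta}$ coming from Lemma~\ref{lemma:k-asymptotics}, and the uniform $L^\infty_\xi$ boundedness of $\widehat f_{\lambda+1}$ produce
\[
\|T[0](\rho,\cdot)\|_{L^2_\xi}\leq C\rho^{1-\theta/2}\int_0^\rho s^{-1+2\theta}\,\dd s\leq C\rho^{1+\frac{3\theta}{2}},
\]
with a strictly smaller contribution from $\eps^2 r$ because of its extra decay in $\xi$; the matching bound on $\|\rho\widehat g_\rho\|_{L^2_\xi}$ follows by the same scheme applied to $\partial_\rho\tilde K$, using the Bessel-function identities behind Lemma~\ref{lemma:Kbounds}. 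For the $g$-dependent pieces, $\beta(\rho)\widehat g$ contributes a Gronwall-type kernel of size $|\beta(s)|\leq Cs^{2\theta-2}$; the $g_\rho$ term is absorbed by integration by parts in $s$, moving the derivative off $g$ onto $\tilde K$ and exploiting $\tilde K(\rho,\rho;\xi)=0$; and the $vg_v$ term, which in Fourier equals $-\widehat g-\xi\widehat g_\xi$, is treated by integration by parts in $\xi$ against $\xi\tilde K$, whose $L^\infty_\xi$ bound $\|\xi\tilde K(\rho,s;\cdot)\|_{L^\infty_\xi}\leq C\rho^{1-\theta}$ from Lemma~\ref{lemma:Kbounds} prevents any loss of derivatives, with the $v$-dependence of $A$ and $B$ decomposed as in Lemma~\ref{lemma:A,Bdecomposition}. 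Collecting the estimates yields
\[
N(\rho)\leq C\rho^{1+\frac{3\theta}{2}}+C\int_0^\rho s^{-1+2\theta}N(s)\,\dd s,
\]
and Gronwall's inequality then gives the claimed bound $N(\rho)\leq C\rho^{1+3\theta/2}e^{\rho^{2\theta}/(2\theta)}$. Existence of the fixed point is obtained by applying the same scheme to $T[\widehat g_1]-T[\widehat g_2]$ to get a contraction on a small interval $[0,\rho_*]$; continuation using the uniform a priori bound extends the solution to all of $[0,\rho_M]$.

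I expect the main obstacle to be the term $\mathcal{F}(\eps Bv g_v)$, since multiplication by $v$ in physical space becomes a $\xi$-derivative in Fourier, forcing simultaneous control of the weighted quantity $\xi\widehat g_\xi$, while the $v$-dependence of $B$ couples the three-piece decomposition from Lemma~\ref{lemma:A,Bdecomposition} with this nonlocal action. Overcoming this requires transferring the $\xi$-derivative onto $\tilde K$ via integration by parts, and it is precisely the sharp pointwise estimate $\|\xi\tilde K(\rho,s;\cdot)\|_{L^\infty_\xi}\leq C\rho^{1-\theta}$ in Lemma~\ref{lemma:Kbounds} that keeps the resulting terms controllable within the norm $N(\rho)$ and allows the Gronwall estimate to close.
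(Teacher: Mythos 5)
Your overall plan—split $\mathcal{S}(g)$ into the $g$-independent source and the three linear-in-$g$ terms, use the kernel bounds of Lemma~\ref{lemma:Kbounds} and the structure of Lemma~\ref{lemma:A,Bdecomposition}, integrate by parts to control the derivative terms, and close a Gronwall-type a priori estimate—is the right shape. The paper's proof is recognizably close: it runs a Picard iteration $\widehat g^{\,n+1}=T[\widehat g^{\,n}]$, estimates the increments with exactly these kernel bounds and the same integration by parts in $s$ and in $v$, and obtains factorial decay $\|\widehat G^{\,n+1}\|_{L^2}\le C\frac{(1/(2\theta))^n}{n!}\rho^{1+\frac{3\theta}{2}+2n\theta}$, which sums to the exponential. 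Your ``contraction on a small interval plus continuation via Gronwall'' packages the same estimates differently; both yield the result.

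Two points where your sketch does not quite close as written. First, the Gronwall inequality you announce, $N(\rho)\le C\rho^{1+\frac{3\theta}{2}}+C\int_0^\rho s^{-1+2\theta}N(s)\,\dd s$, does not follow from Lemma~\ref{lemma:Kbounds}. After integration by parts the kernel one actually extracts from the integral operator is of the form $\rho\,s^{-2+2\theta}$ (all three contributions in the paper's estimate are dominated by $\rho\,s^{-2+2\theta}$), not $s^{-1+2\theta}$; for $s<\rho$ the former is strictly larger, and $s^{-2+2\theta}$ is not even integrable near $0$ when $\gamma<2$. The fix is the same rescaling the paper performs: apply Gronwall to $\tilde N(\rho):=\rho^{-1}N(\rho)$, which satisfies $\tilde N(\rho)\le C\rho^{\frac{3\theta}{2}}+C\int_0^\rho s^{-1+2\theta}\tilde N(s)\,\dd s$ and hence $\tilde N(\rho)\le C\rho^{\frac{3\theta}{2}}e^{\rho^{2\theta}/(2\theta)}$, giving the stated bound.

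Second, the treatment of the $\eps Bvg_v$ term by ``integration by parts in $\xi$ against $\xi\tilde K$'' is not well posed as stated: after Fourier transforming, $\mathcal{F}(\eps B v g_v)=\mathcal{F}(\eps B)\ast(-\widehat g-\xi\widehat g_\xi)$ has the $\xi$-derivative sitting inside a convolution, and it does not pair cleanly with $\xi\tilde K(\rho,s;\xi)$. The cleaner route (and the one the paper takes) is to integrate by parts in the physical variable first, $\eps Bvg_v=(\eps Bvg)_v-(\eps Bv)_vg$, and only then Fourier transform; the $(\cdot)_v$ becomes a factor $i\xi$ that is absorbed by $\|\xi\tilde K(\rho,s;\cdot)\|_{L^\infty_\xi}\le C\rho^{1-\theta}$, while the zeroth-order piece pairs with $\|\tilde K\|_{L^\infty}$. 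This avoids having to control $\xi\widehat g_\xi$ inside the Gronwall quantity at all (the bound on $\xi\widehat g_\xi$ is recorded separately in Remark~\ref{rmk:xigxi}, by a parallel argument, and is not needed for the present theorem).
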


\begin{proof}
To establish the existence of the kernel, we argue with a constructive fixed point scheme.
Given the $n$th approximation $\widehat{g}^n(\rho,\xi)$,
we construct $\widehat{g}^{n+1}(\rho,\xi)$  by setting
\beqas
 \widehat{g}^{n+1}(\rho,v) :=& \int_0^{\rho}\tilde{K}(\rho,s;\xi)\F\big(\mathcal{S}(g^n)\big)\,\dd s\\
 =&\int_0^{\rho}\tilde{K}(\rho,s;\xi)\Big(H_0(s)\widehat{f}_{\lambda+1}(k(s)\xi)+\eps^2 r(s,\xi)\\
 &\qquad\qquad\quad\quad\,\,\, -\mathcal{F}(\eps(A-A_0)g^n_s-\eps Bvg^n_v-\beta(s)g^n)\Big)\,\dd s,
\eeqas
and begin the procedure with $\widehat{g}^0(\rho,\xi) = 0$.
To show that this scheme converges, we estimate the increment: $\widehat{g}^{n+1}-\widehat{g}^n$.
Set $\widehat{G}^{n+1} := \widehat{g}^{n+1}-\widehat{g}^{n}$.
By linearity of the Fourier transform, we have
\beqas
\widehat{G}^{n+1}(\rho,\xi)=\int_0^\rho\tilde{K}(\rho,s;\xi)\Big(-\mathcal{F}(\eps(A-A_0)G^n_s-\eps BvG^n_v-\beta(s)G^n)\Big)\,\dd s.
\eeqas
We first apply integration by parts (recalling from Lemma \ref{lemma:Kbounds} that $\tilde{K}(\rho,0;\xi)=\tilde{K}(\rho,\rho;\xi)=0$) to obtain
\beqa\label{eq:antelope}
&\widehat{G}^{n+1}(\rho,\xi)\\
  &=\int_0^{\rho}\tilde{K}(\rho,s;\xi)\Big(\F(-\eps(A-A_0)G_s^n(s,v))-\F(\eps B(s,v)vG^n_v(s,v))-\beta(s)\widehat{G}^n(s,v)\Big)\,\dd s\\
  &=\int_0^{\rho}\Big(\tilde{K}(\rho,s;\xi)\big(\F(\eps(A-A_0)_sG^n-\eps(BvG^n)_v+\eps(Bv)_vG^n-\beta(s){G}^n(s,v)\big)(s,\xi)\big)\\
  & \qquad\quad\,\,\,+\tilde{K}_s(\rho,s;\xi)\F(\eps(A-A_0)G^n)(s,\xi)\Big)\,\dd s.
\eeqa
We recall that the Fourier transform of a product is the convolution of the Fourier transforms.
We therefore use the bounds of Lemma \ref{lemma:Kbounds} and Young's inequality for convolutions to estimate, for a typical term,
\beqas
&\Big\|\int_0^{\rho}\tilde{K}(\rho,s;\xi)\F(\eps(A-A_0)_sG^n)\,\dd s\Big\|_{L^2_\xi}\\
&\leq \int_0^\rho\|\tilde{K}(\rho,s;\cdot)\|_{L^2}\|\F(\eps(A-A_0)_s)* \F(G^n)\|_{L^\infty_\xi}\,\dd s\\
&\leq \int_0^{\rho}\|\tilde{K}(\rho,s;\cdot)\|_{L^2}\|\eps\F((A-A_0)_s)(s,\cdot)\|_{L^2}\|\widehat{G}^n(s,\cdot)\|_{L^2}\,\dd s\\
&\leq \,C\eps^2\int_0^\rho \rho^{1-\frac{\th}{2}}s^{-2+\frac{5\th}{2}}\|\widehat{G}^n(s,\cdot)\|_{L^2}\,\dd s,
\eeqas
where we have estimated the $L^2$ norm of $\F((A-A_0)_s)$ by applying Plancherel's theorem and estimating
with the bounds of Lemma \ref{lemma:A,Bdecomposition} and the compact support of the function to see
\beqas
\|\eps(A-A_0)_s\|^2_{L^2_v}\leq C\int_{-k(s)}^{k(s)}\eps^4 s^{2\ga-6}\,\dd v
\leq
C\eps^4 s^{5\th-4}.
\eeqas
Treating the other terms of \eqref{eq:antelope} similarly, we find
\beqas
  \|\widehat{G}^{n+1}(\rho,\cdot)\|_{L^2}
  \leq C\int_0^{\rho}\Big(&\|\tilde{K}(\rho,s;\cdot)\|_{L^2}\|\F(\eps(A-A_0)_s)(s,\cdot)\|_{L^2}\|\widehat{G}^n(s,\cdot)\|_{L^2}\\
  &+\|\tilde{K}_s(\rho,s;\cdot)\|_{L^2}\|\F(\eps(A-A_0))(s,\cdot)\|_{L^2}\|\widehat{G}^n(s,\cdot)\|_{L^2}\\
  &+\|\xi\tilde{K}(\rho,s;\xi)\|_{L^{\infty}_\xi}\|\F(\eps BvG^n)(s,\cdot)\|_{L^2}\\
  &+\|\tilde{K}(\rho,s;\cdot)\|_{L^{\infty}}\|\F(\eps\partial_v(Bv)G^n)(s,\cdot)\|_{L^2}\\
  &+\|\tilde{K}(\rho,s;\cdot)\|_{L^{\infty}}|\beta(s)|\|\widehat{G}^n(s,\cdot)\|_{L^2}\Big)\,\dd s\\
  \leq C\int_0^{\rho}\big(&\eps^2\rho^{1-\frac{\theta}{2}}s^{-2+\frac{5\theta}{2}}+\eps\rho^{1-\theta}s^{-2+3\theta}+\rho s^{-2+2\theta}\big)\|\widehat{G}^n(s,\cdot)\|_{L^2}\,\dd s.
\eeqas
In particular, we have obtained
\begin{equation}\label{ineq:Gn+1}
 \rho^{-1}\|\widehat{G}^{n+1}(\rho,\cdot)\|_{L^2}\leq C\int_0^{\rho}s^{-1+2\theta}s^{-1}\|\widehat{G}^n(s,\cdot)\|_{L^2}\,\dd s,
\end{equation}
where $C>0$ is independent of $\rho$.

Before estimating $\widehat{G}^1(\rho,\xi)$, we note that, for $0<s\leq\rho$,
\begin{align}
  &\Big\|Q_\nu(\xi k(\rho))Q_\nu(\xi k(s))^{-1}R(\xi k(s))\Big|\frac{J_{\nu+1}(\xi k(s))}{(\xi k(s))^{\nu+1}}\Big|\,\Big\|_{L^2_\xi}\nonumber\\
 &\leq C\Big(\int_0^{\frac{1}{k(\rho)}}\Big(\frac{k(\rho)}{k(s)}\Big)^{2\nu}\,\dd \xi
   +\int_{\frac{1}{k(\rho)}}^{\frac{1}{k(s)}}k(\rho)^{-1}k(s)^{-2\nu}|\xi|^{-2\nu-1}\,\dd \xi\nonumber\\
 &\qquad\,\,\,\,+\int_{\frac{1}{k(s)}}^{\infty}k(\rho)^{-1}k(s)^{-2\nu-4}|\xi|^{-2\nu-5}\,\dd \xi\Big)^{\frac{1}{2}}\nonumber\\
 &\leq C\rho^{\frac{1-\theta}{2}}s^{-\frac{1}{2}}.\label{ineq:armadillo}
 \end{align}
It follows from Proposition \ref{prop:Pgexpression} that
$$
\|\tilde{K}(\rho,s;\xi)r(s,\xi)\|_{L^2_\xi}
\leq C\|\tilde{K}(\rho,s;\xi)H_0(s)\widehat{f}_{\lambda+1}(k(s)\xi)\|_{L^2_\xi}.
$$
Then we use $|H_0(\rho)|\leq C\rho^{-1+2\th}$ from Proposition \ref{prop:Pgexpression} and employ estimate \eqref{ineq:armadillo}
to calculate
\begin{align}
  \|\widehat{G}^1(\rho,\cdot)\|_{L^2}
  &\leq C\Big\|\Big(\frac{k(\rho)}{k'(\rho)}\Big)^{\frac{1}{2}}\int_0^{\rho}K(\rho,s;\xi)
    \Big(\frac{k(s)}{k'(s)}\Big)^{\frac{1}{2}}H_0(s)\frac{J_{\nu+1}(\xi k(s))}{(\xi k(s))^{\nu+1}}\,\dd s\Big\|_{L^2_\xi}\nonumber\\
  &\leq C\rho^{\frac{1}{2}}\int_0^{\rho}\Big\|Q_\nu(\xi k(\rho))Q_\nu(\xi k(s))^{-1}R(\xi k(s))s^{-\frac{1}{2}+2\theta}
     \Big|\frac{J_{\nu+1}(\xi k(s))}{(\xi k(s))^{\nu+1}}\Big|\,\Big\|_{L^2_\xi}\,\dd s\nonumber\\
  &\leq C\rho^{1-\frac{\th}{2}}\int_0^{\rho}s^{-1+2\theta}\,\dd s\nonumber\\
  &\leq C \rho^{1+\frac{3\theta}{2}}. \label{ineq:G1}
\end{align}
Thus, combining \eqref{ineq:Gn+1} and \eqref{ineq:G1}, we may bound iteratively
\beqas
 \rho^{-1}\|\widehat{G}^{n+1}(\rho,\xi)\|_{L^2_\xi} \leq& \int_{0}^{\rho}s_1^{-1 + 2\theta}s_1^{-1}\|\widehat{G}^n(s_1,\cdot)\|_{L^2}\,\dd s_1\\
 \leq&\, C\int_{0}^{\rho}\int_{0}^{s_1}\cdots\int_{0}^{s_{n-1}}(s_n\cdots s_1)^{-1 + 2\theta}s_n^{-1}\|\widehat{G}^1(s_n,\cdot)\|_{L^2}\,\dd s_n\cdots \dd s_1\\
 \leq& \,C \rho^{\frac{3\theta}{2}+2n\theta}\prod_{j=1}^{n}\frac{1}{\frac{3\theta}{2}+2j\theta}\\
 \leq&\, C\frac{(2\theta)^{-n}}{n!}\rho^{\frac{3\theta}{2}+2n\theta}.
\eeqas
Hence, we obtain
$$
\|\widehat{G}^{n+1}(\rho,\xi)\|_{L^2_\xi}\leq C\frac{(2\theta)^{-n}}{n!}\rho^{1+\frac{3\theta}{2}+2n\theta}.
$$
In particular,
$\widehat{g}(\rho,\xi):=\lim_{n\rightarrow\infty}\widehat{g}^n(\rho,\xi)=\lim_{n\rightarrow\infty}\sum_{k=1}^n\widehat{G}^k(\rho,\xi)$
exists and satisfies
$$
\|\widehat{g}(\rho,\cdot)\|_{L^2}\leq C\sum_{n=0}^{\infty}\frac{(2\theta)^{-n}}{n!}\rho^{1+\frac{3\theta}{2}+2n\theta}=C\rho^{1+\frac{3\theta}{2}}e^{\frac{\rho^{2\theta}}{2\theta}}.
$$
Moreover, $\widehat{g}$ is the desired fixed point.
In an analogous argument, we can obtain the estimate for $\rho \hat{g}_\rho$.
This completes the proof.
\end{proof}

\begin{rmk}\label{rmk:xigxi}
 A totally analogous argument yields that $\xi\widehat{g}_\xi\in L^2_\xi$ with
 $$
 \|\xi\widehat{g}_\xi\|_{L^2_\xi}\leq C\rho^{1+\frac{3\theta}{2}}e^{\frac{\rho_M^{2\theta}}{2\theta}}.
 $$
\end{rmk}

\begin{thm}[Regularity of $g(\rho,v)$]\label{thm:entropykernelregularity}
 The remainder function $g=g(\rho,v)$ is such that $\partial_v^\mu g$ is H\"{o}lder continuous in $(\rho,v)$ for $\rho>0$ for all $\mu$ with $0\leq\mu<\lambda+2$. In addition, if $0<\be<\mu$,
 $$
 |\partial_v^\be g(\rho,v)|\leq C\rho^{1+(1-2\mu+\beta)\th}[k(\rho)^2-v^2]_+^{\mu-\be}
\qquad \mbox{for all $\rho$ with  $0\leq\rho\leq\rho_M$ and $v\in\R$}.
 $$
\end{thm}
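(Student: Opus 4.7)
The plan is to convert regularity information about $\widehat g(\rho,\xi)$ from Theorem \ref{thm:entkernelexistence} into pointwise estimates on $\partial_v^\beta g$ in three stages: (i) bootstrap weighted $L^2$ Fourier bounds up to $\mu<\lambda+2$; (ii) translate these into Hölder continuity and an $L^\infty_v$ control via Sobolev embedding; (iii) combine with the compact support of $g$ coming from finite propagation speed to extract the boundary-vanishing factor $[k(\rho)^2-v^2]_+^{\mu-\beta}$.

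\textbf{Stage 1 (weighted Fourier bounds).} I would repeat the contraction scheme of Theorem \ref{thm:entkernelexistence} in the weighted norm $\|\cdot\|_{L^2_\xi}+\||\xi|^\mu\cdot\|_{L^2_\xi}$ to obtain, for every $0\leq\mu<\lambda+2$,
\[
\||\xi|^\mu \widehat g(\rho,\cdot)\|_{L^2_\xi}+\|\rho\,|\xi|^\mu \widehat g_\rho(\rho,\cdot)\|_{L^2_\xi}\leq C\rho^{1+(\tfrac{3}{2}-\mu)\theta}e^{\rho^{2\theta}/(2\theta)}.
\]
The crucial point is that the naive weighted kernel bound $\||\xi|^\mu \tilde K\|_{L^2_\xi}$ already diverges when $\mu\geq \tfrac12$ because $|\tilde K(\rho,s;\xi)|\lesssim |\xi|^{-1}$ at infinity; however, the source produced in Proposition \ref{prop:Pgexpression} decays like $|\widehat f_{\lambda+1}(k(s)\xi)|\lesssim (k(s)|\xi|)^{-\lambda-\tfrac32}$, so that $|\xi|^\mu\,\tilde K\cdot(\text{source})$ belongs to $L^2_\xi$ precisely up to $\mu<\lambda+2$. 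The perturbative contributions $\mathcal F(\eps(A-A_0)g_\rho)$, $\mathcal F(\eps Bv g_v)$, and $\beta\widehat g$ are absorbed exactly as in Theorem \ref{thm:entkernelexistence} via Plancherel and Young's inequality, using the compact support in $v$ and the $O(\rho^{\gamma-2})$ bounds of Lemma \ref{lemma:A,Bdecomposition}; the accumulated errors are closed by the same Gronwall-type iteration.

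\textbf{Stage 2 (Hölder continuity).} The one-dimensional Sobolev embedding $H^{\mu+\frac12+\delta}(\mathbb R)\hookrightarrow C^\delta(\mathbb R)$ together with the Stage 1 bounds yields Hölder continuity of $\partial_v^\mu g(\rho,\cdot)$ for each $\mu<\lambda+2$; the companion bound on $\rho\widehat g_\rho$ gives joint Hölder continuity in $(\rho,v)$. Stage 1 also provides the quantitative pointwise estimate
\[
\|\partial_v^\mu g(\rho,\cdot)\|_{L^\infty_v}\leq C\rho^{1+(1-\mu)\theta} \qquad \text{for } \mu<\lambda+\tfrac{3}{2},
\]
which is the input to Stage 3.

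\textbf{Stage 3 (boundary vanishing).} Because the entropy equation propagates with speed $k'(\rho)$ in $v$, the function $g(\rho,\cdot)$ — and hence each $\partial_v^\mu g(\rho,\cdot)$ — is supported in $[-k(\rho),k(\rho)]$. For $0<\beta<\mu<\lambda+2$, I would represent $\partial_v^\beta g(\rho,v)$ as a Riemann–Liouville fractional integral of order $\mu-\beta$ of $\partial_v^\mu g$, applied from whichever endpoint of the support lies closer to $v$. Since the integrand is uniformly bounded by Stage 2 and the effective range of integration has length at most $k(\rho)-|v|$, this produces
\[
|\partial_v^\beta g(\rho,v)|\leq C\bigl(k(\rho)-|v|\bigr)^{\mu-\beta}\rho^{1+(1-\mu)\theta}.
\]
Writing $k(\rho)-|v|=[k(\rho)^2-v^2]/(k(\rho)+|v|)\leq k(\rho)^{-1}[k(\rho)^2-v^2]$ and invoking $k(\rho)\asymp\rho^\theta$ from Lemma \ref{lemma:k-asymptotics} converts this to the claimed bound
\[
|\partial_v^\beta g(\rho,v)|\leq C\rho^{1+(1-2\mu+\beta)\theta}[k(\rho)^2-v^2]_+^{\mu-\beta}.
\]

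\textbf{Main obstacle.} The technical heart is Stage 1 in the range $\mu$ approaching $\lambda+2$: one has to exploit the precise power-law decay of both the kernel $\tilde K$ and the source at infinity in tandem, while verifying that the perturbative terms — whose Fourier transforms become convolutions — do not destroy the gain. The combination of compact support of $A-A_0$, $B$, and $\beta$ in $v$, Plancherel's identity, and the $\rho^{\gamma-2}$ decay from Lemma \ref{lemma:A,Bdecomposition} is exactly what makes each iteration step contribute only $O(\rho^{2\theta})$ and allows the contraction to close with the same exponential Gronwall factor as in Theorem \ref{thm:entkernelexistence}.
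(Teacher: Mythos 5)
Your plan has a genuine gap in the exponent range, and it sits at the junction between Stages 2 and 3. Stage 1 (if it closes as you state it) furnishes $\||\xi|^\mu\widehat g(\rho,\cdot)\|_{L^2_\xi}$ for $\mu<\lambda+2$. But to pass from a weighted $L^2_\xi$ bound to the quantitative $L^\infty_v$ control that Stage 3 requires, you must Cauchy--Schwarz against $(1+|\xi|)^{-1/2-\delta}$, which costs half a derivative; this is exactly why you correctly wrote the pointwise bound $\|\partial_v^\mu g(\rho,\cdot)\|_{L^\infty_v}\leq C\rho^{1+(1-\mu)\theta}$ only for $\mu<\lambda+\tfrac{3}{2}$. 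Stage 3, however, applies this pointwise bound for all $\mu<\lambda+2$ to extract the factor $[k(\rho)^2-v^2]_+^{\mu-\beta}$. That half-unit of range is missing. The same loss infects the H\"older claim in Stage 2: the embedding $H^{\mu+1/2+\delta}\hookrightarrow C^\delta$ applied to your Stage 1 bound delivers H\"older continuity of $\partial_v^\mu g$ only for $\mu<\lambda+\tfrac{3}{2}-\delta$, not $\mu<\lambda+2$ as you assert. Incidentally, your stated decay $|\widehat f_{\lambda+1}(k(s)\xi)|\lesssim (k(s)|\xi|)^{-\lambda-3/2}$ is not the sharp one: since $J_{\nu+1}(y)\sim y^{-1/2}$ at infinity, $\widehat f_{\lambda+1}(y)\sim|y|^{-\lambda-2}$, but this does not rescue the $L^2\to L^\infty$ conversion loss.

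The paper sidesteps this entirely by never passing through a weighted $L^2_\xi$ norm of $\widehat g$ for the high-$\mu$ regime. Instead it bounds $|\partial_v^\mu g(\rho,v)|\leq\int_\R|\xi|^\mu|\widehat g(\rho,\xi)|\,\dd\xi$ directly, substitutes the integral representation of $\widehat g$, and distributes the weight $|\xi|^\mu$ so that one power lands on $\tilde K$ — controlled via $\|\xi\tilde K(\rho,s;\cdot)\|_{L^\infty_\xi}\leq C\rho^{1-\theta}$ from Lemma~\ref{lemma:Kbounds} — while the remaining $|\xi|^{\mu-1}$ weights the source $H_0(s)\widehat f_{\lambda+1}(k(s)\xi)$, whose weighted $L^1_\xi$ norm is finite precisely for $\mu<\nu+\tfrac{3}{2}=\lambda+2$. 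Working in $L^1_\xi$ throughout avoids the half-unit sacrifice inherent in the $L^2_\xi\to L^\infty_v$ step. Your Stage 1 iteration would need to be redone in this $L^1_\xi$ framework (with the perturbative convolutions handled by Young's inequality, one factor in $L^1_\xi$ and one in $L^2_\xi$, exactly as in the paper's display (4.18a)) for the overall argument to reach the claimed threshold $\lambda+2$. Finally, a minor point: the one-sided Riemann--Liouville integral in Stage 3 produces a factor $(v+k(\rho))^{\mu-\beta}$ vanishing only at the left endpoint $v=-k(\rho)$; the paper instead invokes the H\"older bound $|\partial_z^\beta g|\leq\sup|\partial_z^\mu g|\,[1-z^2]_+^{\mu-\beta}$ in the rescaled variable $z=v/k(\rho)$, which handles both endpoints symmetrically.
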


\begin{proof}
Recalling the definition of fractional derivatives in \eqref{def:fractionalderivative},
we see that the fractional derivative of order $\mu$ of the remainder function $g(\rho,v)$
may be bounded by
$$
|\partial_v^\mu g(\rho,v)|\leq C\int_{\mathbb{R}}|\xi|^\mu|\widehat{g}(\rho,\xi)|\,\dd\xi.
$$
For $0\leq\mu<\frac{1}{2}$, we apply the same method as in the proof of Theorem \ref{thm:entkernelexistence}
with the bounds of Lemma \ref{lemma:Kbounds} to obtain
 \begin{align}
   \int_{\mathbb{R}}&|\xi|^\mu|\widehat{g}(\rho,\xi)|\,\dd\xi\nonumber\\
   \leq&\, C\int_{\mathbb{R}}|\xi|^\mu\int_0^\rho\Big|\tilde{K}(\rho,s;\xi)\Big(H_0(s)\Big|\frac{J_{\nu+1}(\xi k(s))}{(\xi k(s))^{\nu+1}}\Big|-\F\big(\eps(A-A_0)g_s\hspace{-0.2mm}-\hspace{-0.2mm}\eps Bvg_v\hspace{-0.2mm}-\hspace{-0.2mm}\beta(s)g\big)\Big)\Big|\,\dd s \dd\xi\nonumber \\
   \leq &\,C\int_0^{\rho}\big\||\xi|^{\mu}\tilde{K}(\rho,s;\xi)\big\|_{L^2_\xi}\Big(\Big\|H_0(s)\Big|\frac{J_{\nu+1}(\xi k(s))}{(\xi k(s))^{\nu+1}}\Big|\,\Big\|_{L^2_\xi}+\eps\|(A-A_0)_s\|_{L^{\infty}_v}\|\widehat{g}\|_{L^2_\xi} \nonumber \\
    &\hspace{38mm}+ |\beta(s)|\|\widehat{g}\|_{L^2_\xi}+\eps\|B(s,v)\|_{L^{\infty}_v}\big(\|\widehat{g}\|_{L^2_\xi}+\|\xi\widehat{g}_\xi\|_{L^2_\xi}\big)\Big)\,\dd s\nonumber\\
   &+C\int_0^{\rho}\big\||\xi|^{\mu}\tilde{K}_s(\rho,s;\xi)\big\|_{L^2_\xi}\|A-A_0\|_{L^{\infty}_v}\|\widehat{g}\|_{L^2_\xi}\,\dd s\nonumber \\
   \leq& \,C\int_0^\rho\rho^{1-\frac{\theta}{2}-\mu\theta}
     \Big(s^{-2+2\theta}s^{1+2\theta+\frac{3\theta}{2}}+\eps s^{-2+2\theta}s^{1+2\theta+\frac{3\theta}{2}} + s^{-1+\frac{3\theta}{2}}\Big)\,\dd s\nonumber\\
   \leq &\, C\rho^{1+\theta-\mu\theta}.
 \label{4.18a} \end{align}
Moreover, we observe that, for $0\leq\mu<\half$,
$$
\int_\R |\xi|^\mu|\xi\widehat{g}_\xi(\rho,\xi)|\,\dd \xi\leq C\rho^{1+\theta-\mu\theta}.
$$
To extend these inequalities to $\mu\geq\half$, we note that, for all $0\leq\mu<\nu + \frac{3}{2}$,
$$
\Big\||\xi|^{\mu-1}\frac{J_{\nu+1}(\xi k(s))}{(\xi k(s))^{\nu+1}}\Big\|_{L^1_\xi}\leq s^{1+2\theta-\mu\theta}.
$$
Distributing the powers of $\xi$ appropriately within the integral and estimating as \eqref{4.18a} in a straightforward way,
we obtain
$$
|\partial_v^\mu g(\rho,v)|\leq C\rho^{1+\theta-\mu\theta}
\qquad\mbox{for all $0<\mu<\la+2$}.
$$
Similarly, we have
$$
\rho|\partial_\rho\partial_v^\mu g(\rho,v)|\leq C\rho^{1+\theta-\mu\theta}
\qquad\mbox{for all $0<\mu<\la+2$}.
$$
Thus, by the standard embedding of the weighted Sobolev space $W^{1,p}_\rho\subset C^{0,\al}_\rho$,
we obtain that $\partial_v^\mu g(\rho,v)$ is H\"{o}lder continuous.

To conclude the proof,
we observe that $[1-z^2]_+^\mu$, for $z=\frac{v}{k(\rho)}$,
is positive on the support of $g(\rho,v)$, and
$$
|\partial_z^\mu g(\rho,v)|=k(\rho)^{\mu}|\partial_v^\mu g(\rho,v)|.
$$
Moreover, by the H\"older continuity above, we have
$$
|\partial_z^\be g|\leq \sup |\partial_z^\mu g|[1-z^2]_+^{\mu-\be}.
$$
We then calculate that, for $0<\be<\mu$,
\beqas
|\partial_v^\be g(\rho,v)|=&\,Ck(\rho)^{-\be}|\partial_z^\be g(\rho,v)|\\
\leq&\, C\rho^{-\be\th}\big(\sup_{z}|\partial_z^\mu g(\rho,v)|\big)[1-z^2]_+^{\mu-\be}\\
\leq& \,C\rho^{-\be\th+1+\th}[1-z^2]_+^{\mu-\be}\\
\leq& \,C\rho^{1+(1-2\mu+\beta)\th}[k(\rho)^2-v^2]_+^{\mu-\be}.
\eeqas
This completes the proof of Theorem \ref{thm:entropykernelregularity}, and hence Theorem \ref{thm:entropykernelmain}.
\end{proof}

\begin{thm}\label{thm:chipositive}
As $\eps\to0$, the relativistic entropy kernel $\chi(\rho,v)$ converges uniformly to the classical entropy kernel $\chi^*(\rho,v)$
on $\{\rho\leq\rho_M\}$.
In particular, in the $(k,v)$--coordinates,
$$
|\chi(k,v)-\chi^*(k,v)|\leq C\eps\chi^*(k,v),
$$
where $C$ depends only on $\rho_M$, which implies that, when $\eps$ is sufficiently small,
$$
\chi(k,v)\geq \half\, \chi^*(k,v)>0
$$
in the interior of its support $\{|v|\leq k\}$.
\end{thm}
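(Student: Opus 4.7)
The plan is to compare the expansion of the relativistic kernel $\chi$ from Theorem \ref{thm:entropykernelmain} term by term with the analogous expansion of the classical entropy kernel $\chi^*$. Formally setting $\eps=0$ throughout the construction of Section \ref{sec:entropykernel} yields the classical expansion
\begin{equation*}
\chi^*(\rho,v) = a_1^*(\rho)[k^*(\rho)^2 - v^2]_+^\lambda + a_2^*(\rho)[k^*(\rho)^2 - v^2]_+^{\lambda+1} + g^*(\rho,v),
\end{equation*}
where $k^*, a_i^*, g^*$ are the $\eps=0$ counterparts of $k, a_i, g$, and the relativistic entropy equation reduces continuously to $\chi^*_{\rho\rho}-k^*(\rho)^2\chi^*_{vv}=0$. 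Passing to the $(k,v)$--coordinates, both supports coincide with $\{|v|\leq k\}$, so the desired comparison reduces to estimating the three differences $a_i-a_i^*$ ($i=1,2$) and $g-g^*$ in this common frame.

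For the coefficients, I would first observe that $\tilde{a}(\rho)=O(\eps)$ and that $|k(\rho)-k^*(\rho)|\leq C\eps$ by direct comparison of the integrals defining them. Combined with the explicit formula $a_1=c_{*,\lambda}k^{-\lambda}k'^{-1/2}e^{\tilde a}$, this gives $|a_1(\rho)-a_1^*(\rho)|\leq C\eps$. The same estimate for $a_2$ follows from the integral formula \eqref{eq:a_2coefficient} by continuity of the integrand in $\eps$. Since $[k^2-v^2]_+^{\lambda+1}\leq k^2[k^2-v^2]_+^\lambda\leq C[k^2-v^2]_+^\lambda$ on $\{\rho\leq\rho_M\}$, the two explicit terms of the expansion contribute a discrepancy of at most $C\eps\,[k^2-v^2]_+^\lambda$.

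The main obstacle is the corresponding bound $|g(k,v)-g^*(k,v)|\leq C\eps\,[k^2-v^2]_+^\lambda$ for the remainder. I would consider the Fourier difference $\widehat G:=\widehat g-\widehat g^*$, which satisfies a linear integral equation of the same structure as in Theorem \ref{thm:entkernelexistence}, with a source collecting all the $\eps$-order discrepancies between the relativistic and classical versions of the kernel $\tilde K$, the source term $H_0(\rho)\widehat f_{\lambda+1}(k(\rho)\xi)$, and the operator $\mathcal S$. Each such discrepancy carries an explicit factor of $\eps$, since $A$ and $B$ enter multiplied by $\eps$ in \eqref{eq:entropyequation} and $\tilde a=O(\eps)$. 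Rerunning the iterative scheme of Theorem \ref{thm:entkernelexistence} on this linear problem then yields $\|\widehat G(\rho,\cdot)\|_{L^2}\leq C\eps\,\rho^{1+3\theta/2}$, and the fractional-derivative plus H\"older-embedding argument of Theorem \ref{thm:entropykernelregularity}, applied verbatim to $G$, upgrades this to the desired pointwise bound with the sharp boundary weight $[k^2-v^2]_+^\lambda$.

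Combining the three estimates with the leading-order behaviour $\chi^*(k,v)\sim a_1^*(k)[k^2-v^2]_+^\lambda$ in the interior of the support yields $|\chi(k,v)-\chi^*(k,v)|\leq C\eps\,\chi^*(k,v)$ on $\{\rho\leq\rho_M\}$. Strict positivity of $\chi^*$ on the interior of $\{|v|\leq k\}$ is a classical fact (see, e.g., \cite{ChenLeFloch}), so choosing $\eps$ small enough that $C\eps\leq\tfrac12$ gives $\chi(k,v)\geq\tfrac12\chi^*(k,v)>0$ there, completing the proof.
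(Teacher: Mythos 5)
Your proposal matches the paper's proof in all essentials: you compare the coefficients $a_i$ with their classical ($\eps=0$) counterparts, then bound $\widehat g - \widehat{g^*}$ by exploiting the explicit $O(\eps)$ factors in the discrepancies between $\tilde K, \mathcal S$ and $\tilde K^*, \mathcal S^*$, and rerun the fixed-point iteration and fractional-derivative/H\"older machinery of Theorems~\ref{thm:entkernelexistence} and \ref{thm:entropykernelregularity}. The paper's argument is essentially the same, stated a bit more tersely; your explicit note that the comparison is performed in the $(k,v)$--frame, where the two supports coincide, is a helpful clarification of what the statement actually means.
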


\begin{proof}
It is clear that, as $\eps\rightarrow0$,
 $$
 k(\rho):=\int_{0}^{\rho}\frac{\sqrt{p'(s)}}{s + \eps p(s)}\,\dd s\longrightarrow k^*(\rho):=\int_0^\rho\frac{\sqrt{p'(s)}}{s}\,\dd s
 \qquad \mbox{uniformly in $\rho\in [0, \rho_M]$}.
 $$
 Also, from the expressions given in \S\ref{sec:entropykernelcoeffs}, we see that coefficients $a_i$, $i=1,2$, converge uniformly to
 their classical counterparts ($a_\sharp$ and $a_\flat$ in the notation of \cite{ChenLeFloch}).
 Thus, it suffices to show that the remainder function $g(\rho,v)$, determined in \S\ref{sec:entropykernelproofs},
 converges to the remainder function $g_*$ of the classical kernel.

 Recall that $g$ is defined as the fixed point of
 $$
 \widehat{g}(\rho,\xi)=\int_0^{\rho}\tilde{K}(\rho,s;\xi)\F(\mathcal{S}(g))(s,\xi)\,\dd s,
 $$
 where $\tilde{K}$ is defined in \eqref{eq:Ktilde}, $\mathcal{S}$ is defined in Proposition \ref{prop:Pgexpression},
 and $g^*$ is defined as the fixed point of
 $$
 \widehat{g^*}(\rho,\xi)=\int_0^{\rho}\tilde{K^*}(\rho,s;\xi)\F(\mathcal{S^*}(g^*))(s,\xi)\,\dd s,
 $$
 where $\tilde{K^*}$ and $\mathcal{S^*}$ can be found in \cite{ChenLeFloch2}.

From the expressions for $\tilde{K^*}$ and $\mathcal{S^*}$ in \cite{ChenLeFloch2}, we see that the difference, $\tilde{K}-\tilde{K^*}$, satisfies
$$
|\tilde{K}(\rho,s;\xi)-\tilde{K^*}(\rho,s;\xi)|
\leq C\eps\Big(\frac{k(\rho)}{k'(\rho)}\Big)^\half\Big(\frac{k(s)}{k'(s)}\Big)^\half Q_\nu(\xi k(\rho))Q_\nu(\xi k(s))^{-1}R(\xi k(s))
$$
for $0\leq s\leq\rho$ and $\xi\in\R$.
Similarly,
$$
|\F(\mathcal{S}(g)-\mathcal{S^*}(g^*))|\leq C\eps |\widehat{f}_{\la+1}(\xi k(\rho))|\rho^{-1+2\th}+\eps^2 |r(\rho,\xi)|.
$$
Using
 \begin{align*}
 \widehat{g}(\rho,\xi)-\widehat{g^*}(\rho,\xi)
 =&\int_0^{\rho}\tilde{K}(\rho,s;\xi)\F(\mathcal{S}(g))(s,\xi)\,\dd s
   -\int_0^{\rho}\tilde{K^*}(\rho,s;\xi)\F(\mathcal{S^*}(g^*))(s,\xi)\,\dd s\nonumber\\
   =&\int_0^{\rho}\big(\tilde{K}-\tilde{K^*}\big)(\rho,s;\xi)\F(\mathcal{S}(g))(s,\xi)\,\dd s\nonumber\\
   &+\int_0^{\rho}\tilde{K^*}(\rho,s;\xi)\F(\mathcal{S}(g)-\mathcal{S^*}(g^*))(s,\xi)\,\dd s,
\end{align*}
we may argue as in the proof of Theorem
\ref{thm:entropykernelregularity} to conclude the bound we want.
\end{proof}

\section{The Weak Entropy-Flux Kernel}\label{sec:entropyfluxkernel}
With Theorem \ref{thm:entropykernelmain}, thereby demonstrating the existence of the entropy kernel,
we now move on to the entropy-flux kernel and Theorem \ref{thm:entropyfluxkernelmain}.

To this end, we consider the expressions derived for general entropy pairs:
\beqas
q_{\rho} =\frac{u(1-\eps p')}{1-\eps^2 p' u^2}\eta_{\rho} + \frac{p'(1-\eps u^2)^2}{(1-\eps^2 p'u^2)(\rho + \eps p)}\eta_u,\quad
q_u =\frac{\rho + \eps p}{1 -\eps^2 p'u^2}\eta_{\rho} + \frac{u(1-\eps p')}{1 - \eps^2 p'u^2}\eta_u.
\eeqas
Changing the variables to $(\rho,v)$ and
setting $(\tilde{\rho}, \tilde{u}):=(\frac{(\rho + \eps p)(1-\eps u^2)}{1 -\eps^2 p'u^2}, \frac{u(1-\eps p')}{1-\eps^2 p' u^2})$ yield
\beqas
q_{\rho} = \,\tilde{u}\eta_{\rho} + \tilde{\rho}k'^2\eta_v,\quad q_v = \,\tilde{\rho}\eta_{\rho} + \tilde{u}\eta_v.
\eeqas

\begin{lemma}\label{lemma:utilderhotildeexpansions}
$\tilde{\rho}(\rho,v)$ and $\tilde{u}(\rho,v)$ can be expanded as
\begin{align}
&\tilde{\rho}(\rho,v)=\rho_0(\rho)+\eps\rho_1(\rho)(k(\rho)^2-v^2)+\eps^2\rho_2(\rho,v)(k(\rho)^2-v^2)^2,\label{eq:tilderhoexpansion}\\
&\tilde{u}(\rho,v)=v\big(u_0(\rho)+\eps u_1(\rho)(k(\rho)^2-v^2)+\eps^2u_2(\rho,v)(k(\rho)^2-v^2)^2\big),\label{eq:tildeuexpansion}
\end{align}
with
\begin{align*}
  \rho_0(\rho)=\frac{(\rho+\eps p)(1-\eps u(k)^2)}{1-\eps^2 p'u(k)^2},\qquad
  u_0(\rho)=\frac{u(k)(1-\eps p')}{k(1-\eps^2 p'u(k)^2)},
\end{align*}
 and
$$
|\rho_1(\rho)|+|\rho_2(\rho,v)|\leq C\big(\rho+\eps p(\rho)\big),\quad
|u_1(\rho)|+|u_2(\rho,v)|\leq C,
$$
where $C>0$ is independent of $\eps>0$.
\end{lemma}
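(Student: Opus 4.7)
The key structural observation is that $\tilde{\rho}$ depends on $v$ only through $u(v)^2$, while $\tilde{u}$ is $u(v)(1-\eps p')/(1-\eps^2 p' u^2)$, which is odd in $v$ divided by an even function. Using the explicit formula $u(v) = \eps^{-1/2}\tanh(\sqrt{\eps}v)$, one checks that $u^2$ is a smooth function of $\xi := v^2$ (since $\tanh^2(\sqrt{\eps v^2})$ expands as $\eps v^2 - \tfrac{2}{3}\eps^2 v^4 + \cdots$), and that $H(\xi) := u(v)/v = \tanh(\sqrt{\eps\xi})/\sqrt{\eps\xi}$ is smooth in $\xi$ with $H(0)=1$. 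Hence, $\tilde{\rho}(\rho,v) = \Phi(\rho,\xi)$ and $\tilde{u}(\rho,v) = v\, G(\rho,\xi)$ for functions $\Phi$ and $G$ smooth in $\xi$ on the relevant range.

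The plan is to Taylor expand $\Phi$ and $G$ in $\xi$ around the point $\xi = k(\rho)^2$, using the integral remainder formula to second order:
\[
\Phi(\rho,\xi) = \Phi(\rho,k^2) + (\xi - k^2)\,\partial_\xi \Phi(\rho,k^2) + (\xi-k^2)^2 \int_0^1 (1-\tau)\,\partial_\xi^2 \Phi(\rho, k^2 + \tau(\xi-k^2))\,\dd\tau,
\]
and analogously for $G$. Substituting $\xi = v^2$, and setting $\rho_0(\rho) := \Phi(\rho,k(\rho)^2)$, $u_0(\rho) := G(\rho,k(\rho)^2)$, $\eps \rho_1(\rho) := -\partial_\xi\Phi(\rho, k^2)$, $\eps u_1(\rho) := -\partial_\xi G(\rho, k^2)$, and the remainder integrals as $\eps^2 \rho_2, \eps^2 u_2$ (after extracting the anticipated powers of $\eps$), yields the required expansions \eqref{eq:tilderhoexpansion}--\eqref{eq:tildeuexpansion}. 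Direct substitution confirms that $\rho_0$ and $u_0$ agree with the stated closed-form expressions.

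The crucial technical step is to verify that each application of $\partial_\xi$ produces one extra factor of $\eps$. For $\Phi = (\rho + \eps p)(1-\eps u^2)/(1-\eps^2 p' u^2)$, a direct computation gives
\[
\partial_{u^2}\!\left(\tfrac{1-\eps u^2}{1-\eps^2 p' u^2}\right) = \tfrac{\eps(\eps p' - 1)}{(1-\eps^2 p' u^2)^2},
\qquad
\partial_{u^2}^2\!\left(\tfrac{1-\eps u^2}{1-\eps^2 p' u^2}\right) = \tfrac{2\eps^3 p'(\eps p'-1)}{(1-\eps^2 p' u^2)^3},
\]
so that each $\partial_{u^2}$ supplies an extra $\eps$. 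Combined with $\partial_\xi u^2 = 1 + O(\eps)$ and $\partial_\xi^2 u^2 = O(\eps)$ (from the Taylor series of $\tanh^2$), this gives $|\partial_\xi \Phi|, \eps^{-1}|\partial_\xi^2 \Phi| = O\!\bigl(\eps(\rho + \eps p)\bigr)$, uniformly on $\rho \le \rho_M$, where $1 - \eps^2 p' u^2$ is bounded below by a positive constant (since $p'\eps < 1$ and $\eps u^2 < 1$). The analogous calculation for $G$, using $H(\xi) = 1 - \tfrac{\eps\xi}{3} + O(\eps^2 \xi^2)$ together with the above identities, shows $|\partial_\xi G|, \eps^{-1}|\partial_\xi^2 G| = O(\eps)$. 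These bounds then translate into the stated uniform bounds $|\rho_1| + |\rho_2| \le C(\rho+\eps p)$ and $|u_1| + |u_2| \le C$ with constants independent of $\eps$.

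The main obstacle is bookkeeping the $\eps$-powers cleanly: one must be careful that no hidden factors of $\eps^{-1}$ enter through $v^{-1}$ when factoring out $v$ from $\tilde{u}$, which is why the reformulation via $H(\xi)$ (whose smoothness at $\xi = 0$ removes the apparent $v^{-1}$ singularity) is essential. Once this is set up, the rest is a mechanical verification of the derivative bounds, uniform over the admissible region $\rho \le \rho_M < \rho_{\max}^\eps$ and $|v| \le $ any fixed bound.
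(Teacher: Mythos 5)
Your proof is correct and follows essentially the same route as the paper: the paper's proof is a one-line citation to the proof of Lemma~\ref{lemma:A,Bdecomposition}, namely a Taylor expansion of the coefficient functions in $v^2$ around $v^2 = k(\rho)^2$ with integral remainder, which is exactly what you carry out. Your additional care in factoring $\tilde{u} = v\,G(\rho,\xi)$ through the function $H(\xi) = \tanh(\sqrt{\eps\xi})/\sqrt{\eps\xi}$ (smooth at $\xi = 0$) to avoid a spurious $v^{-1}$ singularity, and the explicit tracking that each $\partial_\xi$ applied to the rational factors supplies a factor of $\eps$, are correct elaborations of the same argument that the paper leaves implicit.
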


The proof of this lemma is the same as that of Lemma \ref{lemma:A,Bdecomposition}, by taking a Taylor expansion in $v^2$ around $k(\rho)^2$.

We now need to derive an equation for the entropy-flux kernel.
We recall the operator:
$$
\mathbf{L}=\partial_{\rho\rho}-k'^2\partial_{vv} + \eps A\partial_\rho + \eps Bv\partial_v
$$
and observe that, by definition (see \eqref{eq:A,Bintro}), $\eps A=\frac{\tilde{\rho}_\rho-\tilde{u}_v}{\tilde{\rho}}$
and $\eps Bv=\frac{\tilde{u}_\rho-k'^2\tilde{\rho}_v}{\tilde{\rho}}$.
We may therefore calculate
\begin{align}
\mathbf{L}(\sigma-\tilde{u}\chi)=&\,\sigma_{\rho\rho}-k'^2\sigma_{vv} + \eps A\sigma_\rho + \eps Bv\sigma_v -2\tilde{u}_\rho\chi_\rho-\tilde{u}\chi_{\rho\rho}+2k'^2\tilde{u}_v\chi_v \nonumber\\
   &+ k'^2\tilde{u}\chi_{vv}- \eps A\tilde{u}\chi_\rho - \eps Bv\tilde{u}\chi_v-\mathbf{L}(\tilde{u})\chi\nonumber\\
=&-\tilde{u}_\rho\chi_\rho + 2k'k''\tilde{\rho}\chi_v + k'^2\tilde{\rho}_\rho\chi_v-k'^2\tilde{\rho}_v\chi_\rho + k'^2\tilde{u}_v\chi_v\nonumber\\
   &+ \eps Ak'^2\tilde{\rho}\chi_v + \eps Bv\tilde{\rho}\chi_\rho - \mathbf{L}(\tilde{u})\chi\nonumber\\
=&\,2k'k''\tilde{\rho}\chi_v+2k'^2\tilde{\rho}_\rho\chi_v-2k'^2\tilde{\rho}_v\chi_\rho-\mathbf{L}(\tilde{u})\chi\nonumber\\
=&\,\tilde{F}(\rho,v), \label{eq:fluxequationrighthandside}
\end{align}
as claimed in \eqref{eq:modifiedentropyfluxequation}.
Defining $F(\rho,v):=\tilde{F}(\rho,v)+\mathbf{L}(\tilde{u}\chi)$, we derive \eqref{eq:entropyfluxequation}.

\subsection{The coefficients for the entropy-flux kernel}\label{sec:entropyfluxkernelcoeffs}
As  with the entropy kernel in \S \ref{sec:entropykernelcoeffs},
we now derive the expressions for the coefficients of the entropy-flux kernel, $b_1(\rho)$ and $b_2(\rho)$.

Expanding $\tilde{F}(\rho,v)$ in the coefficients of $G_{\la}(\rho,v)$ (recall from \S \ref{sec:entropykernelcoeffs}
that $G_\la(\rho,v)=[k(\rho)^2-v^2]_+^\la$), we find
\begin{align}
\mathbf{L}(\sigma-\tilde{u}\chi)(\rho,v)
=&\,-4\la vk'(k'\rho_0)'a_1G_{\la-1}(\rho,v)\nonumber\\
&\,-4vG_{\la}(\rho,v)\big((\la+1)k'(k'\rho_0)'a_2+\la\eps k'(k'\rho_1)'a_1-\eps k'^2\rho_1a_1'+\frac{1}{4}\om a_1\big)\nonumber\\
&\,+v\tilde{f}(\rho,v),\label{eq:firstFexpansion}
\end{align}
where
\beqs
\om(\rho) =u_0''+\eps A_0u_0'+\eps B_0u_0 +4\eps kk'u_1'
+2\eps\big(4 k'^2 + kk''+\eps A_0kk'-\eps B_0k^2\big)u_1,
\eeqs
and $\tilde{f}(\rho,v)$ is a more regular term satisfying the bound:
$$
|\tilde{f}(\rho,v)|\leq C\rho^{2\th-2}G_{\la+1}.
$$
Moreover, we note from the asymptotics for $k(\rho)$ in Lemma \ref{lemma:k-asymptotics} and
the bounds for $a_1(\rho)$ and $a_2(\rho)$ in Theorem \ref{thm:entropykernelmain} that
the coefficients of $G_{\la-1}(\rho,v)$ and $G_\la(\rho,v)$ also satisfy the bounds of form $C\rho^{2\th-2}$.

We are now looking for the entropy-flux kernel in the form:
$$
(\sigma-\tilde{u}\chi)(\rho,v)=-v\big(b_1(\rho)G_\la(\rho,v)+b_2(\rho)G_{\la+1}(\rho,v)\big)+h(\rho,v).
$$
We write $b_i(\rho)=W_i(\rho)a_i(\rho)$, $i=1,2$, below to emphasize the relation between coefficients $b_i(\rho)$ and $a_i(\rho)$.
Applying operator $\mathbf{L}$ to this ansatz, we have
\begin{align}
\mathbf{L}(\sigma-\tilde{u}\chi)
=&\,-4\lambda v a_1k'(kW_1)' G_{\la-1}(\rho,v)\nonumber\\
&\,+G_\la(\rho,v)v\big(-W_1''a_1-2W_1'a_1'-4(\lambda+1)kk'a_2(W_2'+\frac{k'}{k}W_2)\nonumber\\
&\,\qquad\qquad\qquad +(W_2-W_1)a_1''-\eps A_0W_1'a_1-\eps B_0W_1a_1+\eps A_0(W_2-W_1)a_1'\nonumber\\
&\,\qquad\qquad\qquad +2\lambda \eps B_0(W_2-W_1)a_1-2\lambda \eps^2 kk'A_1a_1(W_1-W_2)\nonumber\\
&\,\qquad\qquad\qquad +2\lambda\eps^2 (W_1-W_2)B_1a_1k^2\big)\nonumber\\
&\quad +G_{\la+1}(\rho,v)v\big(-W_2''a_2-2W_2'a_2'-W_2a_2''-2\lambda \eps^3 A_2W_1a_1kk'\nonumber\\
&\qquad\qquad\qquad\qquad  +2\lambda \eps^3 B_2v^2W_1a_1-\eps (A-A_0)W_1'a_1-\eps (A-A_0)W_1a_1'\nonumber\\
&\qquad\qquad\qquad\qquad -2(\lambda+1)\eps (A-A_0)W_2a_2kk'-2\lambda\eps (B-B_0)W_1a_1\nonumber\\
&\qquad\qquad\qquad\qquad  +2(\lambda+1)\eps (B-B_0)v^2W_2a_2-\eps AW_2'a_2-\eps AW_2a_2'\nonumber\\
&\qquad\qquad\qquad\qquad  -\eps BW_2a_2-2(\lambda+1)\eps B_0W_2a_2\big)\nonumber\\
&\quad +\mathbf{L}(h). \label{eq:secondFexpansion}
\end{align}
Matching the coefficients in the terms of $G_{\la-1}(\rho,v)$
in \eqref{eq:firstFexpansion} and \eqref{eq:secondFexpansion}, $W_1(\rho)$ must solve
$$
(kW_1)'(\rho)=(k'\rho_0)'(\rho).
$$
This has the solution
\beq\label{eq:V_1coeff}
W_1(\rho)=\frac{k'(\rho)}{k(\rho)}\rho_0(\rho).
\eeq
For coefficient $W_2$, comparing the next most singular terms yields
\beqas
W_2'(\rho)+\frac{k'(\rho)}{k(\rho)}W_2(\rho)-\frac{\tilde{W}(\rho)}{4(\lambda+1)k(\rho)k'(\rho)a_2(\rho)}W_2(\rho)
=\frac{\tilde{\Omega}(\rho)}{4(\lambda+1)k(\rho)k'(\rho)a_2(\rho)},
\eeqas
where $\tilde{W}(\rho)$ is defined as in \eqref{eq:W}, and $\tilde{\Om}$ is given by
\begin{align}
  \tilde{\Om}=&\,-W_1''a_1-2W_1'a_1'-W_1\tilde{W}-\eps A_0W_1'a_1 -\eps B_0W_1a_1\nonumber\\
   &\,+4(\la+1)k'(k'\rho_0)'a_2+4\la\eps k'(k'\rho_1)'a_1-4\eps k'^2\rho_1a_1'+\om a_1.\label{eq:Omegatilde}
 \end{align}
Using the expressions for $(W_1,\tilde{u},\tilde{\rho})$ in \eqref{eq:V_1coeff} and Lemma \ref{lemma:utilderhotildeexpansions},
and applying once again Lemma \ref{lemma:k-asymptotics}, we see that $\tilde{\Omega}(\rho)=O\left(\rho^{2\th-2}\right)$.

From \eqref{eq:a_2coefficient} and the expression for $a_2(\rho)$,  we have
$$
-\frac{\tilde{W}(\rho)}{4(\lambda+1)k(\rho)k'(\rho)a_2(\rho)}
=\frac{\dd}{\dd\rho}\Big(\log\int_0^\rho e^{-\tilde{a}(\tau)}k(\tau)^\la k'(\tau)^{-\half}\tilde{W}(\tau)\,\dd\tau\Big).
$$
Now we define an integrating factor:
\beqas
I(\rho)=&\,\exp\Big\{\int^\rho\Big( \frac{k'(s)}{k(s)}+\frac{\dd }{\dd s}\Big(\log\int_0^s e^{-\tilde{a}(\tau)}k(\tau)^\la k'(\tau)^{-\half}\tilde{W}(\tau)\,\dd \tau\Big)\Big)\,\dd s\Big\}\\
 =&\, k(\rho)\int_0^\rho e^{-\tilde{a}(\tau)}k(\tau)^\la k'(\tau)^{-\half}\tilde{W}(\tau)\,\dd \tau.
\eeqas
One may check $I(\rho)=O(\rho^\theta)$ as $\rho\to 0$.
Thus, $\frac{\tilde{\Omega}(\rho)}{4(\lambda+1)k(\rho)k'(\rho)a_2(\rho)}I(\rho)=O(\rho^{-1+\theta})$ as $\rho\to0$, and hence is integrable.
Therefore, we obtain
\beq\label{eq:V_2coeff}
W_2(\rho)=I(\rho)^{-1}\int_0^{\rho}\frac{\tilde{\Omega}}{4(\lambda+1)kk'a_2}(s)I(s)\,\dd s.
\eeq
It is simple to verify that $|W_2(\rho)|\leq C$, where $C$ depends only on $\rho_M$.

\subsection{Proof of the existence and regularity of the entropy-flux kernel}\label{sec:entropyfluxkernelproofs}
As for the remainder function in the expansion of the entropy kernel,
we derive an equation for $\tilde{\mathbf{L}}h$,
where $\tilde{\mathbf{L}}$ is defined as in \eqref{def:P}.

\begin{prop}
The remainder function $h(\rho,v)$ satisfies
 \begin{align*}
  \F(\tilde{\mathbf{L}}h)(\rho,\xi)
  =&\,\mathcal{F}({\mathcal{T}(g)})(\rho,\xi)\\
  :=&-\mathcal{F}(\eps(A-A_0)g_\rho)-\mathcal{F}(\eps Bvg_v)-\beta(\rho)\hat{g}+H_1(\rho)k(\rho)\xi\widehat{f}_{\lambda+2}(k(\rho)\xi)
    +\eps^2 r(\rho,\xi),
\end{align*}
where $H_1(\rho)=O(\rho^{-1+2\theta})$ as $\rho\rightarrow0$
and $r(\rho,\xi)=O(\rho^{-1+2\th}(k(\rho)|\xi|)^{-\la-1-\al-\half})$ as $\xi\to\infty$, for some $\al>0$.
\end{prop}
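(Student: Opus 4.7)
The proof will parallel that of Proposition~\ref{prop:Pgexpression} for the entropy kernel. The starting point is the identity $\mathbf{L}(\sigma-\tilde u\chi)=\tilde F(\rho,v)$, with $\tilde F$ expanded in the coefficients of $G_\nu(\rho,v):=[k(\rho)^2-v^2]_+^\nu$ as in \eqref{eq:firstFexpansion}, and the action of $\mathbf{L}$ on the ansatz $-v(b_1G_\lambda+b_2G_{\lambda+1})$ already computed in \eqref{eq:secondFexpansion}. By construction of $W_1,W_2$ via \eqref{eq:V_1coeff}--\eqref{eq:V_2coeff}, the $vG_{\lambda-1}$ and $vG_\lambda$ coefficients cancel between the two expansions, so only a $vG_{\lambda+1}$ principal source and higher-regularity corrections remain on the right. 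After collecting these terms one reaches an equation of the form
\begin{equation*}
\mathbf{L}h \;=\; H_1(\rho)\,vG_{\lambda+1}(\rho,v) + \eps^2\tilde s(\rho,v),
\end{equation*}
where $H_1$ is built from $a_i,W_i,k,k',k'',A_0,B_0$ together with $\rho_0,\rho_1,u_0,u_1$ from Lemma~\ref{lemma:utilderhotildeexpansions}, and $\tilde s$ gathers the $A_2,B_2,\rho_2,u_2$ contributions, each of which carries two additional factors of $[k^2-v^2]_+$ coming from the expansions of Lemmas~\ref{lemma:A,Bdecomposition} and \ref{lemma:utilderhotildeexpansions}.

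Next I isolate the principal part exactly as in \S\ref{sec:entropykernelproofs}: since $\mathbf{L}-\tilde{\mathbf{L}}=\eps(A-A_0)\partial_\rho+\eps Bv\partial_v+\beta(\rho)$ by the definition \eqref{def:P}, the equation rewrites as
\begin{equation*}
\tilde{\mathbf{L}}h \;=\; H_1(\rho)\,vG_{\lambda+1} - \eps(A-A_0)h_\rho - \eps Bvh_v - \beta(\rho)h + \eps^2\tilde s,
\end{equation*}
and I take the Fourier transform in $v$. Using $\partial_v G_{\lambda+2}=-2(\lambda+2)vG_{\lambda+1}$ together with the identity $\F(G_\mu)(\xi)=k(\rho)^{2\mu+1}\widehat f_\mu(k(\rho)\xi)$ recorded in \S\ref{sec:entropykernelproofs}, one obtains $\F(vG_{\lambda+1})(\xi)\propto k(\rho)\xi\,\widehat f_{\lambda+2}(k(\rho)\xi)$ up to a constant that is absorbed into $H_1$. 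This produces the advertised principal Fourier source, while the $(A-A_0)$, $Bv$ and $\beta$ contributions yield precisely the three remaining pieces of $\F(\mathcal T(h))$, exactly mirroring the self-referential fixed-point structure of Proposition~\ref{prop:Pgexpression}.

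To pin down the size of $H_1$, I substitute the explicit formulas for $a_1,a_2$ from \S\ref{sec:entropykernelcoeffs} and for $W_1,W_2$ from \S\ref{sec:entropyfluxkernelcoeffs}, and then apply Lemma~\ref{lemma:k-asymptotics} ($k\sim\rho^\theta$, $k'\sim\theta\rho^{\theta-1}$, $k''\sim\theta(\theta-1)\rho^{\theta-2}$) together with the $\rho^{\gamma-2}=\rho^{2\theta-1}$ bounds for $A_0,\rho B_0$ from Lemma~\ref{lemma:A,Bdecomposition} and the bounds for $\rho_i,u_i$ from Lemma~\ref{lemma:utilderhotildeexpansions}; the conclusion is that each surviving monomial in $H_1$ is $O(\rho^{-1+2\theta})$ as $\rho\to 0$. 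For $\eps^2 s(\rho,\xi)$ I note that every term in $\tilde s$ is supported in $\{|v|\le k(\rho)\}$ and factors as a coefficient of order $\rho^{-1+2\theta}$ times some $G_{\lambda+1+\alpha}$, with $\alpha\ge 1$ because the $A_2,B_2,\rho_2,u_2$ pieces always enter with $[k^2-v^2]_+^2$; the Fourier transform of $G_{\lambda+1+\alpha}$ equals $C(k(\rho)/\xi)^{\lambda+3/2+\alpha}J_{\lambda+3/2+\alpha}(k(\rho)\xi)$, hence decays like $(k(\rho)|\xi|)^{-\lambda-1-\alpha-1/2}$ for large $\xi$, which is the stated bound.

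The main technical obstacle is the term-counting needed to check that the coefficient of $vG_{\lambda+1}$ really is only $O(\rho^{-1+2\theta})$ rather than more singular: one must verify that the leading $\rho^{2\theta-3}$ contributions coming from $W_1''a_1$, $W_2''a_2$, and the cross-terms $2W_i'a_i'$ cancel once the ODEs \eqref{eq:al1-ode}--\eqref{eq:al2-ode} for $\alpha_1,\alpha_2$ and the defining relations \eqref{eq:V_1coeff}--\eqref{eq:V_2coeff} for $W_1,W_2$ are inserted, and then track the $\tilde\Omega$ and $\omega$ terms appearing in \eqref{eq:firstFexpansion} and \eqref{eq:Omegatilde}. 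This is a cancellation entirely analogous to, but noticeably bulkier than, the one that produced $H_0(\rho)=O(\rho^{-1+2\theta})$ in Proposition~\ref{prop:Pgexpression}.
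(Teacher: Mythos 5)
Your proposal is essentially correct and takes exactly the route the authors intend: the paper omits this proof, noting only that it is ``very similar to that of Proposition~\ref{prop:Pgexpression},'' and your outline is precisely that analogy. You correctly identify that the $G_{\lambda-1}$ and $G_\lambda$ coefficients cancel by the defining relations \eqref{eq:V_1coeff}--\eqref{eq:V_2coeff} for $W_1,W_2$ (replacing the ODEs \eqref{eq:al1-ode}--\eqref{eq:al2-ode} used in Proposition~\ref{prop:Pgexpression}), that the residual $vG_{\lambda+1}$ source Fourier-transforms to a multiple of $k(\rho)\xi\,\widehat f_{\lambda+2}(k(\rho)\xi)$ via $vG_{\lambda+1}=-\tfrac{1}{2(\lambda+2)}\partial_v G_{\lambda+2}$, and that the $A_2,B_2,\rho_2,u_2$ contributions carry two extra factors of $[k^2-v^2]_+$ and thus give the faster-decaying $\eps^2 s(\rho,\xi)$ piece. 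The one place your exponents drift slightly is in the final paragraph, where the leading singular order needing cancellation is $O(\rho^{2\theta-2})$ rather than $O(\rho^{2\theta-3})$ (the $G_\lambda$ coefficients in both expansions \eqref{eq:firstFexpansion}--\eqref{eq:secondFexpansion} are stated to be of order $\rho^{2\theta-2}$); this does not affect the structure of the argument.
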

In particular, $r(\rho,\xi)$ acts asymptotically
like $\widehat{f}_{\la+1+\al}(k(\rho)\xi)$ as $\xi\to\infty$.
As the proof is very similar to that of Proposition \ref{prop:Pgexpression}, we omit it.

Observe that
\beqs
k(\rho)|\xi\widehat{f}_{\la+2}(k(\rho)\xi)|\leq\begin{cases}
C & \text{ if } k(\rho)|\xi|\leq 1,\\[1mm]
C|k(\rho)\xi|^{-\la-\frac{3}{2}} & \text{ if } k(\rho)|\xi|> 1,
\end{cases}
\eeqs
so that $k(\rho)\xi\widehat{f}_{\la+2}(k(\rho)\xi)$ satisfies the same bound
as $\widehat{f}_{\la+1}(k(\rho)\xi)$.
We may prove the following theorem analogously to Theorems \ref{thm:entkernelexistence}
and \ref{thm:entropykernelregularity}--\ref{thm:chipositive}.

\begin{thm}[Existence and Regularity of the Entropy-Flux Kernel]\label{th:fluxremainderexistenceandregularity}
For $\gamma \in (1,3)$, there exists $\widehat{h} \in L^{\infty}(0,\rho_{M};L^2(\mathbb{R}))$ that is a fixed point of
 \beq\label{eq:hfixedpoint}
  \widehat{h}(\rho,\xi)=\int_0^\rho\tilde{K}(\rho,s;\xi)\F(\mathcal{T}(h))(s,\xi)\,\dd s
  \eeq
such that $\widehat{h}(\rho,\xi)$ satisfies
\beq\label{ineq:hhatL2bound}
  \|\widehat{h}(\rho,\cdot)\|_{L^2}+\|\rho\widehat{h}_\rho(\rho,\cdot)\|_{L^2}\leq C\rho^{1+\frac{3\theta}{2}}e^{\frac{\rho^{2\theta}}{2\theta}}.
  \eeq
The remainder function $h=h(\rho,v)$ is such that $\partial_v^\mu h$ is H\"{o}lder continuous in $(\rho,v)$ for $\rho>0$
for all $\mu$ with $0\leq\mu<\lambda+2$.
In addition, if $0<\be<\mu$,
$$
|\partial_v^\be h(\rho,v)|\leq C\rho^{1+\th-2\mu\th+\be\th}[k(\rho)^2-v^2]_+^{\mu-\be}.
$$
Finally, as $\eps\to0$, $\sigma(\rho,v,s)\to\sigma^*(\rho,v,s)$ locally uniformly,
where $\sigma^*$ is the classical entropy-flux kernel as in {\rm \cite{ChenLeFloch,ChenLeFloch2}}.
In particular,
$$
\|\sigma(\rho,\cdot,\cdot)-\sigma^*(\rho,\cdot,\cdot)\|_{L^\infty(\{0\leq\rho\leq\rho_M\})}\leq C\eps.
$$
\end{thm}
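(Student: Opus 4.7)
The strategy is to mirror the three-step program for the entropy kernel in Theorems \ref{thm:entkernelexistence}, \ref{thm:entropykernelregularity}, and \ref{thm:chipositive}. The preceding proposition exhibits $\mathcal{T}(h)$ as structurally identical to $\mathcal{S}(g)$, with the leading forcing $H_1(\rho)k(\rho)\xi\widehat{f}_{\lambda+2}(k(\rho)\xi)$ taking the place of $H_0(\rho)\widehat{f}_{\lambda+1}(k(\rho)\xi)$. The key observation noted immediately before the statement is that $|k(\rho)\xi\widehat{f}_{\lambda+2}(k(\rho)\xi)|$ obeys the same pointwise upper bound as $|\widehat{f}_{\lambda+1}(k(\rho)\xi)|$, while $|H_1(\rho)|\leq C\rho^{-1+2\theta}$; consequently, the entire functional-analytic core of \S \ref{sec:entropykernel} transfers with identical quantitative estimates.

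For existence and the $L^2$ bound, I would construct $\widehat{h}$ by the Picard iteration $\widehat{h}^0=0$ and $\widehat{h}^{n+1}(\rho,\xi)=\int_0^\rho \tilde{K}(\rho,s;\xi)\mathcal{F}(\mathcal{T}(h^n))(s,\xi)\,\dd s$. The first iterate is controlled via the Bessel estimate \eqref{ineq:armadillo}, yielding $\|\widehat{h}^1(\rho,\cdot)\|_{L^2}\leq C\rho^{1+\frac{3\theta}{2}}$ exactly as in \eqref{ineq:G1}. For the increments $\widehat{H}^{n+1}:=\widehat{h}^{n+1}-\widehat{h}^n$, integrating by parts as in \eqref{eq:antelope} and invoking Lemma \ref{lemma:Kbounds} together with Plancherel control of $\mathcal{F}((A-A_0)_s)$, $\mathcal{F}(Bv)$, and $\beta(\rho)$ yield the Volterra-type recursion $\rho^{-1}\|\widehat{H}^{n+1}(\rho,\cdot)\|_{L^2}\leq C\int_0^\rho s^{-2+2\theta}\|\widehat{H}^n(s,\cdot)\|_{L^2}\,\dd s$. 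Iterating and summing the resulting geometric series produces \eqref{ineq:hhatL2bound}; the companion estimate for $\rho\widehat{h}_\rho$ follows by differentiating the fixed-point equation in $\rho$ and repeating the same scheme.

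For the regularity, I would bound $|\partial_v^\mu h(\rho,v)|\leq C\int_\R|\xi|^\mu|\widehat{h}(\rho,\xi)|\,\dd\xi$ and distribute the weight $|\xi|^\mu$ between $\tilde{K}(\rho,s;\xi)$ and the forcing. For $0\leq\mu<\frac{1}{2}$, the weighted bound $\||\xi|^\mu\tilde{K}\|_{L^2_\xi}\leq C\rho^{1-\theta/2-\mu\theta}$ from Lemma \ref{lemma:Kbounds} suffices. To extend the estimate to the full range $\mu<\lambda+2$, I would redistribute powers and exploit the $L^1_\xi$-bound $\||\xi|^{\mu-1}k(s)\xi\widehat{f}_{\lambda+2}(k(s)\xi)\|_{L^1}\leq Cs^{1+2\theta-\mu\theta}$, which is the flux-kernel analogue of the bound used in Theorem \ref{thm:entropykernelregularity}. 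Combined with the parallel estimate on $\rho\partial_\rho\partial_v^\mu h$ and the weighted embedding $W^{1,p}_\rho\subset C^{0,\alpha}_\rho$, this delivers H\"older continuity. Rescaling via $z=v/k(\rho)$ and applying $|\partial_z^\beta h|\leq\sup|\partial_z^\mu h|\,[1-z^2]_+^{\mu-\beta}$ yields the support-aware estimate $|\partial_v^\beta h(\rho,v)|\leq C\rho^{1+\theta-2\mu\theta+\beta\theta}[k(\rho)^2-v^2]_+^{\mu-\beta}$.

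For the Newtonian limit, I would decompose $\widehat{h}-\widehat{h^*}=\int_0^\rho(\tilde{K}-\tilde{K^*})\mathcal{F}(\mathcal{T}(h))\,\dd s+\int_0^\rho\tilde{K^*}\mathcal{F}(\mathcal{T}(h)-\mathcal{T}^*(h^*))\,\dd s$ and bound each term by $O(\eps)$ exactly as in Theorem \ref{thm:chipositive}, using Lemma \ref{lemma:utilderhotildeexpansions} to control the $\eps$-corrections in $\rho_i$ and $u_i$ and the explicit formulas \eqref{eq:V_1coeff}--\eqref{eq:V_2coeff} to track the convergence of $W_1,W_2$ to their classical counterparts. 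Combined with $\sigma=\tilde{u}\chi-v(b_1 G_\lambda+b_2 G_{\lambda+1})+h$, the convergence $\tilde{u}\to u$ from Lemma \ref{lemma:utilderhotildeexpansions}, and Theorem \ref{thm:chipositive} for $\chi\to\chi^*$, this yields $\|\sigma-\sigma^*\|_{L^\infty(\{0\leq\rho\leq\rho_M\})}\leq C\eps$. The main obstacle, in my view, is the algebraic bookkeeping behind the analogue of Proposition \ref{prop:Pgexpression}: one must verify that $W_1,W_2$ of \eqref{eq:V_1coeff}--\eqref{eq:V_2coeff} genuinely annihilate the $G_{\lambda-1}$ and $G_\lambda$ singularities isolated in \eqref{eq:firstFexpansion}--\eqref{eq:secondFexpansion}, and that the surviving forcing has precisely the form $H_1(\rho)k(\rho)\xi\widehat{f}_{\lambda+2}(k(\rho)\xi)+\eps^2 s(\rho,\xi)$ with $H_1(\rho)=O(\rho^{-1+2\theta})$; once this input is in place, the machinery of \S \ref{sec:entropykernel} applies verbatim.
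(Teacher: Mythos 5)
Your proposal is correct and follows precisely the route the paper intends: the paper's own ``proof'' consists of the single observation that $k(\rho)\xi\widehat{f}_{\lambda+2}(k(\rho)\xi)$ obeys the same pointwise bound as $\widehat{f}_{\lambda+1}(k(\rho)\xi)$ and then a declaration that the theorem ``may be proved analogously to Theorems \ref{thm:entkernelexistence} and \ref{thm:entropykernelregularity}--\ref{thm:chipositive}.'' You have simply unpacked that analogy step by step (Picard iteration with the same Volterra recursion, fractional-derivative bounds and the weighted Sobolev embedding for H\"older regularity, and the $\tilde{K}-\tilde{K}^*$ / $\mathcal{T}-\mathcal{T}^*$ splitting for the Newtonian limit), which is exactly what the paper leaves implicit; the ``main obstacle'' you flag at the end is already dispatched by the flux analogue of Proposition \ref{prop:Pgexpression} stated just before the theorem.
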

In what follows, especially in \S \ref{sec:relativisticcompactness},
we require not only an expansion for $\sigma-\tilde{u}\chi$,
but also for $\sigma-\la_\pm\chi$, where $\la_\pm$ are the eigenvalues of the system defined in \S 2.

\begin{cor}\label{cor:fluxexpansioneigenvalues}
$\sigma-\la_\pm\chi$ satisfy the following expansions{\rm :}
\begin{equation*}
   (\sigma-\lambda_{\pm}\chi)(\rho,v,s)=(\sigma-\lambda_{\pm}\chi)(\rho,v-s,0)=(\mp k-(v-s))\rho_0\frac{k'}{k}\chi(\rho,v-s)+R_\pm(\rho,v-s),
 \end{equation*}
where $|R_\pm(\rho,v-s)|\leq C|k(\rho)^2-(v-s)^2|\chi(\rho,v-s)$.
\end{cor}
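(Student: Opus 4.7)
The plan is to reduce to $s=0$ by Lorentz invariance (noting that $\lambda_\pm$, $\tilde u$ and $\chi$ all transform consistently under the Lorentz shift $v\mapsto v-s$), and then write
$$
(\sigma-\lambda_\pm\chi)(\rho,v)=(\sigma-\tilde u\chi)(\rho,v)+(\tilde u-\lambda_\pm)(\rho,v)\,\chi(\rho,v).
$$
For the first summand I would plug in the expansion from Theorem \ref{thm:entropyfluxkernelmain},
$(\sigma-\tilde u\chi)(\rho,v)=-v(b_1G_\lambda+b_2G_{\lambda+1})+h$, recalling from \eqref{eq:V_1coeff} that $b_1=W_1a_1=\frac{k'}{k}\rho_0\,a_1$.

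The heart of the matter is a direct algebraic identity. Using $\tilde u=\frac{u(1-\eps p')}{1-p'\eps^2u^2}$ and $\lambda_\pm=\frac{u\pm\sqrt{p'}}{1\pm\eps u\sqrt{p'}}$, a short computation yields
$$
\tilde u-\lambda_\pm=\mp\,\frac{\sqrt{p'(\rho)}\,(1-\eps u^2)}{1-p'\eps^2u^2},
$$
while $k'=\frac{\sqrt{p'}}{\rho+\eps p}$ together with the formula for $\rho_0$ in Lemma \ref{lemma:utilderhotildeexpansions} gives
$k'\rho_0=\frac{\sqrt{p'}(1-\eps u(k)^2)}{1-p'\eps^2u(k)^2}$, where $u(k):=u(k(\rho))$. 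Since $u(\cdot)$ is smooth and $u^2-u(k)^2$ vanishes to first order in $v-k$ at $v=\pm k$, Taylor expansion yields
$$
\tilde u-\lambda_\pm=\mp\, k'\rho_0+O\bigl(k(\rho)^2-v^2\bigr).
$$

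Substituting this and the expansion of $\chi=a_1G_\lambda+a_2G_{\lambda+1}+g$ from Theorem \ref{thm:entropykernelmain}, and using $b_1=\frac{k'}{k}\rho_0a_1$ to collect the $\chi$-term, I would obtain
\begin{align*}
(\sigma-\lambda_\pm\chi)&=\bigl(\mp k'\rho_0-v\tfrac{k'}{k}\rho_0\bigr)\chi+R(\rho,v)\\
&=(\mp k-v)\,\rho_0\,\tfrac{k'}{k}\,\chi+R(\rho,v),
\end{align*}
with remainder $R=v\frac{k'}{k}\rho_0(a_2G_{\lambda+1}+g)-vb_2G_{\lambda+1}+h+O(k^2-v^2)\chi$.

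It remains to verify $|R|\le C|k^2-v^2|\chi$. The two "polynomial" pieces are immediate since $G_{\lambda+1}=(k^2-v^2)G_\lambda$ and $a_2/a_1$, $b_2/a_1$ are bounded by Theorems \ref{thm:entropykernelmain}--\ref{thm:entropyfluxkernelmain}. For the remainder functions $g$ and $h$, I would apply the regularity estimates of Theorems \ref{thm:entropykernelregularity} and \ref{th:fluxremainderexistenceandregularity} with $\mu=\lambda+1$, together with the relation $2\lambda\theta=1-\theta$, to obtain $|g(\rho,v)|+|h(\rho,v)|\le C[k(\rho)^2-v^2]_+^{\lambda+1}$; combined with the lower bound $\chi(\rho,v)\ge\tfrac12\chi^*(\rho,v)\ge c[k(\rho)^2-v^2]_+^{\lambda}$ from Theorem \ref{thm:chipositive}, this yields the desired $|g|+|h|\le C(k^2-v^2)\chi$. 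The main obstacle is the careful algebraic comparison between $\tilde u-\lambda_\pm$ and $\mp k'\rho_0$: both quantities are of the same form but evaluated at different velocities ($u(v)$ versus $u(k(\rho))$), and one must verify that their difference vanishes to exactly first order in $k^2-v^2$ so that the remainder can be absorbed pointwise into $(k^2-v^2)\chi$ rather than only into $(k^2-v^2)^{\lambda+1}$.
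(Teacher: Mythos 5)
Your proof is correct and follows essentially the same route as the paper's. You decompose $\sigma-\lambda_\pm\chi=(\sigma-\tilde u\chi)+(\tilde u-\lambda_\pm)\chi$, which is exactly the paper's $\sigma-\lambda_\pm\chi=(\sigma-\tilde u\chi)\mp\tilde\rho k'\chi$ once you observe (as you do, by direct algebra) that $\tilde u-\lambda_\pm=\mp\sqrt{p'}(1-\eps u^2)/(1-p'\eps^2u^2)=\mp\tilde\rho k'$; you then invoke the Taylor expansion of $\tilde\rho$ around $v^2=k^2$, which is exactly Lemma \ref{lemma:utilderhotildeexpansions}. The identification $b_1=\rho_0\frac{k'}{k}a_1$ and the bookkeeping of the remainder terms match the paper's displayed expansion term by term. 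The only material elaboration you add is the justification of $|R|\le C|k^2-v^2|\chi$: you rightly note that the $G_{\lambda+1}$-terms absorb via $G_{\lambda+1}=(k^2-v^2)G_\lambda$ together with the uniform lower bound $\chi\ge\frac12\chi^*\ge cG_\lambda$ from Theorem \ref{thm:chipositive} (for $\eps\le\eps_0$), and that the $g,h$-terms are controlled by the regularity theorems with $\mu=\lambda+1$ (the exponent $1+(1-2\mu)\theta$ becomes exactly $0$ by $2\lambda\theta=1-\theta$, giving $|g|+|h|\le C G_{\lambda+1}$). This is a fair unpacking of the paper's terse "from which the desired conclusion follows directly."

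One small caveat applies to both arguments: the reduction to $s=0$ via Lorentz invariance is stated without full detail. The paper justifies it by noting $(\tilde\rho k'\chi)_\rho\to0$ in the sense of measures as $\rho\to0$, so that $\sigma-\lambda_\pm\chi$ solves a problem with vanishing initial data; your phrase "all transform consistently under the Lorentz shift" glosses over the fact that $\tilde\rho(\rho,v)$ and hence $\lambda_\pm(\rho,v)$ are not themselves shift-invariant (they depend on $u(v)^2$, not on $(v-s)^2$). This does not change the final conclusion you reach, since the substantive expansion is carried out at $s=0$ with all arguments evaluated consistently, but you should be aware that the shift-invariance step is the most delicate part of the statement, not a formality.
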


\begin{proof}
We begin by recalling
$$
\lambda_{\pm}=\tilde{u}\pm \tilde{\rho}k'(\rho).
$$
To show that $\sigma-\la_\pm\chi$ remain invariant under the Lorentzian transformation,
it suffices to check that the functions and their derivatives with respect to $\rho$ at $\rho=0$ remain invariant.
However, this follows from the simple fact that
$$
\big(\tilde{\rho}k'(\rho)\chi(\rho,v-s)\big)_\rho\to 0
\qquad\mbox{in the sense of measures as $\rho\to 0$}.
$$
Considering the case for $s=0$ without loss of generality,
we use expansions \eqref{eq:relativisticentropyexpansion} and \eqref{eq:relativisticentropyfluxexpansion}
for $\chi(\rho,v)$ and $(\sigma-\tilde u\chi)(\rho,v)$ to obtain
\beqas
  (\sigma-\lambda_{\pm}\chi)(\rho,v,0)=&\,(\mp k-v)\rho_0\frac{k'}{k}a_1[k^2-v^2]_+^\lambda\\
  &-\big(\pm \rho_1a_1k'\pm \rho_2(k^2-v^2)a_1k'\pm \tilde{\rho}a_2k'+vb_2\big)[k^2-v^2]_+^{\lambda+1}\\
  &\mp \tilde{\rho}k'g(\rho,v)+h(\rho,v),
\eeqas
from which the desired conclusion follows directly.
\end{proof}

In the reduction argument of the next section, we require an accurate analysis of the singularities
of the entropy and entropy-flux kernels constructed above.
To this end, we now provide explicit formulae for the singularities in the fractional
derivatives of order $\la+1$.

\begin{prop}[Explicit Singularities of the Entropy Kernels]\label{prop:relativisticexplicitsingularities}
The two distributions $\partial_v^{\lambda+1}\chi$ and $\partial_v^{\lambda+1}\sigma$ satisfy
\beqa
&\partial_v^{\lambda+1}\chi=\,k'(\rho)^{-\frac{1}{2}}e^{\tilde{a}(\rho)}\sum_\pm K^\pm\delta_{v=\mp k(\rho)} + e^I(\rho,v),\\
&\partial_v^{\lambda+1}(\sigma-\tilde{u}\chi)
=-v\rho_0(\rho)k(\rho)^{-1}k'(\rho)^{\frac{1}{2}}e^{\tilde{a}(\rho)}\sum_\pm K^\pm\delta_{v=\mp k(\rho)} + e^{II}(\rho,v),
\eeqa
where $K^\pm$ are constants, and $e^I$ and $e^{II}$ are H\"{o}lder continuous functions in the interior of the support of the kernels
such that
\begin{align}
  &|e^I(\rho,v)|\leq\, Ck(\rho)^{\lambda-1+2\alpha}[k(\rho)^2-v^2]_+^{-\alpha},\label{ineq:eIbound}\\
  &|e^{II}(\rho,v)|\leq\, Ck(\rho)^{\lambda+2\alpha}[k(\rho)^2-v^2]_+^{-\alpha}\label{ineq:eIIbound}
\end{align}
for all $\alpha\in(0,1]$.
\end{prop}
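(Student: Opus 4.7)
The plan is to read the singular structure of $\partial_v^{\lambda+1}\chi$ and $\partial_v^{\lambda+1}(\sigma-\tilde u\chi)$ directly off the expansions of Theorems \ref{thm:entropykernelmain} and \ref{thm:entropyfluxkernelmain}. Only the leading terms $a_1[k^2-v^2]_+^\lambda$ and $-v\,b_1[k^2-v^2]_+^\lambda$ carry Dirac singularities at $v=\pm k$; the subleading terms $a_2[k^2-v^2]_+^{\lambda+1}$, $-v\,b_2[k^2-v^2]_+^{\lambda+1}$, and the remainders $g,h$ feed only the H\"older pieces $e^I,e^{II}$, via the pointwise bounds of Theorems \ref{thm:entropykernelregularity} and \ref{th:fluxremainderexistenceandregularity}.

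To extract the deltas I would factor $[k(\rho)^2-v^2]_+^\lambda = (k-v)_+^\lambda(k+v)_+^\lambda$ on the support and Taylor expand the smooth factor at $v=k$:
$$
(k+v)^\lambda = (2k)^\lambda + \sum_{j=1}^{N} c_{\lambda,j}(2k)^{\lambda-j}(v-k)^j + O\bigl((v-k)^{N+1}\bigr).
$$
Multiplying by $(k-v)_+^\lambda$ produces
$$
[k^2-v^2]_+^\lambda = (2k)^\lambda (k-v)_+^\lambda + \sum_{j=1}^{N}\tilde c_j\, k^{\lambda-j}(k-v)_+^{\lambda+j} + \mathcal{R}_N(\rho,v),
$$
with $\mathcal{R}_N$ vanishing to arbitrary order at $v=k$. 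The identity $\partial_v^{\lambda+1}(k-v)_+^\lambda = \Gamma(\lambda+1)\,\delta_{v=k}$ (obtained from \eqref{def:fractionalderivative} by analytic continuation in $\lambda$, as in \cite{ChenLeFloch2}) supplies the delta at $v=k$, while $\partial_v^{\lambda+1}(k-v)_+^{\lambda+j}$ is proportional to $(k-v)_+^{j-1}$ and H\"older continuous in the interior of the support. Multiplying by $a_1(\rho)=c_{*,\lambda}k^{-\lambda}k'^{-1/2}e^{\tilde a}$ converts the leading coefficient $(2k)^\lambda a_1\Gamma(\lambda+1)$ into $K^+ k'^{-1/2}e^{\tilde a}$ with the universal constant $K^+ := c_{*,\lambda}\,2^\lambda\Gamma(\lambda+1)$. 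Repeating at $v=-k$ gives $K^-$, establishing the first identity. For the flux kernel the smooth multiplier $-v$ reduces to $\mp k$ on the two deltas, and combining with $b_1=(k'/k)\rho_0\,a_1$ produces exactly the claimed coefficient $-v\,\rho_0\,k^{-1}k'^{1/2}e^{\tilde a}$ in front of $\delta_{v=\mp k}$.

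The H\"older piece $e^I$ collects three contributions: (i) the subleading Taylor terms $a_1 k^{\lambda-j}\partial_v^{\lambda+1}(k-v)_+^{\lambda+j}$, (ii) $\partial_v^{\lambda+1}\bigl(a_2[k^2-v^2]_+^{\lambda+1}\bigr)$, and (iii) $\partial_v^{\lambda+1}g$. For (iii), Theorem \ref{thm:entropykernelregularity} applied with $\beta=\lambda+1$ and any $\mu\in(\lambda+1,\lambda+2)$ gives a bound strictly stronger than \eqref{ineq:eIbound}. For (i), using $a_1\sim k^{-\lambda}$ and the identity $[k^2-v^2]_+^{-\alpha}\sim k^{-\alpha}(k-v)_+^{-\alpha}$ near $v=k$ reduces the target to $k^{\lambda-j}(k-v)_+^{j-1}\leq C k^{\lambda-1+\alpha}(k-v)_+^{-\alpha}$, which holds uniformly for $\alpha\in(0,1]$ and $j\geq 1$ since $(k-v)_+\leq 2k$. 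Contribution (ii) is treated identically using $|a_2|\leq C$ and expanding $[k^2-v^2]_+^{\lambda+1}$ one order further. The entropy-flux statement \eqref{ineq:eIIbound} follows by the same chain of arguments applied to \eqref{eq:relativisticentropyfluxexpansion} with $h$ in place of $g$ and $b_i$ in place of $a_i$; the one-higher power of $k$ in \eqref{ineq:eIIbound} reflects precisely the scaling $b_1/a_1 = (k'/k)\rho_0$ between the two leading coefficients.

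The main obstacle will be justifying the distributional Leibniz rule for $\partial_v^{\lambda+1}$ acting on a product of a smooth function (such as $(k+v)^\lambda$ or $-v$) with $(k-v)_+^\lambda$, and verifying that the commutator contributions fit inside the weighted bounds uniformly in $\alpha\in(0,1]$. I expect this is cleanest in the normalised coordinate $z=v/k(\rho)$ used in the proof of Theorem \ref{thm:entropykernelregularity}: both sides of the claim scale homogeneously in $z$, the Dirac masses become $k^{-1}\delta_{z=\mp 1}$, and the interior H\"older estimates already established in Sections \ref{sec:entropykernel}--\ref{sec:entropyfluxkernel} translate directly into the weight $[k^2-v^2]_+^{-\alpha}=k^{-2\alpha}[1-z^2]_+^{-\alpha}$ demanded by \eqref{ineq:eIbound}--\eqref{ineq:eIIbound}.
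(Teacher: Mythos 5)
Your plan coincides with the paper's: the paper dispatches this proposition in two sentences by citing the fractional-derivative identities for $\partial_v^{\lambda+1}G_\lambda$ and $\partial_v^\lambda G_\lambda$ from Chen--LeFloch \cite{ChenLeFloch} and then plugging in expansions \eqref{eq:relativisticentropyexpansion} and \eqref{eq:relativisticentropyfluxexpansion}. What you do is reconstruct those identities by factoring $G_\lambda=(k-v)_+^\lambda(k+v)_+^\lambda$, Taylor-expanding the smooth factor at $v=\pm k$, and using the analytic-continuation identity $\partial_v^{\lambda+1}(k-v)_+^{\lambda+j}\propto(k-v)_+^{j-1}$. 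That is the content of Chen--LeFloch's Proposition 2.4, so it is the same route fleshed out rather than an alternative one. The coefficient bookkeeping is right: $(2k)^\lambda a_1\Gamma(\lambda+1)=K^\pm k'^{-1/2}e^{\tilde a}$, and $b_1=(k'/k)\rho_0 a_1$ together with the evaluation of $-v$ at $v=\mp k$ reproduces the claimed delta coefficient for $\sigma-\tilde u\chi$.

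One small inaccuracy worth flagging: you write ``$a_1\sim k^{-\lambda}$'' when explaining the bound for contribution (i). That ignores the $k'(\rho)^{-1/2}$ factor; since $k'\sim\theta\rho^{\theta-1}$ and $k\sim\rho^\theta$ with $2\lambda\theta=1-\theta$, in fact $k^{-\lambda}k'^{-1/2}\sim\text{const}$, so $a_1\sim O(1)$ (as Theorem \ref{thm:entropykernelmain} already records: $a_1+|a_2|\leq C$). Had $a_1\sim k^{-\lambda}$ actually held, your target inequality would have become $k^{-j}(k-v)_+^{j-1}\leq Ck^{\lambda-1+\alpha}(k-v)_+^{-\alpha}$, which fails for small $k$. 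Fortunately the inequality you actually wrote down, $k^{\lambda-j}(k-v)_+^{j-1}\leq Ck^{\lambda-1+\alpha}(k-v)_+^{-\alpha}$, corresponds to $a_1=O(1)$ and is the correct one; so the estimate goes through, but the asymptotic you quote for $a_1$ should be corrected. The distributional-Leibniz/pseudolocality concern you flag at the end is genuine but standard; it is precisely the technicality handled in the cited proof of \cite[Proposition 2.4]{ChenLeFloch}, and your normalized-coordinate $z=v/k$ scaling observation is an acceptable way to organize it.
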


\begin{proof}
The identities for the fractional derivatives $\partial_v^{\la+1}G_{\la}$ and $\partial_v^{\la}G_\la$
may be found in \cite[Proof of Proposition 2.4]{ChenLeFloch}.
The desired representations then follow from expansions \eqref{eq:relativisticentropyexpansion} and \eqref{eq:relativisticentropyfluxexpansion},
exactly as in that proof.
\end{proof}

Finally, we record a property of the coefficients to be required in the sequel.

\begin{prop}\label{prop:Dcoefficient}
We define a coefficient $D=D(\rho)$ as
$$
D(\rho):=a_1(\rho)b_1(\rho)-2k(\rho)^2\big(a_1(\rho)b_2(\rho)-a_2(\rho)b_1(\rho)\big).
$$
Then there exists $\eps_0>0$ such that, for all $\eps\in(0,\eps_0)$, $D(\rho)>0$ for any $\rho\in(0,\rho_M)$.
\end{prop}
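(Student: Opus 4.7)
My plan is a Newtonian--limit continuity argument. First I would substitute $b_i=W_ia_i$ from \S\ref{sec:entropyfluxkernelcoeffs} into the definition of $D$ and use the identity
$$
a_1b_2-a_2b_1=a_1a_2(W_2-W_1)
$$
to obtain the factorization
$$
D(\rho)=a_1(\rho)\bigl[W_1(\rho)\,a_1(\rho)+2k(\rho)^2\,(W_1(\rho)-W_2(\rho))\,a_2(\rho)\bigr].
$$
Since $a_1(\rho)>0$ on $(0,\rho_M)$ by Theorem \ref{thm:entropykernelmain}, it will suffice to prove that the bracketed quantity is strictly positive. Observe that when either $\eps=0$ and the correction term involving $k^2$ is $o(1)$, or $\rho=0$ and $k^2=0$, the bracket reduces to the manifestly positive leading term $W_1a_1$, so the problem is really a uniform perturbation statement off the classical (vacuum, $\eps=0$) case.

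Next, I would pass to the Newtonian limit $\eps\to 0$, exploiting the uniform convergences $a_i\to a_i^*$, $W_i\to W_i^*$, and $k\to k^*$ on $[0,\rho_M]$ that follow from Theorem \ref{thm:chipositive}, Theorem \ref{th:fluxremainderexistenceandregularity}, and the explicit formulas \eqref{eq:V_1coeff}, \eqref{eq:a_2coefficient}, \eqref{eq:V_2coeff}. The limit
$$
D^*(\rho)=a_1^*(\rho)\bigl[W_1^*(\rho)a_1^*(\rho)+2k^*(\rho)^2(W_1^*(\rho)-W_2^*(\rho))a_2^*(\rho)\bigr]
$$
is precisely the coefficient whose strict positivity underlies the reduction step in the compensated--compactness analysis of the classical Euler system \eqref{eq:classical-Euler} in \cite{ChenLeFloch,ChenLeFloch2}. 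Its positivity near the vacuum is immediate: as $\rho\to 0$, $k^*(\rho)^2\to 0$ while $W_1^*(\rho)=\rho k^{*\prime}(\rho)/k^*(\rho)\to\theta>0$ (by Lemma \ref{lemma:k-asymptotics}) and $a_1^*$ is bounded below, so the bracket is dominated by $W_1^*a_1^*>0$ on some interval $[0,\delta]$ with $\delta$ independent of $\eps$. On the complementary compact subinterval $[\delta,\rho_M]$, $D^*$ is continuous and strictly positive by the classical kernel analysis of \cite{ChenLeFloch2}, hence bounded below by some $c_0>0$ depending only on $\rho_M$ and the pressure law \eqref{ass:pressure}.

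A uniform--continuity argument then closes the proof: because the convergence of $D(\rho)$ to $D^*(\rho)$ is uniform on $[0,\rho_M]$ with rate $O(\eps)$ (inherited from the kernel convergences in Theorems \ref{thm:chipositive} and \ref{th:fluxremainderexistenceandregularity}), there will exist $\eps_0>0$ such that, for every $\eps\in(0,\eps_0)$ and every $\rho\in(0,\rho_M)$, $D(\rho)\ge\tfrac12 D^*(\rho)>0$. The main obstacle will be the verification of strict positivity of $D^*$ on the interior of $[\delta,\rho_M]$ for the full class of pressure laws \eqref{ass:pressure}; this requires invoking the detailed structure of the classical entropy and entropy--flux kernel coefficients established in \cite{ChenLeFloch2}, rather than any new relativistic computation. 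Once that classical positivity is granted, the passage to small $\eps>0$ is a routine compactness/continuity argument using the uniform bounds on the remainders $g$ and $h$ already proved in Theorems \ref{thm:entropykernelmain} and \ref{thm:entropyfluxkernelmain}.
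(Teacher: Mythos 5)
Your factorization $D = a_1\bigl[W_1a_1 + 2k^2(W_1-W_2)a_2\bigr]$ is correct, and the overall strategy of reducing to the classical case via uniform $\eps\to 0$ convergence is sound, but it is a genuinely different route from the paper. The paper proves the proposition by a direct calculation which produces the closed form
$$
D(\rho)=\frac{a_1(\rho)^2}{2(\lambda+1)p'(\rho)}\Bigl(\rho p''(\rho)+2p'(\rho)+O(\eps\rho^{\gamma-1})\Bigr),
$$
so that positivity is manifest: the leading bracket is exactly the genuine nonlinearity condition \eqref{ass:genuine-nonlinearity}, and the $O(\eps\rho^{\gamma-1})$ correction is absorbed for small $\eps$ (note $\rho p''+2p'\sim\kappa\gamma(\gamma+1)\rho^{\gamma-1}$ near vacuum, so the error is genuinely subdominant). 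This is shorter and more informative than a limiting argument: it identifies the structural hypothesis responsible for positivity, and it gives the correct weighted error rate $O(\eps\rho^{\gamma-1})$ rather than a bare $O(\eps)$. Your approach buys modularity, delegating the nonvacuum positivity to \cite{ChenLeFloch2}, and you correctly flag that as the main thing you would need to import; the paper instead derives that positivity in one line from \eqref{ass:genuine-nonlinearity}.

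Two small points you should tighten if you pursue your route. First, Theorems \ref{thm:chipositive} and \ref{th:fluxremainderexistenceandregularity} state uniform $O(\eps)$ convergence for the \emph{kernels} $\chi,\sigma$, not directly for the coefficients $a_i,b_i$; you should instead appeal to the explicit formulas for $a_1$, \eqref{eq:a_2coefficient}, \eqref{eq:V_1coeff}, and \eqref{eq:V_2coeff}, from which $\tilde a(\rho)=O(\eps\rho^{\gamma-1})$ and $k-k^*=O(\eps)$ give a coefficient-level rate (this is essentially what the remark inside the proof of Theorem \ref{thm:chipositive} asserts, but it is worth making explicit). Second, your uniform bound $D\ge\tfrac12 D^*$ requires $D^*$ bounded below on all of $(0,\rho_M)$, including as $\rho\to 0$; this does hold — from the paper's formula one checks $D^*(0^+)=\frac{(\gamma+1)}{2(\lambda+1)}a_1^*(0)^2>0$ — but your draft only establishes the vacuum limit heuristically via the $W_1^*a_1^*$ term, so spelling out that $D^*$ has a strictly positive limit at $\rho=0$ would close the argument cleanly.
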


\begin{proof}
This follows from the formulae and bounds given for the coefficients above
in Theorems \ref{thm:entropykernelmain} and \ref{thm:entropyfluxkernelmain}.
Indeed, a calculation shows
$$
D(\rho)=\frac{a_1(\rho)^2}{2(\la+1)p'(\rho)}\big(\rho p''(\rho)+2p'(\rho)+O(\eps\rho^{\ga-1})\big).
$$
\end{proof}

\section{Compactness Framework}\label{sec:relativisticcompactness}
Now that the entropy and entropy-flux kernels have been constructed,
we apply them to analyze the compactness properties of a sequence of solutions or approximate
solutions of system \eqref{eq:relativistic-Euler}.
Therefore, the principal aim of this section is to show that a uniformly bounded sequence
of functions satisfying the $H^{-1}_{\loc}$ compactness of the entropy dissipation
measures (see \eqref{ass:relativisticentropydissipation} below) can be shown to converge not only weakly,
but also strongly in $L^p_{\loc}$.

\begin{thm}[Compactness of Approximate Solutions]\label{thm:relativisticcompactness}
Let $(\rho^\de, u^\de)\in (L^\infty(\R^2_+))^2$ with $\rho^\de\geq 0$ be a function sequence for $\de\in(0,1)$ such that
\begin{equation*}
 |u^\de(t,x)|\leq M<\frac{1}{\sqrt{\eps}},  \quad  0\leq\rho^\de(t,x)\leq\rho_{M}<\rho_{\max}^\eps \qquad \text{ for a.e. $(t,x)\in\R^2_+$}
\end{equation*}
for $\eps\in (0, \eps_0]$ with some $\eps_0>0$ defined as in Proposition {\rm \ref{prop:Dcoefficient}},
and $M$ and $\rho_M$ independent of $\de>0$.
Suppose that
the sequence of entropy dissipation measures
\begin{equation}\label{ass:relativisticentropydissipation}
\eta(\rho^\de,u^\de)_t + q(\rho^\de,u^\de)_x \qquad \text{ is compact in $H^{-1}_{\rm loc}(\R^2_+)$}
\end{equation}
for any weak entropy pair $(\eta,q)$.
Then there exist a subsequence $($still denoted$)$ $(\rho^\de,u^\de)$ and
measurable functions $(\rho,u)$ such that
\begin{equation*}
|u(t,x)|\leq M,  \quad  0\leq\rho(t,x)\leq \rho_M  \qquad \text{ for a.e. $(t,x)\in\R^2_+$},
 \end{equation*}
and $(\rho^\de,u^\de)$ converges strongly to $(\rho,u)$ as $\delta\to 0$ in $L^r_{\rm loc}(\R^2_+)$ for
all $r\in[1,\infty)$.
\end{thm}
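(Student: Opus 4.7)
The plan is to follow the compensated compactness reduction scheme pioneered by DiPerna \cite{DiPerna2}, developed in the gamma-law setting by Chen \cite{Chen1}, Ding-Chen-Luo \cite{DingChenLuo}, and Lions-Perthame-Tadmor \cite{LionsPerthameTadmor}, and extended to general pressure laws by Chen-LeFloch \cite{ChenLeFloch, ChenLeFloch2}, adapted here to exploit the relativistic kernels of Sections \ref{sec:entropykernel}--\ref{sec:entropyfluxkernel}. By the uniform $L^\infty$ bound on $(\rho^\delta, u^\delta)$, passing to a subsequence associates a Young measure $\nu_{t,x}$ supported in the invariant region $\{|u| \leq M,\, 0 \leq \rho \leq \rho_M\}$. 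Since the dominated convergence theorem and the uniform bound upgrade a.e.\ convergence to $L^r_{\loc}$ convergence for every $r \in [1, \infty)$, the theorem reduces to showing that $\nu_{t,x}$ is a Dirac mass for a.e.\ $(t,x)$.

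First, I would convolve the entropy and entropy-flux kernels against test functions $\psi \in C_c(\R)$ to build the weak entropy pairs
$$\eta^{\psi}(\rho, u) = \int_{\R} \psi(s)\, \chi(\rho, v(u) - s)\, \dd s, \qquad q^{\psi}(\rho, u) = \int_{\R} \psi(s)\, \sigma(\rho, v(u), s)\, \dd s,$$
and apply the Murat-Tartar div-curl lemma to the sequence $(\eta^{\psi_1}, q^{\psi_1}, \eta^{\psi_2}, q^{\psi_2})(\rho^\delta, u^\delta)$. Hypothesis \eqref{ass:relativisticentropydissipation} furnishes the required $H^{-1}_{\loc}$-compactness, while the uniform $L^\infty$ bound provides the $W^{-1,\infty}_{\loc}$ control needed for Murat's interpolation lemma. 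Passing to the limit and stripping off the arbitrary $\psi_1, \psi_2$ yields the Tartar commutation relation
$$\langle \nu, \chi_1 \sigma_2 - \chi_2 \sigma_1 \rangle = \langle \nu, \chi_1 \rangle\, \langle \nu, \sigma_2 \rangle - \langle \nu, \chi_2 \rangle\, \langle \nu, \sigma_1 \rangle$$
with $\chi_j := \chi(\cdot, \cdot, s_j)$ and $\sigma_j := \sigma(\cdot, \cdot, s_j)$, as a distributional identity in $(s_1, s_2) \in \R^2$.

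The heart of the argument is the reduction of $\supp \nu_{t,x}$ to a single point via this identity, carried out in the spirit of Chen-LeFloch. I would apply the fractional differential operator $\partial_{s_1}^{\la+1} \partial_{s_2}^{\la+1}$ to both sides and invoke the explicit singularity formulae of Proposition \ref{prop:relativisticexplicitsingularities}: both $\partial_s^{\la+1}\chi$ and $\partial_s^{\la+1}\sigma$ concentrate Dirac masses along $s = v \pm k(\rho)$, with H\"older remainders $e^I, e^{II}$ controlled by \eqref{ineq:eIbound}--\eqref{ineq:eIIbound}. After localizing $s_1$ at the infimum of $w = v+k$ and $s_2$ at the supremum of $z = v-k$ on $\supp \nu_{t,x}$ (the extremal characteristics supporting the kernels), the leading-order balance extracts exactly the coefficient
$$D(\rho) = a_1(\rho) b_1(\rho) - 2 k(\rho)^2 \big(a_1(\rho) b_2(\rho) - a_2(\rho) b_1(\rho)\big)$$
of Proposition \ref{prop:Dcoefficient}, multiplied by non-negative measure-theoretic quantities that vanish only if $\nu_{t,x}$ is a point mass in the $(w, z)$-plane. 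The positivity $D > 0$ granted by Proposition \ref{prop:Dcoefficient} for $\eps \leq \eps_0$, combined with the positivity $\chi \geq \tfrac{1}{2}\chi^* > 0$ from Theorem \ref{thm:chipositive}, then forces this collapse. The degenerate case $\supp \nu_{t,x} \subset \{\rho = 0\}$ is handled separately and is trivial, since the conserved fluxes vanish identically on the vacuum.

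The principal obstacle is controlling the cancellation of singularities in the commutation relation: one must verify that the Dirac contributions on the left-hand side, produced by $\partial_s^{\la+1}$ acting on the kernels, combine with those generated by differentiating $\langle \nu, \chi_j \rangle\, \langle \nu, \sigma_k \rangle$ so that only the coefficient $D(\rho)$ survives at leading order, with the H\"older remainders $e^I, e^{II}$ giving strictly lower-order corrections. The Lorentz invariance of the kernels (Section \ref{sec:relativisticentropy}) is essential here, as it reduces the two-parameter singularity geometry to the four endpoints $s_j = v \pm k(\rho)$ and enables exploitation of the translation symmetry $\chi(\rho, v, s) = \chi(\rho, v-s)$; the uniform H\"older bounds on $\partial_v^\mu g$ and $\partial_v^\mu h$ established in Theorems \ref{thm:entropykernelregularity} and \ref{th:fluxremainderexistenceandregularity} will be used throughout to justify the manipulations.
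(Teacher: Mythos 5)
Your high-level plan---div-curl lemma to derive the Tartar commutation relation for the kernels, followed by a singular-derivative reduction of the Young measure, with the positivity of $D(\rho)$ from Proposition \ref{prop:Dcoefficient} and of $\chi$ from Theorem \ref{thm:chipositive} delivering the collapse---matches the paper's strategy. But the reduction step as you've sketched it has a real structural gap.

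You propose to apply $\partial_{s_1}^{\lambda+1}\partial_{s_2}^{\lambda+1}$ directly to the two-variable commutation relation and then read off the Dirac contributions. This does not close, for two reasons. First, the right-hand side $\overline{\chi_1}\,\overline{\sigma_2}-\overline{\chi_2}\,\overline{\sigma_1}$ does not disappear under differentiation; you are left with products like $\overline{P_1\chi_1}\cdot\overline{P_2\sigma_2}$, which are products of measures in $s_1$ and $s_2$, and there is no obvious way to show these cancel. The paper avoids this entirely by passing to a \emph{three-point cyclic identity}: evaluating the commutation relation at the pairs $(s_2,s_3)$, $(s_3,s_1)$, $(s_1,s_2)$, multiplying by $\overline{\chi(s_1)}$, $\overline{\chi(s_2)}$, $\overline{\chi(s_3)}$ respectively and summing, so that the right-hand sides vanish by symmetry before any differentiation is applied. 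Second, after applying the fractional operators $P_2=\partial_{s_2}^{\lambda+1}$, $P_3=\partial_{s_3}^{\lambda+1}$, the surviving left-hand side involves products of Dirac masses with Heaviside-type jumps, and the whole point of the Chen-LeFloch ``imbalance of regularity'' mechanism is that the limit of the mollified product $\big(H*\phi_2^\delta\big)\big(\delta*\phi_3^\delta\big)$ genuinely depends on the choice of the two \emph{distinct} mollifiers $\phi_2\neq\phi_3$ (Lemma \ref{lemma:measureheavisideproduct}, following Dal Maso--LeFloch--Murat). The nonzero factor $Y(\phi_2,\phi_3)$ that this produces is precisely what makes the final identity nontrivial; with a single symmetric regularization (which is what ``apply $\partial_{s_1}^{\lambda+1}\partial_{s_2}^{\lambda+1}$ to both sides'' implicitly does), the cancellation lemma (Lemma \ref{lemma:cancellationI}) gives $0=0$ and no information. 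Your proposal never mentions this.

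A secondary issue: you conflate the vertex-support lemma (Lemma \ref{lemma:relativisticvertexsupport}) with the main reduction. The ``localize $s_1$ at the infimum of $w$ and $s_2$ at the supremum of $z$'' step is how the paper proves that the vertex $(w_{\max},z_{\min})$ lies in $\supp\nu$, using the expansion of $\sigma-\lambda_\pm\chi$ from Corollary \ref{cor:fluxexpansioneigenvalues}; it is \emph{not} part of the fractional-derivative argument. The vertex lemma is then used at the very end, after the mollified three-term identity has produced
$\sum_\pm \int \overline{\chi(v\pm k(\rho))}\, M(\rho)D(\rho)\,\dd\nu = 0$,
to guarantee $\overline{\chi(s)}>0$ for $s\in(z_{\min},w_{\max})$ and hence force $\supp\nu$ to avoid the open triangle. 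Finally, the conclusion that $\nu$ is a Dirac in the $(w,z)$-plane is not immediate; the paper must rule out $\nu$ being a nontrivial convex combination of a vacuum-supported piece and the vertex mass, using the commutation relation once more. Without the three-point trick, the two-mollifier imbalance, and the vertex lemma cleanly separated, the reduction does not go through.
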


The proof of this theorem rests on two main ingredients: the div-curl lemma of Murat-Tartar \cite{Murat1,Tartar},
and the following reduction result for Young measures constrained by the Tartar commutation relation
whose proof is temporarily postponed.

\begin{thm}[Reduction of Support of the Young Measure]\label{thm:relativisticreduction}
Suppose that $\nu(\rho,v)$ is a Young measure $($probability measure$)$ with bounded support contained
in $\{|v|\le v(M), 0\leq\rho\leq\rho_{M}\}$
satisfying the commutation relation{\rm :}
\begin{equation}\label{eq:relativisticcommutation}
 \overline{\eta_1q_2-\eta_2q_1}=\overline{\eta_1}\,\overline{q_2}-\overline{\eta_2}\,\overline{q_1}
\end{equation}
for any two weak entropy pairs $(\eta_1,q_1)$ and $(\eta_2,q_2)$,
where we have denoted
$$
\overline{f}:=\int f(\rho,v)\,\dd \nu(\rho,v) \qquad\text{ for any continuous function $f(\rho,v)$.}
$$
Then either $\nu$ is supported in the vacuum line $\{\rho=0\}$ or the support of $\nu$ is a single point.
\end{thm}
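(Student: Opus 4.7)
The plan is to adapt the DiPerna--Chen--LeFloch reduction argument to the relativistic kernels, exploiting three structural ingredients from the previous sections: the explicit singularity decomposition of $\partial_v^{\lambda+1}\chi$ and $\partial_v^{\lambda+1}\sigma$ in Proposition \ref{prop:relativisticexplicitsingularities}, the strict positivity of $\chi$ in the interior of its support from Theorem \ref{thm:chipositive}, and the strict positivity of $D(\rho)$ from Proposition \ref{prop:Dcoefficient}. These are precisely the relativistic counterparts of the objects used in the classical arguments of \cite{ChenLeFloch,ChenLeFloch2}.

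First, I would specialize the Tartar commutation relation \eqref{eq:relativisticcommutation} to the weak entropy pairs generated by convolution with the kernels: for $\psi_1,\psi_2\in C_c^\infty(\R)$, set
$$
\eta^{\psi_i}(\rho,u)=\int_\R \psi_i(s)\,\chi(\rho,v(u)-s)\,\dd s, \qquad q^{\psi_i}(\rho,u)=\int_\R \psi_i(s)\,\sigma(\rho,v(u),s)\,\dd s.
$$
Varying $\psi_1,\psi_2$, relation \eqref{eq:relativisticcommutation} becomes, in the distributional sense in $(s_1,s_2)\in\R^2$,
$$
\overline{\chi(\rho,v-s_1)\,\sigma(\rho,v,s_2)-\chi(\rho,v-s_2)\,\sigma(\rho,v,s_1)}=\overline{\chi(\cdot,\cdot-s_1)}\,\overline{\sigma(\cdot,\cdot,s_2)}-\overline{\chi(\cdot,\cdot-s_2)}\,\overline{\sigma(\cdot,\cdot,s_1)}.
$$

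Second, I would apply the fractional derivative $\partial_{s_2}^{\lambda+1}$ to this identity and pass to the trace $s_2\to s_1=:s$. By Proposition \ref{prop:relativisticexplicitsingularities}, $\partial_{s}^{\lambda+1}\chi$ and $\partial_{s}^{\lambda+1}(\sigma-\tilde{u}\chi)$ split into two Dirac masses on the characteristic curves $v-s=\mp k(\rho)$ plus H\"older-bounded remainders $e^I,e^{II}$ controlled by \eqref{ineq:eIbound}--\eqref{ineq:eIIbound}. Using Corollary \ref{cor:fluxexpansioneigenvalues} to rewrite $\sigma-\lambda_\pm\chi$ in a form exposing the eigenvalue structure, one can isolate on each side an ``intrinsic'' singular contribution. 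Carrying out the antisymmetrization and then differentiating once more in $s$, the bulk (H\"older) contributions vanish by a Lebesgue-point/averaging argument with $\alpha\in(0,1]$ chosen small, and one is left with an identity whose leading part, after unwinding the products of the expansions for $\chi$ and $\sigma$, is proportional to $D(\rho)\,\chi(\rho,v-s)^2$ times an arbitrary test function of $(v,s)$.

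Third, since $D(\rho)>0$ on $(0,\rho_M]$ by Proposition \ref{prop:Dcoefficient} and $\chi>0$ in the interior of its support by Theorem \ref{thm:chipositive}, this forces the restriction of $\nu$ to $\{\rho>0\}$ to be concentrated on a single characteristic in $(\rho,v)$; a second application obtained by interchanging the roles of the two Riemann invariants (equivalently, taking the symmetric derivative $\partial_{s_1}^{\lambda+1}$ with the opposite selection of the $\pm k(\rho)$ atom) reduces the support further to a single point $(\rho_*,v_*)$. In Riemann-invariant coordinates $w=v+k(\rho)$ and $z=v-k(\rho)$, this is the assertion that both $\overline{w}$ and $\overline{z}$ are realized as Dirac masses on $\supp\nu\cap\{\rho>0\}$, giving the claimed dichotomy.

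The hard part is the second step: tracking the cancellation of singularities of different strengths in the trace $s_2\to s_1$. Concretely, $\partial_{s_2}^{\lambda+1}\chi$ and $\partial_{s_2}^{\lambda+1}\sigma$ each contribute a pair of Dirac masses at $v-s_2=\pm k(\rho)$ with coefficients depending on $\rho$ through $a_1,a_2,b_1,b_2$; after antisymmetrization only the particular combination $D(\rho)=a_1b_1-2k^2(a_1b_2-a_2b_1)$ of Proposition \ref{prop:Dcoefficient} survives in the leading singular part, while the H\"older remainders $e^I,e^{II}$ must be shown to integrate to zero in the coincidence limit. This is exactly where the Newtonian-limit estimate $\chi\to\chi^*$ of Theorem \ref{thm:chipositive} and its analog in Theorem \ref{th:fluxremainderexistenceandregularity} intervene: for $\eps\le\eps_0$, the signs and cancellations familiar from the classical case are preserved under the $O(\eps)$ perturbations, so that $D(\rho)$ does not vanish and the reduction argument of \cite{ChenLeFloch2} can be pushed through verbatim modulo these controlled corrections.
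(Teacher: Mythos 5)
Your proposal captures the broad strategy—apply fractional derivatives to the kernel commutation relation, exploit the explicit singularities from Proposition \ref{prop:relativisticexplicitsingularities}, and close using the positivity of $D(\rho)$—but it is missing the two mechanisms that actually make the paper's argument run, and the final reduction step is structured incorrectly.

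First, the paper does not simply apply $\partial_{s_2}^{\lambda+1}$ to the two-variable commutation relation and take a trace $s_2\to s_1$. Instead it first derives a \emph{three-variable cyclic identity}: applying \eqref{eq:relativisticcommutation} to the pairs $(s_2,s_3)$, $(s_3,s_1)$, $(s_1,s_2)$, multiplying respectively by $\overline{\chi(s_1)}$, $\overline{\chi(s_2)}$, $\overline{\chi(s_3)}$, and summing makes the right-hand side vanish identically by symmetry, leaving
$$
\overline{\chi(s_1)}\;\overline{\chi(s_2)\sigma(s_3)-\chi(s_3)\sigma(s_2)}+\text{(two cyclic terms)}=0.
$$
Only then are the two operators $P_2=\partial_{s_2}^{\lambda+1}$ and $P_3=\partial_{s_3}^{\lambda+1}$ applied. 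Second, and crucially, the coincidence limit $s_2,s_3\to s_1$ is taken by mollifying with \emph{two distinct mollifiers} $\phi_2\neq\phi_3$. The surviving singular contribution is a pairing of a Heaviside jump against a Dirac mass (Lemma \ref{lemma:measureheavisideproduct}), whose limit depends on the mollifier pair through the constant
$$
Y(\phi_2,\phi_3)=\int_{-1}^1\int_s^1\bigl(\phi_3(t-s-1)\phi_2(t)-\phi_2(t-s-1)\phi_3(t)\bigr)\,\dd t\,\dd s,
$$
which is nonzero precisely when $\phi_2\neq\phi_3$. If you differentiate twice in a single variable and mollify symmetrically, as your ``differentiating once more in $s$'' suggests, the analogous quantity vanishes and the identity degenerates to $0=0$; the whole point of the ``imbalance of regularity'' (\textit{cf.}~DalMaso--LeFloch--Murat) is that the mollifier asymmetry exposes a nontrivial limit. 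Your ``Lebesgue-point/averaging'' heuristic glosses over exactly this.

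Third, your endgame—``forces the restriction of $\nu$ to $\{\rho>0\}$ to be concentrated on a single characteristic; a second application obtained by interchanging the roles of the two Riemann invariants reduces the support further to a single point''—is not how the conclusion is reached, and as stated does not follow. The single identity yields
$$
\sum_\pm\int\overline{\chi(v\pm k(\rho))}\,M(\rho)D(\rho)\,\dd\nu(\rho,v)=0,
$$
and to conclude from this that $\supp\nu\cap\{\rho>0\}$ reduces to the vertex of the bounding triangle, you must know that $\overline{\chi(s)}>0$ for every $s\in(z_{\min},w_{\max})$. That positivity is \emph{not} automatic; it requires the separate Lemma \ref{lemma:relativisticvertexsupport}, showing via Corollary \ref{cor:fluxexpansioneigenvalues} and the trace measures $\mu_\pm$ that the vertex $(w_{\max},z_{\min})$ must lie in $\supp\nu$ (unless $\nu$ is supported in the vacuum). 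Once positivity is in hand, $\nu$ is supported in $V\cup\{(w_{\max},z_{\min})\}$; writing $\nu=\nu_V+\omega\delta_{(w_{\max},z_{\min})}$ and feeding this back into the commutation relation forces $\omega\in\{0,1\}$. There is no second ``interchanged'' application. Your proposal omits the vertex lemma entirely, which leaves an unfillable gap between the integral identity and the dichotomy in the statement.
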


\begin{proof}[Proof of Theorem {\rm \ref{thm:relativisticcompactness}}]
For convenience, we work in the $(\rho,v)$--coordinates.
As the function sequence $(\rho^\de, v^\de)$ with $v^\de=v(u^\de)$
is uniformly bounded in $L^\infty(\R^2_+)$, we may
extract a weakly-star convergent subsequence (still labelled as) $(\rho^\de, v^\de)\weakstarto (\rho,v)$
in $L^\infty(\R^2_+)$.
By the fundamental theorem
of Young measures (\textit{cf}.~\cite{Ball}),
we may associate, for {\it a.e.} $(t,x)\in\R^2_+$,
a probability
measure $\nu_{t,x}$ such that, for {\it a.e.} $(t,x)$, $\supp\,\nu_{t,x}\subset\{|v|\le v(M), 0\leq \rho\leq \rho_{M}\}$ and
$$
f(\rho^\de(t,x),v^\de(t,x))\,\stackrel{*}{\rightharpoonup} \,\overline{f}(t,x)= \int_{\mathbb{R}^2_+}f(\rho,v)\,\dd \nu_{t,x}(\rho, v)
\qquad \text{ in $L^\infty(\R^2_+)$}
$$
for any continuous function $f:\mathbb{R}^2_+\rightarrow\mathbb{R}$.

We take any two weak entropy pairs $(\eta_1,q_1)$ and $(\eta_2,q_2)$ and, for simplicity of notation,
we define $\eta_i^\de=\eta_i(\rho^\de(t,x),v^\de(t,x))$ and $q_i^\de=q_i(\rho^\de(t,x),v^\de(t,x))$ for $i=1,2$.
Then, by definition of the Young measure, we have
$$
\eta_1^\de\, q_2^\de-\eta_2^\de\, q_1^\de\, \stackrel{*}{\rightharpoonup}\, \overline{\eta_1q_2-\eta_2q_1}\qquad
\text{ in $L^\infty(\R^2_+)$ as $\de\rightarrow0$.}
$$
On the other hand, by the $H^{-1}$-compactness assumption \eqref{ass:relativisticentropydissipation},
we may apply the div-curl lemma ({\it cf.}  \cite{Murat1,Tartar}) to sequences $w_1^\de=(\eta_1^\de,q_1^\de)$ and
$w_2^\de=(q^\de_2,-\eta^\de_2)$ to obtain
$$
\eta^\de_1\,q^\de_2-\eta^\de_2\,q^\de_1\, \rightharpoonup \, \overline{\eta_1}\,\overline{q_2}-\overline{\eta_2}\,\overline{q_1}
\qquad\,\,\,\,\mbox{in the sense of distributions on $\R^2_+$ as $\de\rightarrow0$}.
$$
Thus, by uniqueness of weak limits, for {\it a.e.} $(t,x)$,
we have the Tartar commutation relation:
\begin{equation*}
\overline{\eta_1q_2-\eta_2q_1}
=\overline{\eta_1}\,\overline{q_2}-\overline{\eta_2}\,\overline{q_1}
\end{equation*}
for any two weak entropy pairs $(\eta_1,q_1)$ and $(\eta_2,q_2)$.

We conclude by applying Theorem \ref{thm:relativisticreduction} to show that,
for {\it a.e.} $(t,x)$, $\nu_{t,x}$ is either constrained to a point, so that $\nu_{t,x}=\de_{(\rho(t,x),v(t,x))}$,
or $\nu_{t,x}$ is supported in the vacuum line: $\supp\,\nu_{t,x}\subset\{\rho=0\}$.
In either case, changing back to the conserved variables $U$ implies that the Young measure $\nu_{t,x}$
is a point mass {\it a.e.}.
Then we
conclude the strong convergence as claimed.
\end{proof}

The rest of this section is devoted to the proof of Theorem \ref{thm:relativisticreduction}.
As a preliminary step, we extend a result of DiPerna \cite{DiPerna2} for the classical Euler equations
to the relativistic case.
This lemma tells us that, in the $(w,z)$--plane, the smallest triangle containing the support of the Young measure $\nu_{t,x}$
(considered as a measure in $(w,z)$) must have its vertex in the support of $\nu_{t,x}$.
Note that the vacuum line $\{\rho=0\}$ corresponds to line $\{w=z\}$.

\begin{lemma}\label{lemma:relativisticvertexsupport}
Let $\nu$ be a probability measure on set $\{w\geq z\}$ with non-trivial support away from the vacuum line,
i.e.,~${\normalfont \supp}\,\nu\cap\{w>z\}\neq\emptyset$, and let $\nu$ further satisfy
the commutation relation \eqref{eq:relativisticcommutation}
for all weak entropy pairs $(\eta_1,q_1)$ and $(\eta_2,q_2)$. Let
 $$
 \{(w,z)\,:\,z_{\min}\leq z\leq w\leq w_{\max}\}
 $$
 be the smallest triangle containing the support of $\nu$ in the $(w,z)$-plane. Then its vertex
 $(w_{\max},z_{\min})$ belongs to ${\normalfont \supp}\,\nu$.
\end{lemma}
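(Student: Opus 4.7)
The plan is to follow the classical contradiction argument of DiPerna \cite{DiPerna2}, adapted to the relativistic setting by using the entropy and entropy-flux kernels $\chi,\sigma$ constructed in Sections \ref{sec:entropykernel}--\ref{sec:entropyfluxkernel}. First, I would assume for contradiction that $(w_{\max},z_{\min})\notin\supp\nu$. By closedness, there exists $\delta>0$ such that the rectangle
$R_\delta:=(w_{\max}-\delta,w_{\max}]\times[z_{\min},z_{\min}+\delta)$
does not meet $\supp\nu$, while the minimality of the enclosing triangle and the nontriviality hypothesis still force $\supp\nu$ to contain points with $w=w_{\max}$ (and hence $z\ge z_{\min}+\delta$) as well as points with $z=z_{\min}$ (with $w\le w_{\max}-\delta$).

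Next I would test the Tartar identity \eqref{eq:relativisticcommutation} against the weak entropy pairs $(\chi(\rho,v-s_i),\sigma(\rho,v,s_i))$ for $s_1=z_{\min}+\delta/2$ and $s_2=w_{\max}-\delta/2$. The support property $\supp_{(w,z)}\chi(\rho,v-s)\subset\{z\le s\le w\}$, inherited from finite propagation speed, implies that $\chi(s_1)\chi(s_2)$ is supported where $z<s_1$ and $w>s_2$, i.e.\ on $R_\delta\cap\supp\nu=\emptyset$; hence $\overline{\chi(s_1)\chi(s_2)}=0$. Meanwhile, by Theorem \ref{thm:chipositive}, $\chi$ is strictly positive in the interior of its support, so both marginals $\overline{\chi(s_i)}$ are strictly positive since $\supp\nu$ intersects each open slab $\{z<s_i<w\}$.

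The key step is to invoke Corollary \ref{cor:fluxexpansioneigenvalues}, which yields
\begin{equation*}
\sigma(\rho,v,s)=\lambda_+(\rho,v)\chi(\rho,v,s)-(w-s)\,\rho_0(\rho)\,\tfrac{k'(\rho)}{k(\rho)}\,\chi(\rho,v,s)+R(\rho,v,s),
\end{equation*}
with $|R|\le C(k(\rho)^2-(v-s)^2)\chi$. A direct computation gives
\begin{equation*}
\chi(s_1)\sigma(s_2)-\chi(s_2)\sigma(s_1)=(s_2-s_1)\,\rho_0\tfrac{k'}{k}\,\chi(s_1)\chi(s_2)+\chi(s_1)R(s_2)-\chi(s_2)R(s_1),
\end{equation*}
so each term is pointwise dominated by a multiple of $\chi(s_1)\chi(s_2)$ and the left-hand side of \eqref{eq:relativisticcommutation} therefore vanishes. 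Expanding $\overline{\sigma(s_i)}$ via the same formula, the right-hand side reduces to leading order to
$\overline{\chi(s_1)}\,\overline{\chi(s_2)}\bigl(\langle\lambda_+\rangle_{s_2}-\langle\lambda_+\rangle_{s_1}\bigr)$,
where $\langle\lambda_+\rangle_{s_i}:=\overline{\lambda_+\chi(s_i)}/\overline{\chi(s_i)}$. The genuine-nonlinearity assumption \eqref{ass:genuine-nonlinearity} makes $\lambda_+$ strictly increasing in $w$, and the separation of the supports of $\chi(s_2)$ and $\chi(s_1)$ on $\supp\nu$ by at least $\delta/2$ in $w$ then gives a strictly positive lower bound of order $\delta$ for this bracket, contradicting the vanishing of the left-hand side.

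The main obstacle will be to ensure that the lower-order terms on the right-hand side, arising from the $R$-remainder, from the $w$--$z$ coupling in $\lambda_+$ and in the coefficient $\rho_0 k'/k$, and from the $O(\eps)$ relativistic perturbation of the Newtonian kernel supplied by Theorem \ref{thm:chipositive}, do not overwhelm the positive $O(\delta)$ leading contribution. I would absorb these corrections using the H\"older bounds on the remainders $g,h$ from Theorems \ref{thm:entkernelexistence} and \ref{th:fluxremainderexistenceandregularity}, together with the smallness of $\eps$ guaranteed by Proposition \ref{prop:Dcoefficient}. A minor secondary subtlety is that $\sigma(\rho,v,s)$ is only distributionally defined in $s$, which I would handle by first testing the commutation relation against narrow bump functions $\psi_i$ supported around $s_i$ and then passing to the limit.
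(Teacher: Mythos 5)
Your overall outline matches the paper's: argue by contradiction, test the Tartar relation against the kernel pair $(\chi(\cdot,s_i),\sigma(\cdot,s_i))$, observe that the left-hand side vanishes by the support disjointness, and then use Corollary \ref{cor:fluxexpansioneigenvalues} to reduce the right-hand side to a comparison of wave speeds. However, the way you execute the last step contains a genuine gap.

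You expand $\sigma(s_i)$ around $\lambda_+\chi(s_i)$ for \emph{both} $i=1,2$ and claim the right-hand side ``reduces to leading order'' to $\overline{\chi(s_1)}\,\overline{\chi(s_2)}\big(\langle\lambda_+\rangle_{s_2}-\langle\lambda_+\rangle_{s_1}\big)$. This is not correct: the correction to $\lambda_+\chi(s)$ is $-(w-s)\rho_0\tfrac{k'}{k}\chi(s)$, and on the set $\supp\nu\cap\{z\le s_1\le w\}$ with $s_1=z_{\min}+\delta/2$ the quantity $w-s_1$ is bounded below only by $0$ and above by $w_{\max}-\delta-z_{\min}-\delta/2$, i.e.\ it is $O(1)$, not $O(\delta)$. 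So the term $\langle(w-s_1)\rho_0\tfrac{k'}{k}\rangle_{s_1}$ is not a small perturbation of the putative $O(\delta)$ bracket and cannot be absorbed. The paper avoids this precisely by expanding $\sigma(s_2)$ around $\lambda_+\chi(s_2)$ (correction $\sim w-s_2$, small near $w_{\max}$) but $\sigma(s_1)$ around $\lambda_-\chi(s_1)$ (correction $\sim s_1-z$, small near $z_{\min}$), and then passing $s_2\to w_{\max}$, $s_1\to z_{\min}$ to kill both error terms exactly, leaving $\langle\mu_+,\lambda_+\rangle=\langle\mu_-,\lambda_-\rangle$ with $\mu_\pm$ probability trace measures on the two edges of the triangle.

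A secondary problem is the sign argument itself. After the correct expansion, the inequality to contradict is $\lambda_+(w_{\max},z_1)-\lambda_-(w_2,z_{\min})>0$ for $z_1\ge z_{\min}$, $w_2\le w_{\max}$, which the paper obtains by a direct computation using $|u|<1/\sqrt\eps$ and $\sqrt{p'}<1/\sqrt\eps$; it is essentially strict hyperbolicity plus the structure of $\lambda_\pm$, not genuine nonlinearity. Your plan instead invokes ``$\lambda_+$ strictly increasing in $w$'' (genuine nonlinearity). That monotonicity holds at fixed $z$, but here you are comparing $\lambda_+$ at two points where \emph{both} $w$ and $z$ change (on the $\chi(s_2)$-support you have larger $w$ \emph{and} larger $z$), so $\partial_w\lambda_+>0$ alone does not give the sign; $\partial_z\lambda_+$ is not controlled by the hypotheses \eqref{ass:stricthyperbolicity}--\eqref{ass:genuine-nonlinearity}. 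Finally, your last remark about $\sigma(\rho,v,s)$ being ``only distributionally defined in $s$'' is not accurate: for $\rho>0$ the kernels $\chi,\sigma$ are continuous in $s$ (it is their fractional derivatives $\partial_s^{\lambda+1}\chi,\partial_s^{\lambda+1}\sigma$ that are measures), so the commutation relation can be used pointwise in $s$ and no mollification is needed at this stage.
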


\begin{proof}
We argue by contradiction. Suppose that there exists $\alpha>0$ such that
$$
\supp\,\nu\cap\big([w_{\max}-\alpha,w_{\max}]\times[z_{\min},z_{\min}+\alpha]\big)=\emptyset.
$$
From the commutation relation \eqref{eq:relativisticcommutation} (dropping the test functions and working directly with the kernels),
we have
\begin{equation}\label{commut}
\frac{\overline{\chi(s_1)\sigma(s_2)-\chi(s_2)\sigma(s_1)}}{\overline{\chi(s_1)}\,\,\overline{\chi(s_2)}}
=\frac{\overline{\sigma(s_2)}}{\overline{\chi(s_2)}}-\frac{\overline{\sigma(s_1)}}{\overline{\chi(s_1)}}
\qquad\text{for $s_1,s_2\in\R$ such that $\overline{\chi(s_1)}\,\overline{\chi(s_2)}\ne 0$},
\end{equation}
where $\overline{\chi(s_j)}=\int\chi(\rho,v,s_j)\,\dd\nu(\rho, v), j=1,2$.
Setting $s_-:=z_{\min}$ and $s_+:=w_{\max}$,
we consider $s_1$ and $s_2$ such that $0<s_+-s_2<\alpha$ and $0<s_1-s_-<\alpha$.
As
$\supp \,\chi(s)=\supp\,\sigma(s)=\{z\leq s\leq w\}$,
we see that $(w,z)\in\supp(\chi(s_1)\sigma(s_2))$ implies that $(w,z)\not\in\supp\,\nu$.
Arguing in the same way for $\chi(s_2)\sigma(s_1)$,
we see that the left-hand side of \eqref{commut} vanishes.

We recall from Corollary \ref{cor:fluxexpansioneigenvalues} that
\beqs
 (\sigma-\lambda_{\pm}\chi)(\rho,v-s)=\big(\mp k-(v-s)\big)\rho_0\frac{k'}{k}\chi(\rho,v-s)+R_\pm(\rho,v-s),
\eeqs
where $|R_\pm(\rho,v-s)|\leq C|k(\rho)^2-(v-s)^2|\chi(\rho,v-s)$. Then
\begin{equation}\label{eq:eigenvaluedecomp}
\frac{\overline{\sigma(s)}}{\overline{\chi(s)}}
=\frac{\overline{\lambda_{\pm}\chi(s)}}{\overline{\chi(s)}}
+\frac{\overline{(\mp k-(v-s))\rho_0\frac{k'}{k}\chi(s)}}{\overline{\chi(s)}}+\frac{\overline{ R_\pm(s)}}{\overline{\chi(s)}}.
\end{equation}
We define the probability trace measures $\mu_+$ and $\mu_-$ by
\begin{align*}
&\frac{\overline{f\chi(s_2)}}{\overline{\chi(s_2)}}\rightarrow\langle\mu_+,f(w_{\max},\cdot)\rangle:=\int f(w_{\max},z)\,\dd \mu_+(z)
\qquad \text{ as }s_2\rightarrow s_+,\\[1mm]
&\frac{\overline{f\chi(s_1)}}{\overline{\chi(s_1)}}\rightarrow\langle\mu_-,f(\cdot,z_{\min})\rangle
:=\int f(w,z_{\min})\,\dd \mu_-(w) \qquad\,\, \text{ as }s_1\rightarrow s_-,
\end{align*}
for any continuous function $f=f(w,z)$.
It is now standard to see that these measures are well defined (\textit{cf}.~\cite{DiPerna2}).
We note that
\begin{align*}
&\left|\frac{\overline{(-k-(v-s_2))\rho_0\frac{k'}{k}\chi(s_2)}}{\overline{\chi(s_2)}}\right|
\leq C\max_{(w,z)\in\supp\,\nu\,\cap\{w\geq s_2\}}|w-s_2|\rightarrow 0 \qquad \text{ as }s_2\rightarrow s_+,\\
&\left|\frac{\overline{(k-(v-s_1))\rho_0\frac{k'}{k}\chi(s_1)}}{\overline{\chi(s_1)}}\right|
\leq C\max_{(w,z)\in\supp\,\nu\,\cap\{z\leq s_1\}}|z-s_1|\rightarrow 0 \qquad\quad \text{ as }s_1\rightarrow s_-.
\end{align*}
Moreover,
$$
\left|\frac{\overline{R_\pm(s)}}{\overline{\chi(s)}}\right|
\leq C\max_{(w,z)\in\,\supp\,\nu}[k^2-(v-s)^2]_+\leq C\max_{\supp\,\nu}|w-s_2||z-s_1|\rightarrow 0
$$
as either $s\to s_+$ or $s\to s_-$.
Thus, we deduce from \eqref{eq:eigenvaluedecomp} that
\begin{equation}\label{eq:turtle}
  \langle\mu_+,\lambda_+\rangle-\langle\mu_-,\lambda_-\rangle=0.
\end{equation}
Now suppose that $(w_{\max},z_1)$ and $(w_2,z_{\min})$ are the points on the edges of the triangle
with the coordinates given by $z_1=v_1-k(\rho_1)$ and
$w_2=v_2+k(\rho_2)$.
Observe that necessarily $v_1\geq v_2$ with equality only at the vertex of the triangle.
Now we calculate
\beqas
&\la_+(w_{\max},z_1)-\la_-(w_2,z_{\min})\\
&=\,\frac{(u_1-u_2)\big(1+\eps\sqrt{p'(\rho_1)}\sqrt{p'(\rho_2)}\big)
+\big(\sqrt{p'(\rho_1)}+\sqrt{p'(\rho_2)}\big)(1-\eps u_1u_2)}{\big(1-\eps u_1\sqrt{p'(\rho_1)}\big)\big(1-\eps u_2\sqrt{p'(\rho_2)}\big)}
>0,
\eeqas
as either $u_1>u_2$ or $u_1=u_2$ and $\rho_1=\rho_2>0$.
As both $\mu_+$ and $\mu_-$ are probability measures,
this gives the desired contradiction to \eqref{eq:turtle}.
\end{proof}

To prove Theorem \ref{thm:relativisticreduction}, we exploit the existence of an \it imbalance of regularity \rm in
the commutation relation \eqref{eq:relativisticcommutation}, following the approach
as developed in \cite{ChenLeFloch,LionsPerthameSouganidis} (also see \cite{DiPerna2}).

We write $P_j:=\partial_{s_j}^{\lambda+1}$, $j=2,3$, for the fractional derivative operators, and
define $\chi_j=\chi(\rho,v-s_j)$, $j=2,3$, and similarly for the other terms.
Then distributions $\overline{P_j\chi(s_j)}, j=2,3$, are defined as acting on test functions $\psi\in C_{\rm c}^\infty(\R)$ by
$$
\langle \overline{P_j\chi(s_j)},\psi\rangle
=-\int_\R\overline{\partial_{s_j}^\la\chi(s_j)}\psi'(s_j)\,\dd s_j \qquad \mbox{for $j=2,3$.}
$$
We choose standard (but distinct) mollifiers
$\phi_j\in C_{\rm c}^\infty(\R)$, $j=2,3$,
so that $\phi_j(s_j)\geq0$, $\int_\mathbb{R}\phi_j(s_j)\,\dd s_j=1$, and $\supp\,\phi_j(s_j)\subset (-1,1)$,
and set $\phi_j^\dee(s_j)=\frac{1}{\dee}\phi_j(\frac{s_j}{\dee})$ for $\dee>0$.

The strategy of the proof is first to apply operators $P_2$ and $P_3$ to the commutation
relation \eqref{eq:relativisticcommutation} and then to mollify them.
To make clear the claimed imbalance of regularity,
we make use of the fact that the limit of a mollified product of a measure with a BV function depends on the choice
of mollifiers used (\textit{cf}.~\cite{DMLFM}).
Mollifying the entropy and entropy-flux kernels and taking $s_2,s_3\to s_1$,
we obtain the expressions of form:
\beq\label{eq:mollificationexpression}
\overline{P_j\chi^\de_j}=\overline{P_j\chi_j}*\phi_j^\dee(s_1)
=\int\overline{\partial_{s_j}^\la\chi(s_j)}\frac{1}{\dee^{2}}\phi_j'(\frac{s_1-s_j}{\dee})\,\dd s_j
\qquad\mbox{for $j=2,3$}.
\eeq
Once we have differentiated and mollified the commutation relation \eqref{eq:relativisticcommutation},
we pass $\dee\to0$, relying on the properties of cancellation of singularities of the entropy and entropy-flux kernels
to obtain a limit depending on $\phi_2$ and $\phi_3$.
These properties are stated in the following lemma.

\begin{lemma}[Cancellation of singularities]\label{lemma:cancellationI}
As $\dee\to0$, we have the following convergence properties{\rm :}
\begin{enumerate}\vspace{-2mm}
\item[{\rm (i)}]  For $j=2,3$, functions $\chi_1P_j\sigma^\dee_j-\sigma_1P_j\chi_j^\dee$ are H\"{o}lder continuous in $(\rho,v,s_1)$
 and uniformly in $\dee$.
 Moreover, there exists a continuous function $X_1=X(\rho,v,s_1)$, independent of the choice of mollifying sequence,
 such that $\chi_1P_j\sigma^\dee_j-\sigma_1P_j\chi_j^\dee\rightarrow X_1$
 uniformly in $(\rho,v,s_1)$ when $\dee\rightarrow0$.

\smallskip
\item[{\rm (ii)}] $P_2\chi_2^\dee \,P_3\sigma_3^\dee-P_3\chi_3^\dee \,P_2\sigma_2^\dee$ are uniformly bounded
 measures such that, as $\dee\rightarrow0$,
$$
 P_2\chi_2^\dee\, P_3\sigma_3^\dee-P_3\chi_3^\dee\, P_2\sigma_2^\dee
 \,\rightharpoonup\,Y(\phi_2,\phi_3)M(\rho)D(\rho)\sum_\pm (K^\pm)^2\delta_{s_1=v\pm k(\rho)}
$$
weakly-star in measures in $s_1$ and uniformly in $(\rho,v)$,
where
$$
 Y(\phi_2,\phi_3)=\int_{-1}^1\int_{s}^{1}\big(\phi_3(t-s-1)\phi_2(t)-\phi_2(t-s-1)\phi_3(t)\big)\,\dd t\,\dd s,
$$
$M(\rho)=(\la+1)c_{*,\la}^{-2}k(\rho)^{2\la}$ for $\rho>0$, and $D(\rho)$ is as in Proposition {\rm \ref{prop:Dcoefficient}}.
\end{enumerate}
\end{lemma}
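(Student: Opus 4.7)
The approach leverages the explicit singularity decomposition of Proposition \ref{prop:relativisticexplicitsingularities}. For $j=2,3$, I would write
\[
P_j\chi_j^\de(s_1) = \sum_\pm \alpha^\pm(\rho)\phi_j^\de(s_1 - v \mp k(\rho)) + H^I_j(\rho,v,s_1;\de),
\]
and analogously for $P_j\sigma_j^\de$ with coefficients $\tilde\alpha^\pm(\rho)$. Combining Proposition \ref{prop:relativisticexplicitsingularities} with Corollary \ref{cor:fluxexpansioneigenvalues}, the delta coefficients obey $\tilde\alpha^\pm(\rho) = \la_\mp\,\alpha^\pm(\rho)$, with $\la_\mp$ evaluated at the corresponding support endpoints $v = \pm k(\rho)$. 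The remainders $H^I_j, H^{II}_j$ combine the H\"older pieces $e^I, e^{II}$ of Proposition \ref{prop:relativisticexplicitsingularities} together with subleading bounded Heaviside-type jumps coming from $\partial_v^{\la+1}(a_2 G_{\la+1})$ and $\partial_v^{\la+1}(b_2 G_{\la+1})$ in the expansions of Theorems \ref{thm:entropykernelmain} and \ref{thm:entropyfluxkernelmain}.

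For part (i), after this decomposition, each singular piece in $\chi_1 P_j\sigma_j^\de - \sigma_1 P_j\chi_j^\de$ near $s_1 = v\pm k$ takes the form $\alpha^\pm(\chi_1 \la_\mp - \sigma_1)\phi_j^\de$. By Corollary \ref{cor:fluxexpansioneigenvalues}, $|\chi_1\la_\mp - \sigma_1| \leq C|k(\rho)^2 - (v-s_1)^2|\chi_1$, which on the $\de$-neighborhood of the spike gives $O(\de^{\la+1})$; weighted by the mollified delta of magnitude $O(\de^{-1})$, this vanishes uniformly at rate $O(\de^\la)$. What survives is a combination of terms involving only the H\"older remainders $e^I, e^{II}$, which converge uniformly as $\de\to 0$, yielding the uniform H\"older continuity of $\chi_1 P_j\sigma_j^\de - \sigma_1 P_j\chi_j^\de$ in $(\rho,v,s_1)$ and a mollifier-independent limit $X_1$.

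For part (ii), the leading delta-delta interactions at each endpoint contribute $(\alpha^\pm)^2\la_\mp(\phi_2^\de\phi_3^\de - \phi_3^\de\phi_2^\de)=0$ and cancel identically. The nontrivial weak-$*$ limit comes from the cross-interactions between the mollified delta in one factor and the mollified Heaviside-type jump in the other. Invoking the DMLFM theory \cite{DMLFM} for products of a measure with a BV function, each such cross-interaction, of the form $(\phi_2^\de*\mu)(\phi_3^\de*J) - (\phi_3^\de*\mu)(\phi_2^\de*J)$, converges to exactly $Y(\phi_2,\phi_3)\,\mu(\{0\})\,J^{\rm jump}\,\de_0$, where $Y$ is the antisymmetric mollifier functional in the statement. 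Matching the delta and jump coefficients arising from $\chi$ and $\sigma-\tilde u\chi$ via Theorems \ref{thm:entropykernelmain} and \ref{thm:entropyfluxkernelmain}, the net coefficient at each $\de_{s_1=v\pm k}$ assembles into $(K^\pm)^2 M(\rho) D(\rho)$, with $M(\rho)=(\la+1)c_{*,\la}^{-2}k(\rho)^{2\la}$ coming from the Gel'fand-Shilov normalization of $\partial_v^{\la+1}G_\la$ and $D(\rho) = a_1 b_1 - 2k^2(a_1 b_2 - a_2 b_1)$ the antisymmetric bilinear combination of leading and subleading coefficients from Proposition \ref{prop:Dcoefficient}.

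The main obstacle will be the careful bookkeeping of the subleading Heaviside-type structure of $P_j\chi_j$ and $P_j\sigma_j$ and verification that the antisymmetric bilinear form produced by the DMLFM product rule collapses precisely onto $D(\rho)$; the boundary-weighted estimates on $e^I, e^{II}$ from Proposition \ref{prop:relativisticexplicitsingularities} are essential to ensure that all non-spike contributions land in H\"older (for (i)) or mollifier-independent (for (ii)) residual terms.
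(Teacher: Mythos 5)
Your proposal tracks the paper's own proof closely: both decompose $P_j\chi_j$ and $P_j(\sigma_j-\tilde u\chi_j)$ into explicit Dirac parts plus Heaviside/H\"older remainders via Proposition \ref{prop:relativisticexplicitsingularities}, both use Corollary \ref{cor:fluxexpansioneigenvalues} for the linear vanishing $|\sigma_1-\la_\pm\chi_1|\lesssim |s_1-(v\pm k)|\,\chi_1$ in part (i), and both invoke the DMLFM-type result (Lemma \ref{lemma:measureheavisideproduct}) for the $\delta$--Heaviside cross products in part (ii), with the identity $\partial_s^{\la+1}G_{\la+1}=[k^2-(v-s)^2]_+\partial_s^{\la+1}G_{\la}-2(\la+1)(s-v)\partial_s^{\la}G_{\la}$ feeding into the assembly of $D(\rho)$.

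There is, however, a gap in your treatment of the delta--delta products in part (ii). You claim the leading $\delta$--$\delta$ interactions ``cancel identically'' because $\phi_2^\de\phi_3^\de-\phi_3^\de\phi_2^\de=0$. This only kills the same-endpoint products (both spikes at $s_1=v+k$, or both at $s_1=v-k$). The cross-endpoint products, of the form $\phi_2^\de(\cdot-(v+k))\phi_3^\de(\cdot-(v-k))$ minus its $2\leftrightarrow3$ swap, are \emph{not} algebraically zero. For $\rho>0$ fixed they vanish once $\de<k(\rho)$ by disjoint support, but the lemma asserts convergence \emph{uniformly in} $(\rho,v)$, so one must handle $\rho\to0$ where the two endpoints merge. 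The paper does this by the estimate
\[
k'(\rho)^{-1}\Big|\int(w-z)\phi_2^\de(s_1-w)\phi_3^\de(s_1-z)\psi(s_1)\,\dd s_1\Big|\leq C\,\de^{2\la},
\]
exploiting the coefficient scaling $a_1b_1\sim k^{-2\la-1}$ against $(w-z)^{1+2\la}\sim k^{1+2\la}$ and compact mollifier support to kill the $(w-z)/\de$ factor. Without this piece, the ``uniformly in $(\rho,v)$'' in the statement is not justified, and the downstream integration against $\dd\nu(\rho,v)$ in the reduction argument (Theorem \ref{thm:relativisticreduction}) would not go through near the vacuum line.
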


The proof of this lemma is analogous to \cite[Lemma 4.2--4.3]{ChenLeFloch}.
For the sake of completeness, we include a proof here in the relativistic setting.
It is based on the structure of the fractional derivatives of the kernels given
in Proposition \ref{prop:relativisticexplicitsingularities}.

\begin{proof}[Proof of Lemma {\rm \ref{lemma:cancellationI}(i)}]
Since we only require the fine properties of the leading order term in each of the two expansions,
we set
\beqas
\tilde{g}(\rho,v-s_1)=&\,a_2(\rho)G_{\la+1}(\rho,v-s_1)+g(\rho,v-s_1),\\
\tilde{h}(\rho,v-s_1)=&-(v-s_1)b_2(\rho)G_{\la+1}(\rho,v-s_1)+h(\rho,v-s_1).
\eeqas
With this notation, recalling that $b_1(\rho)=\rho_0(\rho)\frac{k'(\rho)}{k(\rho)}a_1(\rho)$,
we employ the expansions of Theorems \ref{thm:entropykernelmain}--\ref{thm:entropyfluxkernelmain} to write the product as
$$
\chi_1 P_j\sigma_j^\dee-P_j\chi_j^\dee \sigma_1=E^{I,\dee}+E^{II,\dee}+E^{III,\dee},
$$
where
\beqas
&E^{I,\dee}:=\,a_1\rho_0 k^{-1}k'^\half e^{\tilde{a}}G_{\la,1}\sum_{\pm} K^\pm ((s_j-s_1)\delta_{s_j=v\pm k})*\phi_j^\dee,\\
&E^{II,\dee}:=\,\rho_0k^{-1}k'^\half e^{\tilde{a}}\sum_\pm K^\pm \tilde{g}_1((s_j-s_1)\delta_{s_j=v\pm k})*\phi_j^\dee
  -k'^{-\half}e^{\tilde{a}}\sum_\pm K^\pm \tilde{h}_1\delta_{s_j=v\pm k}*\phi_j^\dee,\\
&E^{III,\dee}:=\,(a_1 G_{\la,1}+\tilde{g}_1)e^{II}_j*\phi_j^\dee-\big((s_1-v)b_1 G_{\la,1}+\tilde{h}_1\big)e^{I}_j*\phi_j^\dee.
\eeqas
Using the bounds given by Lemma \ref{lemma:k-asymptotics} for $k(\rho)$ and $k'(\rho)$, we bound $E^{I,\dee}$ by
\beqas
|E^{I,\dee}(\rho,v,s_1)|\leq&\,C\rho^{\frac{1-\th}{2}}[k-(v-s_1)]_+^\la[k+(v-s_1)]_+^\la\sum_\pm K^\pm|s_1-v\mp k|\phi_j^\dee(s_1-v\mp k)\\
\leq&\, C\rho^{\frac{1-\th}{2}}\sum_\pm K^\pm |s_1-v\mp k|^{\la+1}\dee^{-1}\phi_j(\frac{s_1-v\mp k}{\dee})\\
\leq&\,C\rho^{\frac{1-\th}{2}}\dee^\la\to0
\eeqas
locally uniformly in $(\rho,v,s_1)$, as $\dee\to0$,
where we have used the fact that $\supp\,\phi_j\subset(-1,1)$.

Next, for $E^{II,\dee}$, we make the bound{\rm :}
\beqas
|E^{II,\dee}(\rho,v-s_1)|\leq C\rho^{\frac{1-\th}{2}}[k^2-(v-s_1)^2]_+^{\la+1}\sum_\pm \phi_j^\dee(s_1-v\mp k)
\leq \,C\rho^{\frac{1-\th}{2}}\dee^\la\to0.
\eeqas
Finally, we consider the remainder term, $E^{III,\dee}(\rho,v-s_1)$.
Using bounds \eqref{ineq:eIbound}--\eqref{ineq:eIIbound}, we observe
\beqs
|s_1-v||e^I_j(\rho,v-s_1)|+|e^{II}_j(\rho,v-s_1)|
\leq C\rho^{\frac{1-\th}{2}+2\al\th}G_{-\al}(\rho,v-s_1) \qquad \mbox{ for $\al\in(0,1)$}.
\eeqs
Clearly, this is not H\"older continuous up to the boundary of its support.
However, in the region $\{|k^2-(v-s_1)^2|\leq\De\}$ for $\De>0$, we may bound
\beqs
|E^{III,\dee}(\rho,v-s_1)|\leq C\rho^{\frac{1-\th}{2}+2\al\th}G_{\la-\al}(\rho,v-s_1)\leq \rho^{3\frac{1-\th}{2}+2\al\th}\De^{\la-\al},
\eeqs
which may be made arbitrarily small by taking $0<\al<\min\{1,\la\}$ and $\De$ small.
On the other hand, in the complement region, $\{|k^2-(v-s_1)^2|>\De\}$ so that we conclude that
$G_\la$, $e_j^I$, and $e_j^{II}$ are all uniformly H\"older continuous, and hence  $E^{III,\dee}$ converges to a H\"older continuous limit
on this set, independent of the choice of mollifying sequence.
\end{proof}

The proof of Lemma \ref{lemma:cancellationI}(ii)
rests
on the observation of \cite{LionsPerthameTadmor} (and of \cite{DMLFM} in greater generality)
that the limit of a regularized product of a function of
bounded variation with a measure depends on the choice of regularization.
In particular, for the case of the product of a Heaviside function and a Dirac mass, we use the following lemma.

\begin{lemma}\label{lemma:measureheavisideproduct}
For any $m_2,m_3\in\mathbb{R}$,
$$
\left(H_{s_2=m_2}*\phi_2^\dee\right)\left(\delta_{s_3=m_3}*\phi_3^\dee\right)
\rightharpoonup\Omega^{\phi_2,\phi_3}(m_2,m_3)\delta_{s_1=m_3}
$$
weak-star in measures as $\dee\to0$, where
 \begin{equation*}
  \Omega^{\phi_2,\phi_3}(m_2,m_3):=\begin{cases}
                    0 &\text{ if $m_2>m_3$,}\\[1mm]
                    \int_{-1}^1\int_s^1\phi_2(t-s-1)\phi_3(t)\,\dd t\,\dd s &\text{ if $m_2=m_3$,}\\[1mm]
                    1 &\text{ if $m_2<m_3$}.
                   \end{cases}
 \end{equation*}
\end{lemma}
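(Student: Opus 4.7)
The plan is to test the product against an arbitrary $\psi\in C_c(\R)$ and compute the limit of the resulting scalar integral, splitting into three cases depending on the sign of $m_2-m_3$. First I unravel the convolutions. Writing $H_{s_2=m_2}(s_2)=H(s_2-m_2)$ for the Heaviside jumping from $0$ to $1$ at $m_2$, a direct change of variable gives
$$
(H_{s_2=m_2}*\phi_2^\de)(s_1)=\Phi_2\Big(\tfrac{s_1-m_2}{\de}\Big),\qquad \Phi_2(\tau):=\int_{-\infty}^\tau \phi_2(v)\,\dd v,
$$
while $(\de_{s_3=m_3}*\phi_3^\de)(s_1)=\de^{-1}\phi_3\big(\tfrac{s_1-m_3}{\de}\big)$. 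Since $\supp\phi_2,\supp\phi_3\subset(-1,1)$, the first factor is identically $0$ for $s_1<m_2-\de$ and identically $1$ for $s_1>m_2+\de$, while the second factor is supported in $|s_1-m_3|<\de$.

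For the case $m_2>m_3$, taking $\de<\tfrac{1}{2}(m_2-m_3)$ makes the two supports disjoint, so the product vanishes identically and the limit is $0$. For the case $m_2<m_3$, the same smallness of $\de$ forces $\Phi_2\big(\tfrac{s_1-m_2}{\de}\big)=1$ on the support of the second factor, so the product collapses to $\phi_3^\de(s_1-m_3)$, which converges weak-star to $\de_{s_1=m_3}$. These two subcases are routine.

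The delicate subcase $m_2=m_3=:m$ is the only substantive step; this is where the mollifier dependence appears and reflects the standard DiPerna–Majda ambiguity in the product of a BV function with a measure. I rescale by $\tau=(s_1-m)/\de$, obtaining
$$
\int_\R (H_{s_2=m}*\phi_2^\de)(s_1)(\de_{s_3=m}*\phi_3^\de)(s_1)\,\psi(s_1)\,\dd s_1
=\int_\R \Phi_2(\tau)\,\phi_3(\tau)\,\psi(m+\de\tau)\,\dd\tau,
$$
and pass $\de\to0$ by dominated convergence (the integrand is bounded by $\|\psi\|_{L^\infty}\,\phi_3$ and has compact support in $\tau$) to conclude that the limit is $\psi(m)\int_{-1}^1\phi_3(t)\int_{-1}^t\phi_2(v)\,\dd v\,\dd t$. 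This produces the weak-star limit $\Omega^{\phi_2,\phi_3}(m,m)\,\de_{s_1=m}$.

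Finally, I reconcile my expression $\int_{-1}^1\phi_3(t)\int_{-1}^t\phi_2(v)\,\dd v\,\dd t$ with the stated formula $\int_{-1}^1\int_s^1\phi_2(t-s-1)\phi_3(t)\,\dd t\,\dd s$ by swapping the order of integration to write the latter as $\int_{-1}^1\phi_3(t)\int_{-1}^t\phi_2(t-s-1)\,\dd s\,\dd t$ and then substituting $v=t-s-1$ in the inner integral; the two expressions agree. The only true obstacle is organizational bookkeeping in this final matching step and in the coincident case — there is no analytic difficulty, since everything reduces to a one-variable rescaling argument once the two convolutions are written down.
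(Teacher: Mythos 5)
Your proof is correct, and all three cases (support separation for $m_2\neq m_3$, rescaling plus dominated convergence for $m_2=m_3$, and the Fubini-plus-substitution reconciliation of $\int_{-1}^1\phi_3(t)\Phi_2(t)\,\dd t$ with the stated double integral) check out. The paper itself does not give a proof of this lemma — it is stated as a known consequence of the Dal Maso--LeFloch--Murat theory of nonconservative products, cited immediately before — so you have in fact supplied a complete elementary argument where the paper defers to a reference; your direct computation is a perfectly good replacement and makes the mollifier dependence in the coincident case transparent. One small point worth making explicit is that the orientation of $H_{s_2=m_2}$ (jump from $0$ to $1$) is the one forced by consistency with the stated values $\Omega=0$ for $m_2>m_3$ and $\Omega=1$ for $m_2<m_3$; you assume this correctly, but since the paper's notation does not fix it, a one-line remark to that effect would close the loop.
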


\begin{proof}[Proof of Lemma {\rm \ref{lemma:cancellationI}(ii)}]
We employ the expansions of Theorems \ref{thm:entropykernelmain} and \ref{thm:entropyfluxkernelmain} to obtain
\beqas
&P_2\chi(\rho,v-s_2) P_3\sigma(\rho,v,s_3)-P_3\chi(\rho,v-s_3) P_2\sigma(\rho,v,s_2)\\
&= P_2\chi_2 P_3(\sigma_3-\tilde{u}\chi_3)-P_3\chi_3 P_2(\sigma_2-\tilde{u}\chi_2)\\
&= (a_1P_2G_{\la,2}+a_2P_2G_{\la+1,2}+P_2g_2)\\
&\quad\,\,\times \big((s_3-v)(b_1P_3G_{\la,3}+b_2P_3G_{\la+1,3})+P_3h_3+(\la+1)b_1\partial_{s_3}^\la G_{\la,3}+(\la+1)b_2\partial_{s_3}^\la G_{\la+1,3}\big)\\
&\quad -(a_1P_3G_{\la,3}+a_2P_3G_{\la+1,3}+P_3g_3)\\
&\quad\,\,\,\,\,\, \times \big((s_2-v)(b_1P_2G_{\la,2}+b_2P_2G_{\la+1,2})
 +P_2h_2+(\la+1)b_1\partial_{s_2}^\la G_{\la,2}+(\la+1)b_2\partial_{s_2}^\la G_{\la+1,2}\big)\\
&=E^I+E^{II}+E^{III},
\eeqas
where we have decomposed the expression as
\beqas
E^{I}:=&\,(s_3-s_2)a_1b_1P_2G_{\la,2}P_3G_{\la,3},\\
E^{II}:=&\,a_1P_2G_{\la,2}\big((s_3-v)b_2P_3G_{\la+1,3}+(\la+1)b_1\partial_{s_3}^\la G_{\la,3}\big)\\
&-a_1P_3G_{\la,3}\big((s_2-v)b_2P_2G_{\la+1,2}+(\la+1)b_1\partial_{s_2}^\la G_{\la,2}\big)\\
&+a_2b_1\big(P_2G_{\la+1,2}(s_3-v)P_3G_{\la,3}-P_3G_{\la+1,3}(s_2-v)P_2G_{\la,2}\big),
\eeqas
and $E^{III}$ is the remainder.

We now take mollification for the mollifiers defined above. This yields
$$
P_2\chi_2^\dee P_3\sigma_3^\dee-P_3\chi_3^\dee P_2\sigma_2^\dee
=\big(E^I+E^{II}+E^{III}\big)*\phi_2^\dee*\phi_3^\dee.
$$
We recall that, as our mollified expressions are evaluated at $s_1$ (compare \eqref{eq:mollificationexpression}),
this is now a function of $(\rho,v,s_1)$ only.
From symmetry considerations, the limit of $E^{III,\dee}$ is $0$  as $\dee\to0$, since this term contains only the products
of measures with H\"older continuous functions and more regular products.
This convergence is uniformly in $(\rho,v)$ and weak-star in measures in $s_1$.

We consider next the most singular terms, arising in $E^{I,\dee}=E^I*\phi_2^\dee*\phi_3^\dee$.
From Proposition \ref{prop:relativisticexplicitsingularities},
we see that this expression involves products of measures, products of measures with $L^p$ functions,
and products of $L^p$ functions.
Again, by symmetry considerations, the last group of these terms vanishes in the limit as $\dee\to0$, so that we focus only on the first two.
Observe first that a typical product of measures is of the form:
$$
(s_3-s_2)a_1b_1k'(\rho)^{-1}e^{2\tilde{a}(\rho)}\delta_{s_2=v\pm k(\rho)}\,\delta_{s_3=v\pm k(\rho)}*\phi_2^\dee*\phi_3^\dee.
$$
If both Dirac masses are based at the same point $s_2=s_3=v\pm k(\rho)$, then the factor $(s_3-s_2)$ leads the expression to vanish.
Then it suffices to consider the case that they are based at different points.
The action of this measure on a continuous function $\psi(s_1)$ is then
\beqas
a_1b_1e^{2\tilde{a}}k'(\rho)^{-1}\int(w-z)\phi_2^\dee(s_1-w)\phi_3^\dee(s_1-z)\psi(s_1)\,\dd s_1.
\eeqas
Hence, we bound
\beqas
&k'(\rho)^{-1}\Big|\int(w-z)\phi_2^\dee(s_1-w)\phi_3^\dee(s_1-z)\psi(s_1)\,\dd s_1\Big|\\
&\leq C(w-z)^{1+2\la}\int\phi_2^\dee(s_1-w)\phi_3^\dee(s_1-z)|\psi(s_1)|\,\dd s_1\\
&\leq C\dee^{2\la}\big(\frac{w-z}{\dee}\big)^{1+2\la}\int_{-1}^1\phi_2(s_1)\phi_3(s_1+\frac{w-z}{\dee})|\psi(w+\dee s_1)|\,\dd s_1\\
&\leq C\dee^{2\la}\to0,
\eeqas
where we have used $\supp\,\phi_3\subset(-1,1)$.
Arguing similarly for the other terms, we obtain
$$
E^{I,\dee}\to0 \qquad \text{ as $\dee\to0$}
$$
weak-star in measures in $s_1$ and uniformly in $(\rho,v)$.

We now come to the most significant term $E^{II,\dee}=E^{II}*\phi_2^\dee*\phi_3^\dee$.
Replacing $s_2$ and $s_3$ with $s_1$ gives a remainder equipped with good factors of form $s_j-s_1$,
which may be shown to converge to $0$ as above.
Taking account of cancellations, we therefore consider the expression:
\beqas
\tilde E^{II,\dee}=&\,(\la+1)a_1b_1\big(P_2G_{\la,2}\partial_{s_3}^\la G_{\la,3}-P_3G_{\la,3}\partial_{s_2}^\la G_{\la,2}\big)*\phi_2^\dee*\phi_3^\dee\\
&+(s_1-v)(a_1b_2-a_2b_1)\big(P_2 G_{\la,2}P_3G_{\la+1,3}-P_3G_{\la,3}P_2G_{\la+1,2}\big)*\phi_2^\dee*\phi_3^\dee.
\eeqas
Observe that
$$
\partial_s^{\la+1}G_{\la+1}(\rho,v-s)=[k^2-(v-s)^2]_+\partial_s^{\la+1}G_{\la}(\rho,v-s)-2(\la+1)(s-v)\partial_s^\la G_\la(\rho,v-s),
$$
and that the contributions from the first of these terms may be seen to converge to $0$.
Thus, it suffices to consider the contribution from
\beqas
&(\la+1)a_1b_1\big(P_2G_{\la,2}\partial_{s_3}^\la G_{\la,3}-P_3G_{\la,3}\partial_{s_2}^\la G_{\la,2}\big)*\phi_2^\dee*\phi_3^\dee\\
&\quad -2(\la+1)(s_1-v)^2(a_1b_2-a_2b_1)\big(P_2 G_{\la,2}\partial_{s_3}^\la G_{\la,3}-P_3G_{\la,3}\partial_{s_2}^\la G_{\la,2}\big)*\phi_2^\dee*\phi_3^\dee\\
&=(\la+1)\big(a_1b_1-2k^2(a_1b_2-a_2b_1)\big)\big(P_2G_{\la,2}\partial_{s_3}^\la G_{\la,3}-P_3G_{\la,3}\partial_{s_2}^\la G_{\la,2}\big)*\phi_2^\dee*\phi_3^\dee
  + G_{\rm error},
\eeqas
where $G_{\rm error}$ also converges to $0$.
Applying now the expansions for the explicit singularities calculated
in Proposition \ref{prop:relativisticexplicitsingularities} and Lemma \ref{lemma:measureheavisideproduct}, we conclude the result expected.
\end{proof}

With the above results, the proof of Theorem \ref{thm:relativisticreduction}
follows the same strategy as the proof of
\cite[Theorem 4.2]{ChenLeFloch}, included here for completeness.

\begin{proof}[Proof of Theorem {\rm \ref{thm:relativisticreduction}}]
We begin by applying the commutation relation \eqref{eq:relativisticcommutation} with the weak entropy pairs
generated by test functions $\psi_1$ and $\psi_2$ and use the density of test functions to derive the commutation relation
directly for the entropy kernels themselves:
\beqs
\overline{\chi(s_1)\sigma(s_2)-\chi(s_2)\sigma(s_1)}
=\overline{\chi(s_1)}\,\overline{\sigma(s_2)}-\overline{\chi(s_2)}\,\overline{\sigma(s_1)}
\eeqs
for any $s_1,s_2\in\R$, as in the proof of Lemma \ref{lemma:relativisticvertexsupport},
where, for example, $\overline{\chi(s_1)}=\int\chi(\rho,v,s_1)\,\dd \nu(\rho,v)$.
We choose $s_1,s_2,s_3\in\R$ and apply this identity to each of the pairs $(s_2,s_3)$, $(s_3,s_1)$, and $(s_1,s_2)$.
Multiplying these identities by $\overline{\chi(s_1)}$, $\overline{\chi(s_2)}$, and $\overline{\chi(s_3)}$, respectively,
and summing them together,
we see that the right-hand side vanishes (by an obvious symmetry), which leads to
\beqas
&\overline{\chi(s_1)}\,\,\overline{\chi(s_2)\sigma(s_3)-\chi(s_3)\sigma(s_2)}
+\overline{\chi(s_2)}\,\,\overline{\chi(s_3)\sigma(s_1)-\chi(s_1)\sigma(s_3)}\\[1mm]
& +\overline{\chi(s_3)}\,\,\overline{\chi(s_1)\sigma(s_2)-\chi(s_2)\sigma(s_1)}=0.
\eeqas
Applying now operators $P_2$ and $P_3$ defined above, we obtain
\beqas
&\overline{\chi(s_1)}\,\,\overline{P_2\chi(s_2)P_3\sigma(s_3)-P_3\chi(s_3)P_2\sigma(s_2)}
 +\overline{P_2\chi(s_2)}\,\,\overline{P_3\chi(s_3)\sigma(s_1)-\chi(s_1)P_3\sigma(s_3)}\\[1mm]
&+\overline{P_3\chi(s_3)}\,\,\overline{\chi(s_1)P_2\sigma(s_2)-P_2\chi(s_2)\sigma(s_1)}=0
\eeqas
distributionally in $(s_1,s_2,s_3)$.

We mollify this expression with mollifiers $\phi_2$ and $\phi_3$ as described above to obtain
\beqa\label{eq:capybara}
\overline{\chi_1}\,\,\overline{P_2\chi^\dee_2P_3\sigma^\dee_3-P_3\chi^\dee_3P_2\sigma^\dee_2}
 +\overline{P_2\chi^\dee_2}\,\,\overline{P_3\chi^\dee_3\sigma_1-\chi_1P_3\sigma^\dee_3}
+\overline{P_3\chi^\dee_3}\,\,\overline{\chi_1 P_2\sigma^\dee_2-P_2\chi^\dee_2\sigma_1}=0
\eeqa
with obvious notation.
Passing now $\dee\to0$, we recall that, as $P_j\chi_j$ is a bounded measure in $s_j$ with coefficients uniformly bounded in $(\rho,v)$,
we may pass
$$
P_j\chi_j^\dee\to P_1\chi_1
$$
weak-star with respect to measures in $s_1$ and uniformly with respect to $(\rho,v)$. Therefore, we have
$$
\overline{P_j\chi^\dee_j}\to\overline{P_1\chi_1}
$$
weak-star in measures in $s_1$.

Considering now the last two terms of \eqref{eq:capybara},
we may combine this convergence with the uniform convergence of Lemma \ref{lemma:cancellationI}(i) to deduce
$$
\overline{P_2\chi^\dee_2}\,\,\overline{P_3\chi^\dee_3\sigma_1-\chi_1P_3\sigma^\dee_3}+\overline{P_3\chi^\dee_3}\,\,\overline{\chi_1 P_2\sigma^\dee_2-P_2\chi^\dee_2\sigma_1}\to\overline{P_1\chi_1}\,\,\overline{X_1}-\overline{P_1\chi_1}\,\,\overline{X_1}=0,
$$
weak-star in measures in $s_1$.
On the other hand, we may apply Lemma \ref{lemma:cancellationI}(ii) to deduce that the first term of \eqref{eq:capybara}
converges weak-star in measures in $s_1$ to
$$
\overline{\chi(s_1)}\,\,\overline{Y(\phi_2,\phi_3)M(\rho)D(\rho)\sum_\pm (K^\pm)^2\delta_{s_1=v\pm k(\rho)}},
$$
so that,  for any test function $\psi(s_1)$,
$$
Y(\phi_2,\phi_3)\sum_\pm(K^\pm)^2\int\overline{\chi(v\pm k(\rho))}M(\rho)D(\rho)\psi(v\pm k(\rho))\,\dd \nu(\rho,v)=0.
$$
By assumption, we take $Y(\phi_2,\phi_3)\neq 0$ so that
$$
\sum_\pm\int\overline{\chi(v\pm k(\rho))}M(\rho)D(\rho)\,\dd \nu(\rho,v)=0.
$$
As $M(\rho)D(\rho)>0$ for $\rho>0$ by Proposition \ref{prop:Dcoefficient} (recall that $\eps<\eps_0$),
we deduce
$$
\supp\,\nu\cap\{(\rho,u)\,:\,z_{\min}< z(\rho,u)<w(\rho,u)<w_{\max}\}=\emptyset,
$$
since, for all $s\in(z_{\min},w_{\max})$, $\chi(s)$ (considered in the $(w,z)$--coordinates) contains
point $(w_{\max},z_{\min})$ in the interior of its support and, by Lemma \ref{lemma:relativisticvertexsupport},
point $(w_{\max},z_{\min})\in\supp\,\nu$ so that
 $\overline{\chi(s)}>0$ for all $s\in(z_{\min},w_{\max})$.
Thus, the support of $\nu$ must be contained in the vacuum  line $V$ and point $(w_{\max},z_{\min})$.
Writing
$$
\nu=\nu_V+\om\de_{(w_{\max},z_{\min})},
$$
where $\nu_V$ is supported in the vacuum state $V$ and $\om\in[0,1]$,
we deduce from the commutation relation that, for all $s_1,s_2\in\R$,
$$
(\om-\om^2)\big(\chi(w_{\max},z_{\min},s_1)\sigma(w_{\max},z_{\min},s_2)-\chi(w_{\max},z_{\min},s_2)\sigma(w_{\max},z_{\min},s_1)\big)=0.
$$
Choosing $s_1$ and $s_2$ such that the second factor is non-zero, we deduce that $\om=0$ or $\om=1$.
\end{proof}

Then the proof of Theorem \ref{thm:relativisticcompactness} directly follows from Theorem \ref{thm:relativisticreduction}.

\section{Global Viscosity Solutions}\label{sec:relativisticartificial}
In this section, we demonstrate a method for the construction of a sequence of approximate solutions
satisfying the compactness framework above.
We address this problem via the introduction of \it artificial viscosity \rm by considering the system:
\beq\label{eq:relativisticartificialviscosity}
\begin{cases}
\partial_t\big(\frac{n}{\sqrt{1-u^2/c^2}}\big) + \partial_x\big(\frac{nu}{\sqrt{1-u^2/c^2}}\big) = \de\partial_{xx}\big(\frac{n}{\sqrt{1-u^2/c^2}}\big), \\[3mm]
\partial_t\big(\frac{(\rho + p/c^2)u}{1-u^2/c^2}\big) + \partial_x\big(\frac{(\rho + p/c^2)u^2}{1-u^2/c^2} + p\big) = \de\partial_{xx}\big(\frac{(\rho + p/c^2)u}{1-u^2/c^2}\big),
\end{cases}
\eeq
where $\de>0$ is the viscosity parameter.
As with the classical Euler equations, the viscosity system admits an \it invariant region\rm,
which is one of the conditions we require to apply our compactness framework.

Before we state the theorem for the existence of the solutions to this system,
a few remarks on the end-point states are in order.
To allow for the possibility that the density and velocity do not vanish at infinity,
we impose the end-point states $(\rho_\pm,u_\pm)$ such that $\rho_\pm\geq 0$ and $|u_\pm|< c$ for the approximate solutions.
We introduce smooth, monotone functions $(\bar\rho(x),\bar u(x))$ such that
$(\bar\rho(x),\bar u(x))=(\rho_\pm,u_\pm)$ for $\pm x\ge 1$ and require
that the approximate initial data functions satisfy
$(\rho^\de_0-\bar\rho,u^\de_0-\bar u)\in C_{\rm c}^\infty(\R)$.
The existence and uniform bounds of solutions for this system are given in the following theorem.

\begin{thm}\label{thm:relativisticapproxsolutions}
Let $(\rho_0^\de, u_0^\de)$ be approximate initial data functions such that
\beqs
(\rho_0^\de-\bar\rho,u_0^\de-\bar u)\in C_{\rm c}^\infty(\R)
\eeqs
and, for some $M_0>0$ independent of $\de>0$,
\beqs
|u_0^\de|\leq M_0<\frac{1}{\sqrt{\eps}}, \quad\,\, 0<\rho_0^\de\leq \rho_{M_0}<\rho_{\max}^\eps.
\eeqs
Then there exist global solutions $U^\de=U(\rho^\de,u^\de)$ of system \eqref{eq:relativisticartificialviscosity} such that
$$
(\rho^\de(t,\cdot)-\bar\rho,u^\de(t,\cdot)-\bar u)\in C^1\cap H^1
$$
and
$$
|u^\de(t,x)|\leq M<\frac{1}{\sqrt{\eps}}, \qquad 0<\rho^\de(t,x)\leq \rho_M<\rho_{\max}^\eps \,\qquad \text{ for all }(t,x)\in\R^2_+,$$
where $M$ and $\rho_M$ are independent of $\de>0$.
\end{thm}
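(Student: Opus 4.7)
The plan is to proceed in three standard stages: local existence via a fixed-point argument for the parabolic system, uniform $L^\infty$ bounds via an invariant region in the Riemann invariant coordinates, and finally global continuation.

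For the local existence, I would work with the shifted variable $V^\de := U(\rho^\de, u^\de) - U(\bar\rho, \bar u)$, which has compactly supported smooth initial data. The system \eqref{eq:relativisticartificialviscosity} recasts as
$$
\partial_t V^\de - \de \partial_{xx} V^\de = -\partial_x\bigl(F(U(\bar\rho,\bar u)+V^\de)-F(U(\bar\rho,\bar u))\bigr) + \de\,\partial_{xx} U(\bar\rho,\bar u),
$$
with a smooth, compactly-supported source. Writing this in Duhamel form against the heat semigroup $e^{\de t \partial_{xx}}$ and applying Banach's fixed-point theorem on the Banach space $C([0,T^*]; C^1\cap H^1)$ for a sufficiently small time $T^*$ yields a unique local classical solution in the required regularity class.

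The heart of the argument is the invariant-region estimate, which I would carry out in the Riemann coordinates $w = v(u)+k(\rho)$, $z = v(u)-k(\rho)$. A direct calculation converts \eqref{eq:relativisticartificialviscosity} into a pair of parabolic equations of the form
$$
\pt w + \la_+ \partial_x w - \de\, \partial_{xx} w = \de\, \Qu_w(w_x,z_x;\rho,u), \qquad \pt z + \la_- \partial_x z - \de\, \partial_{xx} z = \de\, \Qu_z(w_x,z_x;\rho,u),
$$
where $\Qu_w,\Qu_z$ are quadratic forms in $(w_x,z_x)$ with coefficients smooth and bounded on the closure of $\{|u|\le M,\, 0\le\rho\le\rho_M\}$. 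One checks by direct computation that, at an interior spatial maximum of $w$ where $w_x=0$, the quadratic form $\Qu_w$ reduces to a nonpositive multiple of $z_x^2$, so the maximum principle rules out increase of $\sup_x w$; the symmetric statement holds for $z$. Following the strategy of Chueh--Conley--Smoller, the rectangle $\{z_0^{\min}\le z \le w \le w_0^{\max}\}$ is invariant for the flow, which translates to the claimed bounds $|u^\de|\le M<\tfrac{1}{\sqrt{\eps}}$ and $0<\rho^\de\le\rho_M<\rho_{\max}^\eps$, with $M$ and $\rho_M$ determined by the initial bounds and independent of $\de$. Positivity of $\rho^\de$ is preserved in finite time by a standard strong maximum-principle argument applied to the first equation of \eqref{eq:relativisticartificialviscosity}.

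For the global continuation, the uniform $L^\infty$ bound allows standard parabolic energy estimates on $\partial_x V^\de$ to control $\|V^\de(t,\cdot)\|_{C^1\cap H^1}$ in terms of the initial data and $t$, ruling out finite-time blow-up, so the local solution extends to all $t\ge 0$. \textbf{The main obstacle} is the invariant-region step: because the viscosity acts on the conservation variables $U$ rather than on $(w,z)$, the quadratic forms $\Qu_w,\Qu_z$ carry complicated $\eps$-dependent coefficients coming from the nonlinear change of variables via $v(u)=\tfrac{1}{2\sqrt{\eps}}\log\!\bigl(\tfrac{1+\sqrt{\eps}u}{1-\sqrt{\eps}u}\bigr)$ and $k(\rho)$. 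One must verify the correct sign of $\Qu_w$ (and $\Qu_z$) at extrema \emph{uniformly} in $\eps\in(0,\eps_0]$ and up to the near-vacuum and near-lightspeed boundaries. The blow-up of $v(u)$ as $|u|\to\tfrac{1}{\sqrt{\eps}}$ actually works in our favour: any finite $L^\infty$ bound on $(w,z)$ automatically forces $|u|$ strictly below the lightspeed, yielding $M<\tfrac{1}{\sqrt{\eps}}$ independently of $\de$.
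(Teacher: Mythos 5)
Your proposal follows essentially the same route the paper takes: the authors simply state that "the proof is by now completely standard" and reduce it to the parabolic maximum-principle/invariant-region argument for the Riemann invariants (their Lemma~\ref{lemma:relativisticapproxinvariantregion}), plus routine local existence and continuation — exactly the three stages you describe. One small algebraic slip: after shifting by $\bar U=U(\bar\rho,\bar u)$, the correct Duhamel form is
\begin{equation*}
\partial_t V^\de-\de\partial_{xx}V^\de=-\partial_x\bigl(F(\bar U+V^\de)-F(\bar U)\bigr)-\partial_xF(\bar U)+\de\,\partial_{xx}\bar U,
\end{equation*}
so the compactly supported smooth source is $-\partial_xF(\bar U)+\de\bar U_{xx}$; you dropped the $-\partial_xF(\bar U)$ term, which is harmless since it too is smooth with compact support. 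You also call the invariant set a rectangle, but because $w-z=2k(\rho)\ge 0$ it is really the triangle $\{z_0^{\min}\le z\le w\le w_0^{\max}\}$ in the $(w,z)$-plane, as the paper has it. The substantive point you flag — the sign of the quadratic remainder $\Qu_w,\Qu_z$ at extrema in the relativistic variables — is exactly the content the paper defers to the "standard argument," so your identification of that as the heart of the matter is accurate.
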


The proof is by now standard.
The uniform bounds on $\rho$ and $u$ follow from the following lemma,
whose proof is a standard argument based on the parabolic maximum principle for the Riemann invariants.
Throughout this section, we drop the explicit dependence of the functions on $\de>0$, which is assumed to be fixed.

\begin{lemma}\label{lemma:relativisticapproxinvariantregion}
Any $C^{1,1}$ solution $(\rho(t,x),u(t,x))$ to system \eqref{eq:relativisticartificialviscosity} admits the following bounds{\rm :}
\beqs
\|(k(\rho),v(u))\|_{L^\infty(\R^2_+)}\leq C\|(w_0,z_0)\|_{L^\infty(\R)},
\eeqs
where $w_0(x)=w(\rho_0(x),u_0(x))$, $z_0(x)=z(\rho_0(x),u_0(x))$, and $C>0$ is independent of $\delta$ and $\eps$,
which implies that there exist $M$ and $\rho_M$ depending on $\|(w_0,z_0)\|_{L^\infty(\R)}$,
but independent of $\eps$, such that
\begin{align*}
|u(t,x)|\leq M<\frac{1}{\sqrt{\eps}}, \quad\, 0\le \rho\le \rho_M  <\rho_{\max}^\eps.
\end{align*}
\end{lemma}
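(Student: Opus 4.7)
The plan is the standard invariant-region argument via the parabolic maximum principle applied to the Riemann invariants $w = v(u) + k(\rho)$ and $z = v(u) - k(\rho)$. First I would write \eqref{eq:relativisticartificialviscosity} as $\partial_t U + \partial_x F(U) = \de\,\partial_{xx} U$ in the conserved variables $U = U(\rho,u)$ of \eqref{eq:conservedvariables} and contract with $\nabla_U w$. The defining property $\nabla_U w \cdot \nabla_U F(U) = \lambda_+ \nabla_U w$ of the Riemann invariant, combined with the chain-rule identity $\nabla_U w \cdot \partial_{xx} U = \partial_{xx} w - (\partial_x U)^\top \nabla_U^2 w\, \partial_x U$, produces the parabolic equation
\begin{equation*}
\partial_t w + \lambda_+ \partial_x w - \de\, \partial_{xx} w = -\de\, (\partial_x U)^\top \nabla_U^2 w\, \partial_x U,
\end{equation*}
with the analogous equation for $z$ carrying $\lambda_-$ in place of $\lambda_+$.

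The heart of the argument is to show that, at an interior spatial maximum of $w(t,\cdot)$, the right-hand side is nonpositive. At such a point $\partial_x w = \nabla_U w \cdot \partial_x U = 0$, so $\partial_x U$ lies in the one-dimensional kernel of $\nabla_U w$, which is spanned by $r_1$; the right-hand side therefore reduces to $-\de c^2\, r_1^\top \nabla_U^2 w\, r_1$ for some scalar $c$. It then suffices to verify the convexity-type inequality $r_1^\top \nabla_U^2 w\, r_1 \geq 0$ throughout the physical region $\{|u|<1/\sqrt{\eps},\, 0<\rho<\rho_{\max}^\eps\}$. Using the explicit form of $r_1$ from Section \ref{sec:relativisticproperties} together with $w = v(u) + k(\rho)$, this reduces to an inequality which is the relativistic analogue of the classical invariant-region computation and follows from genuine nonlinearity \eqref{ass:genuine-nonlinearity}; the analogous inequality $r_2^\top \nabla_U^2 z\, r_2 \geq 0$ holds for $z$.

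With these sign conditions, since $(\rho_0-\bar\rho, u_0-\bar u) \in C_c^\infty(\R)$ so that $(w_0,z_0)$ equal constants off a compact set, the parabolic maximum principle on $\R$ (with the constant end-state values serving as barriers at infinity) yields $\|w\|_{L^\infty(\R^2_+)} \leq \|w_0\|_{L^\infty(\R)}$ and $\|z\|_{L^\infty(\R^2_+)} \leq \|z_0\|_{L^\infty(\R)}$. Since $k(\rho) = \tfrac12(w-z)$ and $v(u) = \tfrac12(w+z)$, this immediately gives the claimed $L^\infty$ bound on $(k(\rho), v(u))$; the bijectivity of $v:(-1/\sqrt{\eps}, 1/\sqrt{\eps})\to\R$ via \eqref{eq:u(v)} and of $k$ on $(0, \rho_{\max}^\eps)$ then translates this into $|u|\leq M < 1/\sqrt{\eps}$ and $\rho \leq \rho_M < \rho_{\max}^\eps$, with $M$ and $\rho_M$ determined by $\|(w_0,z_0)\|_{L^\infty}$ alone and hence $\eps$-independent, provided $\eps$ is small enough that $1/\sqrt{\eps}$ exceeds these bounds.

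The principal obstacle is the verification of the semidefiniteness $r_1^\top \nabla_U^2 w\, r_1 \geq 0$ (and its counterpart for $z$). Compared with the classical Euler case, the relativistic computation introduces several $\eps$-dependent correction terms which must be bookkept and controlled using the a priori sound-speed bound $\sqrt{p'(\rho)} < 1/\sqrt{\eps}$; nonetheless, these corrections are dominated by the genuine-nonlinearity contribution, so the required sign is preserved.
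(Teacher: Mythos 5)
Your overall strategy — writing the viscous system in conserved variables, contracting with $\nabla_U w$ and $\nabla_U z$ to obtain scalar parabolic equations for the Riemann invariants, and then applying a parabolic maximum/invariant-region argument — is exactly the ``standard argument'' the paper invokes without writing out. The derivation of the equation
$\partial_t w + \lambda_+ \partial_x w - \de\,\partial_{xx} w = -\de\,(\partial_x U)^\top \nabla_U^2 w\,\partial_x U$
is correct, as is the observation that $\partial_x U \parallel r_1$ at a spatial extremum of $w$. However, two details need to be repaired.

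First, the sign condition you state for $z$ is reversed. The invariant region is the triangle $\{z_{\min}\le z\le w\le w_{\max}\}$. To keep $w$ from increasing past $w_{\max}$ you correctly need $r_1^\top\nabla_U^2 w\,r_1\ge 0$ (so the right-hand side is nonpositive at an interior maximum of $w$). But to keep $z$ from decreasing past $z_{\min}$ you need the right-hand side to be \emph{nonnegative} at an interior minimum of $z$, where $\partial_x U\parallel r_2$; this requires $r_2^\top\nabla_U^2 z\,r_2\le 0$, not $\ge 0$. A quick check in the classical limit with $w=u+\rho^\theta$, $z=u-\rho^\theta$, $U=(\rho,\rho u)$ gives $r_1^\top\nabla_U^2 w\,r_1=\theta(\theta+1)\rho^{\theta-2}>0$ and $r_2^\top\nabla_U^2 z\,r_2=-\theta(\theta+1)\rho^{\theta-2}<0$, confirming the asymmetry.

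Second, the one-sided bounds $w\le\sup w_0$ and $z\ge\inf z_0$ do not by themselves give $\|w\|_{L^\infty}\le\|w_0\|_{L^\infty}$ and $\|z\|_{L^\infty}\le\|z_0\|_{L^\infty}$ as you claim; you must also use the third side of the invariant triangle, $w-z=2k(\rho)\ge 0$ (equivalently $\rho\ge 0$), which is preserved by the viscous system. Combining $w\ge z$ with the two one-sided bounds yields $z_{\min,0}\le z\le w\le w_{\max,0}$, hence both $|w|$ and $|z|$ are controlled by $\|(w_0,z_0)\|_{L^\infty(\R)}$, from which the stated bound on $(k(\rho),v(u))$ and the $\eps$-independent constants $M,\rho_M$ follow as you describe. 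With these two corrections, your proposal matches the intended proof.
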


To apply the compactness framework in \S \ref{sec:relativisticcompactness},
we require the $H^{-1}_{\loc}$ compactness of the entropy dissipation measures
$\partial_t\eta(U^\de)+\partial_x q(U^\de)$.
This compactness requires the uniform estimates  of $(\rh^\de_x, u^\de_x)$ in $\de>0$.
We obtain this via the relative entropy method.

We recall the physical entropy pair $(\eta^*,q^*)$ from \eqref{def:physicalentropy}.
Writing $U=U(\rho,u)$ and $\bar U=U(\bar\rho,\bar u)$,
we define a modified entropy pair (designed via the relative entropy method) by
\beq\label{def:relativisticmodifiedentropy}
\overline{\eta^*}(U)=\eta^*(U)-\eta^*(\bar U)-\nabla\eta^*(\bar U)\cdot(U-\bar U)\geq 0.
\eeq
As $\eta^*$ is convex, we see that $\overline{\eta^*}(U)\geq0$ and $\nabla^2\overline{\eta^*}(U)=\nabla^2\eta^*(U)$.

\begin{lemma}\label{lemma:relativisticderivativebounds}
Suppose that
$$
\int_\R\overline{\eta^*}(U(\rho_0,u_0))(x)\,\dd x<\infty.
$$
Then there exists $C>0$, independent of $\de$, such that any solution $U(\rho,u)$ of \eqref{eq:relativisticartificialviscosity} satisfies
\beqs
\sup_{t\in[0,T]}\int_\R\overline{\eta^*}(U(\rho,u))(t,x)\,\dd x+\de\int_0^T\int_\R(U_x)^\top\nabla^2\eta^{*}(U) U_x\,\dd x\,\dd t\leq C.
\eeqs
In particular,
\beq\label{ineq:energy}
\de\int_0^T\int_\R\big(\rho^{\ga-2}|\rho_x|^2+\rho|u_x|^2\big)\,\dd x\,\dd t\leq C.
\eeq
\end{lemma}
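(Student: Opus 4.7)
The plan is to carry out a relativistic version of the relative entropy method for viscous conservation laws. The idea is that $\overline{\eta^*}(U|\bar U)$ defined in \eqref{def:relativisticmodifiedentropy} is a legitimate convex Lyapunov functional whose dissipation rate under \eqref{eq:relativisticartificialviscosity} is essentially $\delta\, U_x^\top \nabla^2\eta^*(U) U_x$, and this is exactly the quantity we want to control in order to apply the compactness framework of \S\ref{sec:relativisticcompactness}.

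First I would multiply the viscous system $\partial_t U + \partial_x F(U) = \delta U_{xx}$ by the difference $\nabla\eta^*(U)-\nabla\eta^*(\bar U)$ and manipulate term by term. The time derivative yields $\partial_t\overline{\eta^*}(U|\bar U)$. Using the entropy identity $\nabla\eta^*\,\nabla F=\nabla q^*$ and adding/subtracting fluxes evaluated at $\bar U$, the convective contribution can be reorganised as $\partial_x\overline{q^*}(U|\bar U) + \nabla^2\eta^*(\bar U)\bar U_x\cdot(F(U)-F(\bar U))$, where $\overline{q^*}(U|\bar U)$ is the relative flux. Integration by parts in the viscous term produces $-\delta\, U_x^\top\nabla^2\eta^*(U) U_x + \delta\,\nabla^2\eta^*(\bar U)\bar U_x\cdot U_x$ plus a pure boundary flux that vanishes on $\R$ after integration (using the $H^1$ structure in Theorem \ref{thm:relativisticapproxsolutions}). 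Integrating in $x$ over $\R$ then gives the identity
\begin{equation*}
\ddt\int_\R\overline{\eta^*}(U|\bar U)\,\dd x + \delta\int_\R U_x^\top\nabla^2\eta^*(U)U_x\,\dd x
= -\int_\R \nabla^2\eta^*(\bar U)\bar U_x\cdot\big(F(U)-F(\bar U)\big)\dd x + \delta\int_\R \nabla^2\eta^*(\bar U)\bar U_x\cdot U_x\,\dd x.
\end{equation*}

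Next I would bound the two error terms on the right uniformly in $\delta$. Because $\bar U_x$ is smooth and compactly supported in $[-1,1]$ by construction, and $U$ is confined to the invariant region supplied by Lemma \ref{lemma:relativisticapproxinvariantregion}, the first error is bounded in $t$ uniformly by a constant depending only on the initial data and the invariant region. For the second error, the naive Cauchy--Schwarz fails since we have no a priori $L^2$ control on $U_x$; instead I would integrate by parts once more to rewrite it as $-\delta\int_\R \partial_x(\nabla^2\eta^*(\bar U)\bar U_x)\cdot U\,\dd x$, which is bounded by $C\delta$ because $\partial_x(\nabla^2\eta^*(\bar U)\bar U_x)\in C_c^\infty$ and $U\in L^\infty$. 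Integrating in time on $[0,T]$ and using the assumed finiteness of $\int\overline{\eta^*}(U(\rho_0,u_0))\,\dd x$ then delivers the first asserted inequality.

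Finally, for \eqref{ineq:energy} I would switch to the physical $(\rho,u)$--coordinates. Writing $U_x=\nabla_{(\rho,u)}U\,(\rho_x,u_x)^\top$, the dissipation rate becomes $(\rho_x,u_x)\,H(\rho,u)\,(\rho_x,u_x)^\top$, where $H:=(\nabla_{(\rho,u)}U)^\top\nabla^2\eta^*(U)\nabla_{(\rho,u)}U$ is the Hessian of $\eta^*$ viewed as a function of $(\rho,u)$. Using the explicit formula for $\nabla^2\eta^*$ in the paper and the change of variables, a direct computation (mirroring the classical case, where $H=\mathrm{diag}(p'(\rho)/\rho,\rho)$) shows that, uniformly on the invariant region,
\begin{equation*}
(\rho_x,u_x)\,H(\rho,u)\,(\rho_x,u_x)^\top \geq c\big(\tfrac{p'(\rho)}{\rho}\rho_x^2 + \rho\,u_x^2\big)
\end{equation*}
for some $c>0$ independent of $\eps\in(0,\eps_0]$; combining this with the asymptotics $p'(\rho)\sim\gamma\kappa\rho^{\gamma-1}$ from \eqref{ass:pressure} yields $\rho^{\gamma-2}|\rho_x|^2+\rho|u_x|^2$ as required. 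The main obstacle throughout is the presence of non-trivial end states $(\rho_\pm,u_\pm)$, which forces the use of the relative entropy (rather than $\eta^*$ itself) and, in turn, forces the extra integration by parts to control the $\delta$-weighted cross term $\delta\int\nabla^2\eta^*(\bar U)\bar U_x\cdot U_x\,\dd x$ without appealing to $\|U_x\|_{L^2}$.
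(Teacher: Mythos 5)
Your proposal follows exactly the relative-entropy argument the paper intends, and the first display inequality is established correctly: you multiply by $\nabla\eta^*(U)-\nabla\eta^*(\bar U)$, decompose the convective term into the relative flux $\overline{q^*}$ plus the compactly-supported error $\nabla^2\eta^*(\bar U)\bar U_x\cdot(F(U)-F(\bar U))$, and — the right idea — integrate by parts a second time to convert $\de\int\nabla^2\eta^*(\bar U)\bar U_x\cdot U_x\,\dd x$ into $-\de\int\partial_x(\nabla^2\eta^*(\bar U)\bar U_x)\cdot U\,\dd x$, so that only $L^\infty$ control of $U$ (and not $L^2$ of $U_x$) is required. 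The identification of $H=(\nabla_{(\rho,u)}U)^\top\nabla^2\eta^*(U)\nabla_{(\rho,u)}U$ as the quadratic form in $(\rho_x,u_x)$ and the observation that it reduces to $\mathrm{diag}(p'/\rho,\rho)$ in the classical setting are also correct. This is essentially the paper's (omitted) proof.

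There is, however, a concrete error in your final paragraph: the claimed coercivity
\begin{equation*}
(\rho_x,u_x)\,H(\rho,u)\,(\rho_x,u_x)^\top \geq c\Big(\tfrac{p'(\rho)}{\rho}\rho_x^2 + \rho\,u_x^2\Big)
\end{equation*}
with $c>0$ \emph{independent of} $\eps$ is false. Every entry of the Hessian $\nabla^2\eta^*$ displayed in the paper carries an explicit factor $\eps$; for instance, at $u=0$ a direct computation gives
\begin{equation*}
U_x^\top\nabla^2\eta^*(U)U_x=\eps\Big(\tfrac{p'(\rho)}{\rho+\eps p(\rho)}\,\rho_x^2+(\rho+\eps p(\rho))\,u_x^2\Big),
\end{equation*}
so the best constant $c$ is of order $\eps$. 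This reflects the degeneracy $\eta^*\to\rho$ as $\eps\to0$, where the limit entropy is linear in the conserved variables and its Hessian vanishes. The error is harmless for the lemma as stated — the constant $C$ is only required to be independent of $\de$, and $\eps$ is fixed here — but the parenthetical claim of $\eps$-uniformity should be removed; it is also not needed anywhere in the paper, as the Newtonian-limit section uses a different compactness mechanism. A minor further point: $H$ is the congruence of $\nabla^2_U\eta^*$ by $\nabla_{(\rho,u)}U$, not the Hessian of $\eta^*$ in the $(\rho,u)$-coordinates; the two differ by $\nabla_U\eta^*\cdot\nabla^2_{(\rho,u)}U$. Only the congruence controls the dissipation rate, so the computation is right even though the characterization is off.
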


The proof is by now standard.
When $\ga\leq 2$, it suffices to deduce the desired compactness of the entropy dissipation measures.
However, when $\ga>2$, this bound is insufficient. The following lemma provides the necessary improved control.

\begin{lemma}\label{lemma:relativisticga>2rhoxbound}
Let $\De\in(0,\half)$, and let $K\subset\R$ be compact.
Then any solution $(\rho,u)$ of \eqref{eq:relativisticartificialviscosity} satisfies
$$
\de\int_0^T\int_{K\cap\{\rho<\De\}}|\rho_x|^2\,\dd x\,\dd t\leq C\De+C\frac{\De^2}{\de}+C\De^{\frac{4-\ga}{2}}
$$
for some $C>0$ independent of $\de>0$.
\end{lemma}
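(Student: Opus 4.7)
The plan is to exploit the viscous continuity equation for the conserved density $N := n(\rho)/\sqrt{1-\eps u^2}$, namely $\partial_t N + \partial_x(Nu) = \de\partial_{xx}N$, by multiplying it by a tailored cutoff of $N$ and absorbing the resulting terms either as boundary data, via a $\de$-weighted Young inequality, or against the energy estimate \eqref{ineq:energy}. Since $N$ and $\rho$ are comparable throughout the invariant region (from $n(\rho)/\rho\to 1$ at the vacuum and $1-\eps u^2$ bounded below), it will suffice to establish the analogous bound for $N_x$ on $\{N<C\De\}$; the passage to $\rho_x$ then produces only an additional $O(\De)$ error via the chain-rule identity $N_x = n'(\rho)(1-\eps u^2)^{-1/2}\rho_x + n(\rho)\eps u(1-\eps u^2)^{-3/2}u_x$ and the $\rho$-weighted bound $\de\int\int\rho|u_x|^2\leq C$ from \eqref{ineq:energy}.

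Select a $C^{1,1}$ cutoff $\phi(s) = \tfrac12 s^2$ for $0\leq s\leq \De$, extended linearly as $\phi(s) = \De s - \tfrac12\De^2$ for $s\geq \De$, so that $\phi'(s) = \min(s,\De)\leq \De$, $\phi''(s) = \mathbb{1}_{\{s<\De\}}$, and $\phi(s)\leq \De s$; let $\chi\in C_c^\infty(\R)$ with $\chi\equiv 1$ on $K$. Multiplying the $N$-equation by $\chi\phi'(N)$ and using $\phi'(N)(Nu)_x = \partial_x(\phi'(N)Nu)-\phi''(N)NuN_x$ together with $\phi'(N)N_{xx} = \partial_x(\phi'(N)N_x)-\phi''(N)|N_x|^2$, then integrating by parts in $t$ and $x$, produces the identity
\begin{align*}
\de\int_0^T\!\!\int_\R \chi\mathbb{1}_{\{N<\De\}}|N_x|^2\,\dd x\,\dd t
&= \int_\R\chi\big(\phi(N_0)-\phi(N(T,\cdot))\big)\,\dd x + \int_0^T\!\!\int_\R \chi_x\phi'(N)Nu\,\dd x\,\dd t \\
&\quad + \int_0^T\!\!\int_\R \chi\mathbb{1}_{\{N<\De\}}NuN_x\,\dd x\,\dd t - \de\int_0^T\!\!\int_\R \chi_x\phi'(N)N_x\,\dd x\,\dd t.
\end{align*}

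The first three terms on the right are directly $O(\De)$ thanks to $\phi(N)\leq C\De$ and $\phi'(N)Nu\leq C\De$ pointwise together with the compact support of $\chi,\chi_x$. For the fourth (transport) term, a $\de$-weighted Young inequality $ab\leq\tfrac{\de}{2}a^2+\tfrac{1}{2\de}b^2$ absorbs half of the desired integral into the left-hand side and leaves a residue bounded by $\frac{1}{\de}\int\int\mathbb{1}_{\{N<\De\}}|Nu|^2\leq C\De^2/\de$. For the fifth (viscous boundary) term, Cauchy–Schwarz against the energy bound $\de\int\int N^{\ga-2}|N_x|^2\leq C$ (inherited from \eqref{ineq:energy} via $N\sim\rho$ and the chain rule above) gives $\de|\int\int\chi_x\phi'(N)N_x|\leq C\de^{1/2}\big(\int\int|\chi_x|^2\phi'(N)^2 N^{2-\ga}\big)^{1/2}$, and the crucial pointwise inequality $\phi'(s)^2 s^{2-\ga}\leq C\De^{4-\ga}$ on $[0,\rho_M]$ (verified by distinguishing $s<\De$ vs.\ $s\geq\De$ and using $\ga\in(1,3)$, hence $4-\ga>1$) then yields the contribution $C\de^{1/2}\De^{(4-\ga)/2}\leq C\De^{(4-\ga)/2}$ for $\de\leq 1$.

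Absorbing the half-copy of the target integral into the left-hand side yields the claimed inequality for $N_x$, and the conversion from $N_x$ to $\rho_x$ described above costs only an additional $O(\De)$ (using $n(\rho)^2\leq C\De\cdot\rho$ on $\{\rho<\De\}$ together with the second bound in \eqref{ineq:energy}). The main technical point of the argument is thus the scale-matching pointwise bound $\phi'(s)^2 s^{2-\ga}\leq C\De^{4-\ga}$, which is the dimensional identity driving the emergence of the $\De^{(4-\ga)/2}$ contribution; all relativistic corrections entering the computation are controlled uniformly as bounded multiplicative factors, owing to the invariant-region bound $|u|\leq M<1/\sqrt{\eps}$ keeping us away from the light-speed singularity.
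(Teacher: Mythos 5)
Your proof follows essentially the same approach as the paper: multiplying the viscous continuity equation for $N=n(\rho)/\sqrt{1-\eps u^2}$ by a truncated quadratic $\phi'(N)$ times a spatial cutoff, integrating by parts, controlling the convective term by a $\de$-weighted Young inequality, handling the viscous boundary term by Cauchy--Schwarz against the energy estimate, and converting from $N_x$ to $\rho_x$ via the chain rule and the $\rho|u_x|^2$ energy bound. The only substantive caveat is your justification of the pointwise bound $\phi'(s)^2 s^{2-\ga}\leq C\De^{4-\ga}$ ``using $\ga\in(1,3)$'': on $\{s\geq\De\}$ one has $\phi'(s)^2 s^{2-\ga}=\De^2 s^{2-\ga}$, and the inequality $s^{2-\ga}\leq\De^{2-\ga}$ there requires $\ga\geq 2$, not just $\ga\in(1,3)$; this is harmless in context (the lemma is only invoked for $\ga>2$, and for $\ga\le 2$ the conclusion already follows directly from \eqref{ineq:energy}), but the justification as stated is imprecise.
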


\begin{proof}
We denote $N=\frac{n(\rho)}{\sqrt{1-\eps u^2}}$ so that the first equation in \eqref{eq:relativisticartificialviscosity} becomes
$$
N_t+(Nu)_x=\de N_{xx}.
$$
For $\De\in(0,\half)$ to be determined later, we set
\beqs
\phi(N):=\begin{cases}
\half N^2 &\mbox{for $N<\De$},\\[1mm]
\half \De^2+\De(N-\De) &\mbox{for $N\geq\De$}.
\end{cases}
\eeqs
Observe now that $\phi'(N)=N\mathds{1}_{N<\De}+\De\mathds{1}_{N\geq\De}=\min\{N,\De\}$ and $\phi''(N)=\mathds{1}_{N<\De}$.
Multiplying the first equation in \eqref{eq:relativisticartificialviscosity} by $\phi'(N)\om^2(x)$,
where $\om\in C_{\rm c}^\infty(\R)$ is a spatial test function such that $\om=1$ on $K$ and $\om\geq 0$,
$$
(\phi(N)\om^2)_t+\phi'(N)(Nu)_x\om^2 =\de\phi'(N)N_{xx}\om^2.
$$
Then integrating by parts yields
\beqas
&\int_{\supp\,\om}\phi(N)\om^2\,\dd x\big|_0^T-\int_0^T\int_{\supp\,\om}\phi''(N)N_x Nu\om^2\,\dd x\,\dd t-\int_0^T\int_{\supp\,\om}2\phi'(N)Nu\om_x\om\,\dd x\,\dd t\\
&=-\de\int_0^T\int_{\supp\,\om}\phi''(N)N_x^2\om^2\,\dd x\,\dd t-2\de\int_0^T\int_{\supp\,\om}\phi'(N)N_x\om_x\om\,\dd x\,\dd t.
\eeqas
Rearranging this equation and recalling the expressions for $\phi'(N)$ and $\phi''(N)$ above, we see
\beqas
&\de\int_0^T\int_{{\supp\,\om}\cap\{N<\De\}}N_x^2\om^2\,\dd x\,\dd t\\
&=\int_{\supp\,\om}\phi(N)\om^2\,\dd x\big|_0^T+\int_0^T\int_{{\supp\,\om}\cap\{N\leq\De\}}N_xNu\om^2\,\dd x\,\dd t\\
&\quad +2\int_0^T\int_{\supp\,\om}\min\{N,\De\}Nu\om_x\om\,\dd x\,\dd t-2\de\int_0^T\int_{{\supp\,\om}\cap\{N<\De\}}NN_x\om_x\om\,\dd x\,\dd t\\
&\quad -2\de\int_0^T\int_{\supp\,\om}\De N_x\om_x\om\,\dd x\,\dd t.
\eeqas
Recall now that there is a uniform bound on $(N, u)$ from Lemma \ref{lemma:relativisticapproxinvariantregion},
as well as on $(\om,\om_x)$. We therefore note that $\phi(N)\leq\De$ and apply the H\"older inequality to obtain
\beqas
\de\int_0^T&\int_{{\supp\,\om}\cap\{N<\De\}}N_x^2\om^2\,\dd x\,\dd t
\leq C\De +C\De\Big(\int_0^T\int_{{\supp\,\om}\cap\{N<\De\}}N_x^2\om^2\,\dd x\,\dd t\Big)^{\half}+C\sqrt{\de}\De^{\frac{4-\ga}{2}},
\eeqas
where we have used
\beqs
\de\int_0^T\int_{\supp\,\om}\mathds{1}_{N\geq\De}\De^2N_x^2\om^2\,\dd x\,\dd t\leq C\De^{4-\ga}.
\eeqs
Thus
\beqs
\de\int_0^T\int_{K\cap\{N<\De\}}N_x^2\om^2\,\dd x\,\dd t\leq C\Big(\De+\frac{\De^2}{\de}+\De^{\frac{4-\ga}{2}}\Big).
\eeqs
Note that $N_x=\frac{\rho_x n(\rho)}{(\rho+\eps p)\sqrt{1-\eps u^2}}+\frac{\eps nuu_x}{(1-\eps u^2)^{3/2}}$. Then
$$
N_x^2\geq \half \rho_x^2-C\frac{(\eps nuu_x)^2}{(1-\eps u^2)^{3}}\geq \half\rho_x^2-CN\rho u_x^2,
$$
where we have used that $n'(\rho)=\frac{n}{\rho+\eps p}$, the uniform bound on $u$, and $n(\rho)\leq C\rho$.
Thus, applying again the energy estimate of Lemma \ref{lemma:relativisticderivativebounds}, we have
\beqs
\de\int_0^T\int_{K\cap\{N<\De\}}\rho_x^2\om^2\,\dd x\,\dd t\leq C\Big(\De+\frac{\De^2}{\de}+\De^{\frac{4-\ga}{2}}\Big),
\eeqs
which is the desired conclusion since there exists a constant $C>0$ such that $C^{-1}\rho\leq N\leq C\rho$.
\end{proof}

We also use the following fact, verified by direct calculation from the representation formula of Theorem \ref{thm:entropykernelmain}.

\begin{lemma}\label{lemma:relativisticentropyhessian}
For any weak entropy pair $(\eta,q)$, the ordering
 $$|\nabla^2\eta|\leq C\nabla^2\eta_*$$
 holds with matrices ordered in the usual way.
\end{lemma}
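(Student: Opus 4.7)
The plan is to exploit the integral representation of weak entropies via the entropy kernel together with the explicit formula for $\nabla^2\eta^*$. Any weak entropy pair $(\eta,q)$ arises as
$$
\eta(\rho,u)=\int_\R\psi(s)\,\chi(\rho,v(u)-s)\,\dd s
$$
for some test function $\psi$, so it suffices (after absorbing $\|\psi\|_\infty$ and the diameter of $\supp\psi$ into the constant $C$) to establish the Loewner bound $|\nabla^2\chi(\cdot,v-s)|\le C\,\nabla^2\eta^*$ uniformly in $s$, and then integrate. The transfer between the $U$-coordinates (in which the matrix ordering is stated) and the $(\rho,u)$-coordinates is handled by the chain rule
$$
\nabla_U^2 \eta=J^{-\top}\bigl(\nabla_{(\rho,u)}^2\eta\bigr)J^{-1}+\eta_\rho\,\nabla_U^2\rho+\eta_u\,\nabla_U^2 u,\qquad J=\nabla_{(\rho,u)}U,
$$
and likewise for $\eta^*$; since $J$ and $J^{-1}$ are smooth and uniformly bounded on the invariant region $\{|u|\le M<1/\sqrt\eps,\ 0\le\rho\le\rho_M<\rho_{\max}^\eps\}$, and the first derivatives $\eta_\rho,\eta_u$ are uniformly bounded by $\|\psi\|_\infty$ times integrals of bounded kernel derivatives, the two matrix orderings are equivalent up to a constant.

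Next, I would compute the entries of $\nabla_{(\rho,u)}^2 \chi(\rho,v(u)-s)$ using $\partial_u=v'(u)\partial_v$ and the expansion from Theorem~\ref{thm:entropykernelmain}, namely $\chi=a_1(\rho)G_\lambda+a_2(\rho)G_{\lambda+1}+g(\rho,v)$ with $G_\nu(\rho,v)=[k(\rho)^2-v^2]_+^\nu$. Differentiating, the most singular entries arise from $a_1(\rho)G_\lambda$: using Lemma~\ref{lemma:k-asymptotics}, the derivatives $\chi_{\rho\rho},\chi_{\rho v},\chi_{vv}$ are controlled by combinations of $a_1\cdot G_{\lambda-2},G_{\lambda-1},G_\lambda$ times powers of $k,k',k''$, with the subleading terms $a_2G_{\lambda+1}$ and $g$ strictly better on the support $\{v^2\le k(\rho)^2\}$ by the H\"older estimate $|\partial_v^\beta g|\le C\rho^{1+\theta-2\mu\theta+\beta\theta}[k(\rho)^2-v^2]_+^{\mu-\beta}$. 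After convolving with $\psi$ and carrying out the $s$-integration over the compact set $|v(u)-s|\le k(\rho)$, each entry of $\nabla_{(\rho,u)}^2\eta$ is bounded by $C\rho^{\gamma-2}$ (and by weaker powers of $\rho$ in the off-diagonal and $uu$ entries).

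On the other hand, from the explicit formula \eqref{def:physicalentropy} for $\eta^*$, the entries of $\nabla^2\eta^*$ in $(\rho,u)$ carry exactly the matching weights: $\eta^*_{\rho\rho}\sim \eps p''(\rho)\sim\rho^{\gamma-2}$, $\eta^*_{uu}\sim\rho$, and $\eta^*_{\rho u}\sim\eps u$, while the matrix is positive definite with determinant $\sim p'(\rho)\sim\rho^{\gamma-1}$ away from vacuum (and the same $\eps$--uniform bound in $U$-coordinates follows from the expression for $\nabla^2\eta^*(U)$ in the paper). The Loewner bound $|\nabla_{(\rho,u)}^2\eta|\le C\,\nabla_{(\rho,u)}^2\eta^*$ then follows by a direct entrywise comparison combined with one application of Cauchy--Schwarz in the cross term, using that the positive-definite matrix $\nabla^2\eta^*$ has its minimum eigenvalue scaling like $\rho^{\gamma-1}$ while each entry of $\nabla^2\eta$ has an at-least-compensating factor coming from the vanishing of $\chi$ at the vacuum.

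The main obstacle is the vacuum limit $\rho\to 0$, where both Hessians degenerate: one has to verify that the subleading terms $a_2(\rho)G_{\lambda+1}+g(\rho,v)$ do not spoil the $\rho$--scaling of the leading term, and that the coordinate change $U\leftrightarrow(\rho,u)$ (which itself degenerates at $\rho=0$) does not create uncontrolled factors. Both points are resolved by the sharp bounds $a_1+|a_2|\le C$ and $|\partial_v^\beta g|\le C\rho^{1+\theta-2\mu\theta+\beta\theta}[k(\rho)^2-v^2]_+^{\mu-\beta}$ from Theorem~\ref{thm:entropykernelmain}, which guarantee that the remainder contributions to $\nabla^2\eta$ are strictly less singular (in $\rho$ and in $[k^2-v^2]_+$) than the contributions from $a_1G_\lambda$ that are matched against $\nabla^2\eta^*$.
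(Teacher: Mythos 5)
The paper offers no proof of this lemma beyond the remark that it is ``verified by direct calculation from the representation formula,'' so comparison is necessarily against the implied strategy. Your overall plan --- expand $\chi$ via Theorem~\ref{thm:entropykernelmain}, differentiate, convolve with $\psi$, and compare $\rho$-weights with those of $\nabla^2\eta^*$ --- is the right shape, but two of the load-bearing claims in your write-up are false as stated.

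First, you assert that $\nabla^2_{(\rho,u)}\eta^*$ is positive definite with smallest eigenvalue $\sim\rho^{\gamma-1}$, and that the $(\rho,u)$- and $U$-orderings are ``equivalent up to a constant.'' Neither holds near the vacuum. A direct computation from \eqref{def:physicalentropy} gives
$\eta^*_{\rho\rho}=\frac{\eps^2u^2p''}{1-\eps u^2}$, $\eta^*_{uu}=2\eps(\rho+\eps p)\frac{1+3\eps u^2}{(1-\eps u^2)^3}$, $\eta^*_{\rho u}=\frac{2\eps u(1+\eps p')}{(1-\eps u^2)^2}$,
with determinant
$\frac{2\eps^2u^2}{(1-\eps u^2)^4}\big(\eps p''(\rho+\eps p)(1+3\eps u^2)-2(1+\eps p')^2\big)$,
which is \emph{negative} for small $\rho>0$ and $u\neq 0$ (the bracket tends to $-2$). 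So $\nabla^2_{(\rho,u)}\eta^*$ is indefinite near vacuum, and a Loewner inequality $|\nabla^2_{(\rho,u)}\eta|\le C\,\nabla^2_{(\rho,u)}\eta^*$ cannot hold there. The lemma is only meaningful in the conserved variables $U$, where $\nabla^2_U\eta^*$ is positive definite.

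Second, you wrote down the correct Hessian transformation $\nabla_U^2\eta=J^{-\top}\nabla^2_{(\rho,u)}\eta\,J^{-1}+\eta_\rho\,\nabla_U^2\rho+\eta_u\,\nabla_U^2 u$, but the extra first-derivative term is precisely what prevents the two orderings from being equivalent: conjugation by $J^{-1}$ does preserve the Loewner order, but the added term is \emph{different} for $\eta$ and $\eta^*$, and the matrices $\nabla_U^2\rho$, $\nabla_U^2u$ themselves degenerate at $\rho=0$ (the map $U\mapsto(\rho,u)$ is singular there). Your argument passes from the $(\rho,u)$-ordering --- which, as above, cannot hold --- to the $U$-ordering without controlling this extra term. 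The ``direct calculation'' the lemma requires must therefore be carried out in $U$-coordinates, i.e.\ one must compare the \emph{full} expression $J^{-\top}\nabla^2_{(\rho,u)}\eta\,J^{-1}+\eta_\rho\,\nabla_U^2\rho+\eta_u\,\nabla_U^2 u$ against the analogous one for $\eta^*$, and check that the singular $\rho$-powers and the off-diagonal contributions are absorbed by the positive-definite $\nabla^2_U\eta^*$. Your entrywise bounds from the kernel expansion are a useful ingredient for this, but they do not on their own close the argument.

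One smaller point: the opening reduction ``it suffices to establish $|\nabla^2\chi(\cdot,v-s)|\le C\nabla^2\eta^*$ uniformly in $s$'' cannot be taken literally. For $\lambda<2$ (all $\gamma>1$) the second $v$-derivative of $a_1 G_\lambda$ blows up at $|v-s|=k(\rho)$, and for $\lambda<1$ (i.e.\ $\gamma>5/3$) $\chi_{vv}$ is not even a locally bounded function; the singularities become benign only after the $s$-convolution with $\psi$. You do perform the convolution later, but the reduction step as phrased is not valid.
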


We prove the $H^{-1}_{\loc}$ compactness of the entropy dissipation measures in the following lemma.

\begin{prop}
Let $U^\de=U(\rho^\de,u^\de)$ be a sequence of solutions of \eqref{eq:relativisticartificialviscosity} with initial data $U^\de_0$
satisfying the assumptions of Theorem {\rm \ref{thm:relativisticapproxsolutions}} and Lemma {\rm \ref{lemma:relativisticderivativebounds}}.
Then, for any weak entropy pair $(\eta,q)$, the sequence of entropy dissipation measures
\beq
\eta(U^\de)_t+q(U^\de)_x \qquad \text{ is compact in $H^{-1}_{\loc}(\R^2_+)$.}
\eeq
\end{prop}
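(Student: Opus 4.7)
The plan is to invoke the classical Murat--Tartar interpolation lemma. First, multiplying system \eqref{eq:relativisticartificialviscosity} by $\nabla\eta(U^\delta)$ and combining the entropy relation $\nabla\eta\,\nabla F=\nabla q$ with the chain rule in $x$ gives
\begin{equation*}
\eta(U^\delta)_t+q(U^\delta)_x \;=\; \delta\,\eta(U^\delta)_{xx}\;-\;\delta\,(U^\delta_x)^{\top}\nabla^2\eta(U^\delta)\,U^\delta_x.
\end{equation*}
Since $\eta(U^\delta)$ and $q(U^\delta)$ are bounded on the invariant region furnished by Theorem \ref{thm:relativisticapproxsolutions}, the left-hand side is automatically bounded in $W^{-1,\infty}_{\loc}(\R^2_+)$. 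Murat's lemma then reduces the task to splitting the right-hand side as $A^\delta+B^\delta$ with $A^\delta\to 0$ in $H^{-1}_{\loc}(\R^2_+)$ and $B^\delta$ uniformly bounded in $\mathcal{M}_{\loc}(\R^2_+)$.

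For the quadratic source $B^\delta$, Lemma \ref{lemma:relativisticentropyhessian} yields $|\nabla^2\eta|\le C\nabla^2\eta^*$, so the relative-entropy dissipation of Lemma \ref{lemma:relativisticderivativebounds} supplies the uniform $L^1_{\loc}$ (hence $\mathcal{M}_{\loc}$) bound. For the diffusion term $A^\delta=\partial_x(\delta\,\eta(U^\delta)_x)$, it suffices to show $\delta\,\eta(U^\delta)_x\to 0$ in $L^2_{\loc}(\R^2_+)$. Writing $\eta(U^\delta)_x=\eta_\rho\,\rho^\delta_x+\eta_u\,u^\delta_x$ and extracting from the representation of Theorem \ref{thm:entropykernelmain} that $|\eta_\rho|\le C$ and, crucially, $|\eta_u|\le C\rho^\delta$ (the latter thanks to $\eta_u|_{\rho=0}=0$), one obtains the pointwise control
\begin{equation*}
|\eta(U^\delta)_x|^2 \;\le\; C\bigl(|\rho^\delta_x|^2+\rho^\delta\,|u^\delta_x|^2\bigr).
\end{equation*}
The $\rho^\delta|u^\delta_x|^2$ contribution is absorbed directly by Lemma \ref{lemma:relativisticderivativebounds}, reducing the whole task to proving $\delta\!\int_K|\rho^\delta_x|^2\le C$ uniformly in $\delta$, after which the extra factor of $\delta$ yields $\|\delta\,\eta(U^\delta)_x\|_{L^2(K)}^2=O(\delta)\to 0$.

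For $\gamma\in(1,2]$, this is immediate from $\rho^{\gamma-2}\ge\rho_M^{\gamma-2}$ on the invariant region. For $\gamma\in(2,3)$, the weight $\rho^{\gamma-2}$ degenerates near the vacuum; the plan is then to partition space as $\{\rho^\delta<\Delta\}\cup\{\rho^\delta\ge\Delta\}$, apply Lemma \ref{lemma:relativisticderivativebounds} on the high-density set (producing a prefactor $\Delta^{2-\gamma}$), and use the refined Lemma \ref{lemma:relativisticga>2rhoxbound} on the low-density set to obtain
\begin{equation*}
\delta\!\int_{\rho^\delta<\Delta}|\rho^\delta_x|^2\,\dd x\,\dd t\;\le\; C\bigl(\Delta+\Delta^2/\delta+\Delta^{(4-\gamma)/2}\bigr),
\end{equation*}
and then balance the two by selecting $\Delta=\delta^\alpha$ for any $\alpha\in\bigl(0,\tfrac{1}{\gamma-2}\bigr)$, which drives all pieces to zero as $\delta\to 0$.

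The main obstacle will be this vacuum analysis for $\gamma\in(2,3)$: without the refined vacuum estimate of Lemma \ref{lemma:relativisticga>2rhoxbound} and the delicate balance $\Delta=\delta^\alpha$, the density-derivative contribution is uncontrolled near the vacuum. It is precisely through the exponent $4-\gamma>0$ appearing in this balance that the upper restriction $\gamma<3$ enters the compactness framework.
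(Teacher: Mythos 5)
Your argument reproduces the paper's proof essentially step by step: the entropy-dissipation identity \eqref{eq:approximateentropyform}, the uniform $L^1_{\loc}$ bound on the quadratic term via Lemmas \ref{lemma:relativisticderivativebounds} and \ref{lemma:relativisticentropyhessian}, the $\gamma\le2$ / $\gamma>2$ dichotomy, the vacuum partition $\{\rho^\delta<\Delta\}$ handled through Lemma \ref{lemma:relativisticga>2rhoxbound} with the choice $\Delta=\delta^\alpha$, $\alpha\in\bigl(0,\tfrac{1}{\gamma-2}\bigr)$, and Murat's interpolation. Two peripheral slips, neither fatal: for $\gamma\in(2,3)$ the balance does not give $\delta\int_K|\rho^\delta_x|^2\le C$ uniformly as you state (the high-density piece contributes $\Delta^{2-\gamma}\to\infty$), but only $\delta^2\int_K|\rho^\delta_x|^2\to0$, which is all that is needed; and the exponent condition $4-\gamma>0$ only enforces $\gamma<4$, whereas the restriction $\gamma<3$ actually enters earlier through $\lambda=\tfrac{3-\gamma}{2(\gamma-1)}>0$ in the entropy-kernel construction, not through this compactness estimate.
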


\begin{proof}
Throughout the proof, we write $(\eta^\de, q^\de):=(\eta(U^\de), q(U^\de))$.
Multiplying the equation:
$$
U^\de_t+F(U^\de)_x=\de U^\de_{xx}
$$
by $\nabla\eta(U^\de)$, we find
\beq\label{eq:approximateentropyform}
\eta^\de_t+q^\de_x=\de\eta^\de_{xx}-\de (U^\de_x)^\top\nabla^2\eta^\de U^\de_x.
\eeq
By Lemmas \ref{ineq:energy} and \ref{lemma:relativisticentropyhessian},
we may bound the last term in $L^1_{\loc}(\R^2_+)$ by observing that, for any compact $K\subset\R$,
\beqs
\int_0^T\int_K\big|\de (U^\de_x)^\top\nabla^2\eta^\de U^\de_x\big|\,\dd x\,\dd t\leq\int_0^T\int_K\big|\de (U^\de_x)^\top\nabla^2\eta_*^{\de} U^\de_x\big|\,\dd x\,\dd t\leq C,
\eeqs
independent of $\de>0$.
Applying the compact embedding of $L^1(K)$ into $W^{-1,q}(K)$ with $1<q<2$, we find that
this term is compact in $W^{-1,q}_{\loc}$ for $q<2$.

For the first term $\de\eta^\de_{xx}$ on the right-hand side of \eqref{eq:approximateentropyform}, we note that
$$
|\eta_x^\de|\leq C\big(|\rho_x^\de|(1+(\rho^{\de})^\th)+\rho^\de|u_x^\de|\big).
$$
Now, for $1<\ga\leq 2$, Lemma \ref{lemma:relativisticderivativebounds} implies that
$$
\sqrt{\de}\eta^\de_x \qquad \text{ is uniformly bounded in $L^2(\R_+^2)$,}
$$
so that
$$
\de \eta^\de_{xx} \qquad \text{ is compact in $W^{-1,2}_{\loc}(\R_+^2)$.}
$$
In the case that $\ga>2$, we apply the estimate of Lemma \ref{lemma:relativisticga>2rhoxbound} to deduce that,
on a compact set $K\subset\R$,
\beqas
\de^2\int_0^T\int_K|\eta_x^\de|^2\,\dd x\,\dd t
\leq&\, C\de^2\int_0^T\int_K\big(|\rho_x^\de|^2+\rho^\de|u_x^\de|^2\big)\,\dd x\,\dd t\\
\leq&\,C\de^2\int_0^T\int_K\big(|\rho_x^\de|^2(\mathds{1}_{\rho^\de<\De}
  +\mathds{1}_{\rho^\de\geq\De})+\rho^\de|u_x^\de|^2\big)\,\dd x\,\dd t\\
\leq&\, C\big(\de\De+\De^2+\de\De^{\frac{4-\ga}{2}}+\de\De^{2-\ga}+\de),
\eeqas
where $\De>0$ is to be chosen now.
In fact, choosing $\De=\de^\al$ with $\al\in(0,\frac{1}{\ga-2})$ implies that
this expression converges to $0$. Hence, we see that
$$
\de\eta_{xx}^\de \qquad \text{ is compact in $W^{-1,2}_{\loc}(\R_+^2)$.}
$$
We have therefore shown that the sequence of entropy dissipation measures
$$
\eta^\de_t+q^\de_x \qquad \text{ is compact in $W^{-1,q}_{\loc}(\R_+^2)$}\qquad \mbox{ for some $q\in(1,2)$}.
$$
On the other hand, since the approximate solutions $(\rho^\de,u^\de)$ are uniformly bounded, we also have
$$
\eta^\de_t+q^\de_x \qquad \text{ is bounded uniformly in $W^{-1,\infty}_{\loc}(\R_+^2)$.}
$$
Applying now the compensated compactness interpolation theorem (\textit{cf}.~\cite{Chen2}), we see that
$$
\eta^\de_t+q^\de_x \qquad \text{ is compact in $W^{-1,2}_{\loc}(\R_+^2)$,}
$$
as desired.
\end{proof}

It is an easy exercise to see that the same arguments apply also to the vanishing viscosity approximation
to the alternative system \eqref{eq:relativistic-Euler-alternative}.

\section{Proof of the Main Theorem}\label{sec:mainresultproof}
For clarity, we restate the main theorem of this paper here.

\begin{thm}[Existence of Entropy Solutions]
 Let $(\rho_0,u_0)$ be measurable and bounded initial
 data function satisfying
$$
  |u_0(x)|\leq M_0<\frac{1}{\sqrt{\eps}},\quad   0\leq\rho_0(x)\leq \rho_{M_0}<\rho_{\max}^\eps \qquad\,\, \text{ for a.e. $x\in\R$,}
$$
for some $M_0$ and $\rho_{M_0}$, and let the pressure  function $p(\rho)$ satisfy $p'(\rho)>0$, \eqref{ass:genuine-nonlinearity}, and \eqref{ass:pressure}.
Then there exists $\eps_0>0$ such that the following holds{\rm :}
There exists a sequence $(\rho^\de,u^\de)$ of the viscosity solutions to the approximate equations \eqref{eq:relativisticartificialviscosity} such that,
if $\eps\leq\eps_0$,
then sequence $(\rho^\de,u^\de)$ converges to an entropy solution $(\rho,u)$ of \eqref{eq:relativistic-Euler}
in the sense of Definition {\rm \ref{def:relativistic-entropy-solution}} such that
$$
|u(t,x)|\leq M<\frac{1}{\sqrt{\eps}},\quad 0\leq\rho(t,x)\leq \rho_M<\rho_{\max}^\eps \qquad \,\, \text{ for a.e. $(t,x)\in\R^2_+$},
$$
for some $M$ and $\rho_M$ depending only on $M_0$ and $\rho_{M_0}$,
where the convergence is a.e. and strong in $L^p_{\loc}(\R^2_+)$ for all $p\in[1,\infty)$.
\end{thm}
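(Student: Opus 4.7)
The plan is to combine the vanishing viscosity approximation of \S\ref{sec:relativisticartificial} with the compactness framework of \S\ref{sec:relativisticcompactness} and pass to the limit as $\de\to 0$. First, I would approximate the datum $(\rho_0, u_0)$ by a smooth sequence $(\rho_0^\de, u_0^\de)$ with $(\rho_0^\de-\bar\rho, u_0^\de-\bar u)\in C_c^\infty(\R)$, uniformly bounded by $M_0$ and $\rho_{M_0}$, with $\rho_0^\de$ strictly positive (so as to meet the smoothness hypotheses of Theorem~\ref{thm:relativisticapproxsolutions}), and with $(\rho_0^\de, u_0^\de)\to(\rho_0, u_0)$ almost everywhere; this is standard via mollification, a small positive lift away from vacuum, and a suitable choice of constant end states $(\bar\rho,\bar u)$. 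Theorem~\ref{thm:relativisticapproxsolutions} then yields globally defined viscosity solutions $(\rho^\de, u^\de)$ of \eqref{eq:relativisticartificialviscosity} with $C^1\cap H^1$ regularity, while Lemma~\ref{lemma:relativisticapproxinvariantregion} furnishes the uniform invariant region bounds $|u^\de|\le M<1/\sqrt{\eps}$ and $0\le\rho^\de\le\rho_M<\rho_{\max}^\eps$ with $M$ and $\rho_M$ depending only on $M_0$ and $\rho_{M_0}$.

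Second, for every weak entropy pair $(\eta, q)$, the last proposition of \S\ref{sec:relativisticartificial} supplies the $H^{-1}_{\loc}(\R_+^2)$ compactness of the entropy dissipation measures $\partial_t\eta(\rho^\de, u^\de)+\partial_x q(\rho^\de, u^\de)$. Together with the uniform $L^\infty$ bound and the choice of $\eps_0$ dictated by Proposition~\ref{prop:Dcoefficient} (so that the coefficient $D(\rho)$ is positive on $(0,\rho_M]$ and the Young measure reduction in Theorem~\ref{thm:relativisticreduction} applies), the compactness result Theorem~\ref{thm:relativisticcompactness} yields a subsequence $(\rho^\de, u^\de)\to(\rho, u)$ almost everywhere and in $L^r_{\loc}(\R_+^2)$ for every $r\in[1,\infty)$, with the same pointwise bounds as the approximate solutions.

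Third, I would verify that the limit $(\rho, u)$ satisfies parts (i)--(ii) of Definition~\ref{def:relativistic-entropy-solution}. For the weak conservation laws, I test \eqref{eq:relativisticartificialviscosity} against $\phi\in C_c^1([0,T)\times\R)$ and integrate by parts; the viscous remainder $\de\int_0^T\!\int_\R U^\de\,\phi_{xx}\,\dd x\,\dd t$ vanishes as $\de\to 0$ by the uniform $L^\infty$ bound, the strong $L^p_{\loc}$ convergence passes the limit through the nonlinear fluxes, and the initial-trace term is handled by the a.e.\ convergence of $(\rho_0^\de, u_0^\de)$ together with bounded convergence. For the entropy inequality, any $C^2$ convex weak entropy pair yields the pointwise identity $\partial_t\eta(U^\de)+\partial_x q(U^\de) = \de\,\partial_{xx}\eta(U^\de) - \de\,(U^\de_x)^\top\nabla^2\eta(U^\de)U^\de_x$ along the viscosity solution, and the dissipation term is nonpositive by convexity; testing against a nonnegative $\phi\in C_c^1$ and integrating by parts, the right-hand side tends to zero while the left-hand side converges by strong $L^p_{\loc}$ convergence, producing (ii) in the limit.

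The main obstacle I anticipate is the entropy-inequality step when $\eta$ is only a $C^1$ weak entropy produced by the kernel representation of Theorem~\ref{thm:entropykernelmain}: $\nabla^2\eta$ is not globally controlled up to the vacuum, so the sign of the dissipation must be justified by approximation, for instance by mollifying the generating test function $\psi(s)$ or cutting off the entropy away from $\{\rho=0\}$, carrying out the argument there, and relaxing the cutoff using the sharp regularity estimates of Theorem~\ref{thm:entropykernelmain} on the remainder $g$ and its fractional derivatives $\partial_v^\mu g$. A subsidiary technical point is that $\eps_0$ must be chosen uniformly for $\rho\in[0,\rho_M]$ with $\rho_M$ determined by the initial data; this is supplied directly by Proposition~\ref{prop:Dcoefficient} once the step-one $L^\infty$ bound is in hand.
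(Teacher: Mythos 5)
Your proposal follows the paper's proof of Theorem 8.1 essentially step for step: mollify and lift the initial data away from vacuum, invoke Theorem \ref{thm:relativisticapproxsolutions} and Lemma \ref{lemma:relativisticapproxinvariantregion} for the uniformly bounded viscosity solutions, obtain $H^{-1}_{\loc}$ compactness of the entropy dissipation measures from \S\ref{sec:relativisticartificial}, fix $\eps_0$ via Proposition \ref{prop:Dcoefficient}, and conclude by Theorem \ref{thm:relativisticcompactness}. Your anticipated obstacle about controlling $\nabla^2\eta$ up to vacuum is already addressed in the paper by Lemma \ref{lemma:relativisticentropyhessian} (the ordering $|\nabla^2\eta|\le C\nabla^2\eta^*$), combined with the convexity requirement built into Definition \ref{def:relativistic-entropy-solution}(ii); the paper is otherwise terse on the final verification that the limit is an entropy solution, which you supply in more detail.
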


To prove the theorem, we begin by constructing approximate initial data satisfying the assumptions
of Theorem \ref{thm:relativisticapproxsolutions} and Lemma \ref{lemma:relativisticderivativebounds}.
For each $\de>0$, we cut off the initial data outside interval $(-\de^{-1},\de^{-1})$, mollify with a standard Friedrichs mollifier,
and add a small positive constant $\bar\rho^\de>0$ to the approximate density.
This gives us the initial data $(\rho_0^\de,u_0^\de)$ satisfying the desired assumptions such that
$(\rho_0^\de,u_0^\de)\to(\rho_0,u_0)$ {\it a.e.} and in $L^p_{\loc}(\R)$.
Then Theorem \ref{thm:relativisticapproxsolutions} gives the existence of the approximate solutions,
and the results of \S \ref{sec:relativisticartificial} then imply that these approximate solutions
are uniformly bounded and satisfy the condition:
$$
\eta(U^\de)_t+q(U^\de)_x \qquad \text{ are compact in $H^{-1}_{\loc}(\R^2_+)$}
$$
for all weak entropy pairs $(\eta, q)$.

The uniform bound on the approximate solutions gives a constant $\rho_M>0$ independent of $\de$
such that $\rho^\de\leq\rho_M$ for all $\de$.
Taking $\eps_0>0$ as in Proposition \ref{prop:Dcoefficient} then implies that
the sequence of approximate solutions $(\rho^\de,u^\de)$ satisfies the assumptions
of the compactness framework of \S\ref{sec:relativisticcompactness}.
Thus Theorem \ref{thm:relativisticcompactness} gives the strong convergence of the approximate
solutions $(\rho^\de,u^\de)\to(\rho,u)$ in $L^r_{\loc}(\R^2_+)$ for all $r\in[1,\infty)$.
We can see that the obtained limit is an entropy solution of \eqref{eq:relativistic-Euler}.

Finally, we remark that the proof of the existence part of Theorem \ref{thm:alternativesystem} is similar.

\section{Newtonian Limit}\label{sec:Newtonianlimit}
The final section of this paper is devoted to
the proof of our second main result, Theorem \ref{thm:Newtonianlimit}, concerning the Newtonian limit.
We first introduce some notation to make the explicit dependence of the quantities on $\eps$ in this section.
For $\eps\in(0,1)$, we rewrite \eqref{eq:relativistic-Euler} as
\begin{equation}\label{eq:relativistic-Eulerexplicitdependence}
 \partial_t U^\eps + \partial_xF^\eps(U^\eps)=0,
\end{equation}
where $U^\eps(\rho^\eps,u^\eps)= \Big(\frac{n(\rho^\eps)}{\sqrt{1 - \eps (u^\eps)^2}},\frac{(\rho^\eps + \eps p(\rho^\eps))u^\eps}{1 - \eps (u^\eps)^2}\Big)^\top$
and $F^\eps$ is the associated flux. We recall the definition of $v^\eps=v^\eps(u^\eps)$ from \S \ref{sec:relativisticproperties}:
$$
v^\eps(u^\eps)=\frac{1}{2\sqrt{\eps}}\log\big(\frac{1+\sqrt{\eps}u^\eps}{1-\sqrt{\eps}u^\eps}\big).
$$
We write the Euler equations \eqref{eq:classical-Euler} as
\begin{equation}\label{eq:classical-Eulerexplicitdependence}
 \partial_t U + \partial_x F(U)=0,
\end{equation}
where $U=(\rho,m)^\top$, $F(U)$ is the associated flux, and $F=(m,\frac{m^2}{\rho}+p(\rho))^\top$.

\subsection{Proof of Theorem \ref{thm:Newtonianlimit}}
By assumption,  $(\rho^\eps,v^\eps)$ is a uniformly bounded sequence in $L^\infty$.
Then we may extract a subsequence such that
$$
(\rho^\eps,v^\eps)\stackrel{*}{\rightharpoonup}(\rho,v) \qquad \mbox{weakly-star in $L^\infty$}.
$$
To this sequence, we associate a Young measure $\nu_{t,x}$.
Our aim now is to show that we may apply a reduction argument analogous to that
of Theorem \ref{thm:relativisticreduction}.
However, we observe that Theorem \ref{thm:relativisticreduction} holds only for fixed $\eps>0$,
not for a sequence. We therefore recall the following theorem from \cite{ChenLeFloch, ChenLeFloch2}.

\begin{thm}[Theorem 4.2, \cite{ChenLeFloch}]\label{thm:classicalreduction}
Let $\nu(\rho,v)$ be a probability measure with bounded support in $\{\rho\geq0,\,v\in\R\}$ such that
$$
\langle \nu,\chi_*(s_1) \sigma_*(s_2)-\chi_*(s_2) \sigma_*(s_1)\rangle
=\langle\nu,\chi_*(s_1)\rangle\langle\nu,\sigma_*(s_2)\rangle-\langle\nu,\chi_*(s_2)\rangle\langle\nu,\sigma_*(s_1)\rangle
$$
for any $s_1,s_2\in\R$, where $\chi_*$ and $\sigma_*$ are the entropy and entropy-flux kernels of the classical Euler equations \eqref{eq:classical-Euler}.
Then the support of $\nu$ is either a single point or a subset of the vacuum line, $\{\rho=0\}$.
\end{thm}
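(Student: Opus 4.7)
The plan is to carry out the same imbalance-of-regularity argument that was developed for Theorem \ref{thm:relativisticreduction}, specialized to the classical entropy and entropy-flux kernels $\chi_*$ and $\sigma_*$. All the $\eps$-dependent correction terms disappear, so the argument becomes simpler, but the skeleton is identical. Without loss of generality assume $\supp\,\nu$ is not contained in the vacuum line $\{\rho=0\}$; the goal is to show $\supp\,\nu$ is a single point.

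First I would establish the classical analog of Lemma \ref{lemma:relativisticvertexsupport}: the vertex $(w_{\max}, z_{\min})$ of the smallest triangle $\{z_{\min}\le z\le w\le w_{\max}\}$ containing $\supp\,\nu$ in Riemann-invariant coordinates lies in $\supp\,\nu$. The proof is identical to the relativistic one, using the classical decomposition $\sigma_*(\rho,v-s) - \lambda_\pm \chi_*(\rho,v-s) = (\mp k_* - (v-s))\,\rho\, (k_*'/k_*)\chi_*(\rho,v-s) + R_*$ with $|R_*|\le C|k_*^2 - (v-s)^2|\chi_*$, together with the strict hyperbolicity inequality $\lambda_+(w_{\max},z_1) > \lambda_-(w_2,z_{\min})$ on the non-vacuum part of the triangle boundary.

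Next, applying the commutation relation at pairs $(s_2,s_3)$, $(s_3,s_1)$, $(s_1,s_2)$ and summing after multiplying by $\overline{\chi_*(s_1)}$, $\overline{\chi_*(s_2)}$, $\overline{\chi_*(s_3)}$ respectively kills the right-hand side by symmetry, yielding
\begin{equation*}
\overline{\chi_*(s_1)}\,\overline{\chi_*(s_2)\sigma_*(s_3)-\chi_*(s_3)\sigma_*(s_2)}
+\overline{\chi_*(s_2)}\,\overline{\chi_*(s_3)\sigma_*(s_1)-\chi_*(s_1)\sigma_*(s_3)}
+\overline{\chi_*(s_3)}\,\overline{\chi_*(s_1)\sigma_*(s_2)-\chi_*(s_2)\sigma_*(s_1)}=0.
\end{equation*}
I then apply $P_2=\partial_{s_2}^{\la+1}$ and $P_3=\partial_{s_3}^{\la+1}$, mollify with distinct mollifiers $\phi_2^\de,\phi_3^\de$ in $s_1$, and pass $\de\to 0$. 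The analysis of the mollified product rests on two classical cancellation-of-singularities statements: (i) $\chi_{*,1}P_j\sigma_{*,j}^\de - \sigma_{*,1}P_j\chi_{*,j}^\de$ converges uniformly to a continuous function $X_1(\rho,v,s_1)$ independent of the mollifier; (ii) $P_2\chi_{*,2}^\de P_3\sigma_{*,3}^\de - P_3\chi_{*,3}^\de P_2\sigma_{*,2}^\de$ converges weak-$*$ in measures in $s_1$ (uniformly in $(\rho,v)$) to $Y(\phi_2,\phi_3)\,M_*(\rho)D_*(\rho)\sum_\pm(K^\pm_*)^2\delta_{s_1=v\pm k_*(\rho)}$ with $M_*(\rho)D_*(\rho)>0$ on $\{\rho>0\}$. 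These are the classical specializations of Lemma \ref{lemma:cancellationI} and follow from Proposition \ref{prop:relativisticexplicitsingularities} applied with $\eps=0$ (so that $\chi_*$ is literally $M_\la[k_*^2-v^2]_+^\la$ plus a lower-order remainder), combined with the product-of-measure-and-BV-function formula of \cite{DMLFM} encoded in Lemma \ref{lemma:measureheavisideproduct}.

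The limit $\de\to 0$ in the mollified three-term identity gives
\begin{equation*}
\overline{\chi_*(s_1)}\,\overline{Y(\phi_2,\phi_3)M_*(\rho)D_*(\rho)\sum_\pm (K^\pm_*)^2\delta_{s_1=v\pm k_*(\rho)}}
+\overline{P_1\chi_{*,1}}\,\overline{X_1}-\overline{P_1\chi_{*,1}}\,\overline{X_1}=0,
\end{equation*}
since the last two terms collapse by (i) and weak-$*$ convergence of $P_j\chi_{*,j}^\de\to P_1\chi_{*,1}$. Choosing mollifiers with $Y(\phi_2,\phi_3)\ne 0$ and testing against an arbitrary $\psi(s_1)$ forces $\int \overline{\chi_*(v\pm k_*(\rho))}M_*(\rho)D_*(\rho)\,\dd\nu(\rho,v)=0$. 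The positivity of $M_*D_*$ on $\{\rho>0\}$ combined with $\overline{\chi_*(s)}>0$ for $s\in(z_{\min},w_{\max})$ (which uses the vertex lemma to place $(w_{\max},z_{\min})$ in the interior of $\supp\,\chi_*(s)$) forces $\supp\,\nu\subset V\cup\{(w_{\max},z_{\min})\}$. Writing $\nu=\nu_V+\om\,\de_{(w_{\max},z_{\min})}$ and plugging back into the commutation relation yields $(\om-\om^2)\bigl(\chi_*\sigma_*-\chi_*\sigma_*\bigr)=0$ for a choice of $s_1,s_2$ making the parenthesis nonzero, hence $\om\in\{0,1\}$.

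The main obstacle is step (ii) of the cancellation lemma: one must verify that the coefficient $M_*(\rho)D_*(\rho)$ is strictly positive on $\{\rho>0\}$, where $D_*=a_{1,*}b_{1,*}-2k_*^2(a_{1,*}b_{2,*}-a_{2,*}b_{1,*})$. Unlike the relativistic case, which required the smallness condition $\eps<\eps_0$ in Proposition \ref{prop:Dcoefficient}, the classical coefficient reduces to an $O(1)$ expression proportional to $\rho p''(\rho)+2p'(\rho)$, which is strictly positive by the genuine nonlinearity assumption \eqref{ass:genuine-nonlinearity}. The rest of the argument is a careful tracking of fractional-derivative expansions, which is purely mechanical once the positivity is secured.
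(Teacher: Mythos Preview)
The paper does not actually prove this theorem; it is quoted verbatim from \cite[Theorem 4.2]{ChenLeFloch} and used as a black box in \S\ref{sec:Newtonianlimit}. Your proposal is correct and is precisely the $\eps=0$ specialization of the argument the paper gives for Theorem \ref{thm:relativisticreduction}, which in turn was modeled on the original proof in \cite{ChenLeFloch}; in particular your observation that the positivity of $D_*(\rho)$ follows directly from genuine nonlinearity \eqref{ass:genuine-nonlinearity} without any smallness condition is exactly the point where the classical case is cleaner than the relativistic one.
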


Defining $m^\eps:=\rho^\eps v^\eps$, we show that
$$
\eta(\rho^\eps,m^\eps)_t+ q(\rho^\eps,m^\eps)_x\qquad \text{ is compact in $H^{-1}_{\loc}$}
$$
for any weak entropy pair of $(\eta,q)$ of system \eqref{eq:classical-Eulerexplicitdependence},
and hence argue by using the div-curl lemma as in the proof of Theorem \ref{thm:relativisticcompactness}
to deduce the commutation relation:
$$
\langle \nu,\chi_*(s_1) \sigma_*(s_2)-\chi_*(s_2) \sigma_*(s_1)\rangle
=\langle\nu,\chi_*(s_1)\rangle\langle\nu,\sigma_*(s_2)\rangle-\langle\nu,\chi_*(s_2)\rangle\langle\nu,\sigma_*(s_1)\rangle.
$$
We may then apply Theorem \ref{thm:classicalreduction} to deduce the strong convergence
of the sequence $(\rho^\eps,v^\eps)$.

It remains to prove the $H^{-1}$ compactness of the entropy dissipation measures.
In a slight abuse of notation, we simply write the argument of $(\eta,q)$ as $U^\eps$.
Then
\begin{equation}\label{entdiss}
\eta(U^\eps)_t+q(U^\eps)_x
=\big(\eta(U^\eps)-\eta^\eps(U^\eps)\big)_t
 +\big(q(U^\eps)-q^\eps(U^\eps)\big)_x+\eta^\eps(U^\eps)_t+q^\eps(U^\eps)_x,
 \end{equation}
where $(\eta, q)$ and $(\eta^\eps, q^\eps)$ are weak entropy pairs generated by the same test function $\psi$ by convolution with the
weak entropy and entropy-flux kernels associated to systems \eqref{eq:classical-Eulerexplicitdependence}
and \eqref{eq:relativistic-Eulerexplicitdependence},
respectively.

Since, for each $\eps$, $U^\eps$ is an entropy solution of \eqref{eq:relativistic-Eulerexplicitdependence}, we know that,
for any convex weak entropy pairs $(\eta^\eps, q^\eps)$,
$$
\eta^\eps(U^\eps)_t+q^\eps(U^\eps)_x\leq 0.
$$
Moreover, $\eta^\eps(U^\eps)_t+q^\eps(U^\eps)_x$ is uniformly bounded in $H^{-1}_{\loc}$ since $U^\eps$ is uniformly bounded.
Then Murat's lemma \cite{Murat2} indicates that the injection of the positive cone in $H^{-1}$ into $W^{-1, q}, 1\le q<2$, is compact, which implies
that the convex entropy dissipation measure sequence $\eta^\eps(U^\eps)_t+q^\eps(U^\eps)_x$
is  compact in $W^{-1,q}_{\loc}$ for any $q\in [1, 2)$.
Likewise, for any concave entropy pair $(\eta, q)$, we have
$$
\eta^\eps(U^\eps)_t+q^\eps(U^\eps)_x \qquad \text{ is compact in $W^{-1,q}_{\loc}$ for any $q<2$.}
$$
On the other hand, this expression is clearly bounded in  $W^{-1,\infty}_{\loc}$ by the uniform bounds
on $U^\eps$.
Then the interpolation compactness theorem of \cite{Chen2,DingChenLuo} yields that
$$
\eta^\eps(U^\eps)_t+q^\eps(U^\eps)_x \qquad \text{ is compact in $W^{-1,2}_{\loc}$}
$$
for any convex or concave entropy.

Considering now the first term on the right-hand side of \eqref{entdiss},
$$
\big(\eta(U^\eps)-\eta^\eps(U^\eps)\big)_t+\big(q(U^\eps)-q^\eps(U^\eps)\big)_x,
$$
we apply the bounds of Theorems \ref{thm:chipositive} and \ref{th:fluxremainderexistenceandregularity}
to estimate the difference between the relativistic and classical entropies generated by the same test function $\psi(s)$ by
\begin{equation}
 \begin{split}
  |\eta(\rho,v)-\eta^\eps(\rho,v)|\leq&\,\int_{|v-s|\leq k(\rho)}|\psi(s)||\chi_*(\rho,v-s)-\chi^\eps(\rho,v-s)|\,\dd s\\
  \leq& \,C\eps\int_{|v-s|\leq k(\rho)}|\psi(s)|\,\dd s,
 \end{split}
\end{equation}
where $\chi^\eps$ is the relativistic entropy kernel of Theorem \ref{thm:entropykernelmain}. Similarly, $|q(\rho,v)-q^\eps(\rho,v)|\leq C\eps$.

Thus, for any set $K\Subset\mathbb{R}^2_+$, we have
\begin{equation}
  (\eta(U^\eps)-\eta^\eps(U^\eps))_t+(q(U^\eps)-q^\eps(U^\eps))_x\to0 \qquad \text{ in $H^{-1}(K)$ as $\eps\rightarrow0$.}
\end{equation}
Therefore, the sequence of weak entropy dissipation measures in \eqref{entdiss} is compact in $H^{-1}_{\loc}$.
We have therefore shown the compactness of the weak entropy dissipation measures \eqref{entdiss}
such that
the entropy functions are either convex or concave.
Applying the div-curl lemma in the standard way, as in the proof of Theorem \ref{thm:relativisticcompactness},
we deduce the commutation relation
$$
\langle \nu,\eta_1 q_2-\eta_2 q_1\rangle=\langle\nu,\eta_1\rangle\langle\nu,q_2\rangle-\langle\nu,\eta_2\rangle\langle\nu,q_1\rangle
$$
for all such weak entropy pairs.

We recall now that a weak entropy for the classical Euler equations \eqref{eq:classical-Eulerexplicitdependence}
is convex (respectively concave) if the generating test function is also convex (respectively concave).
As the linear span of convex functions and concave functions is dense in our space of test functions,
we argue by density
to conclude that the commutation relation also holds for the kernels themselves:
$$
\langle \nu,\chi_*(s_1) \sigma_*(s_2)-\chi_*(s_2) \sigma_*(s_1)\rangle
=\langle\nu,\chi_*(s_1)\rangle\langle\nu,\sigma_*(s_2)\rangle-\langle\nu,\chi_*(s_2)\rangle\langle\nu,\sigma_*(s_1)\rangle
$$
for any $s_1,s_2\in\R$. We therefore apply Theorem \ref{thm:classicalreduction} to deduce
the strong convergence of sequence $(\rho^\eps,v^\eps)$ {\it a.e.}
and in $L^r_{\loc}(\R^2_+)$ for all $r\in[1,\infty)$.
Then the proof of Theorem \ref{thm:Newtonianlimit} is concluded in the usual way.

The proof of the Newtonian limit of Theorem 1.4(ii) is similar.

\bigskip
{\bf Acknowledgements}.  The research of Gui-Qiang G. Chen was supported in part by
the UK Engineering and Physical Sciences Research Council Award
EP/L015811/1, EP/V008854, and EP/V051121/1, and the Royal Society--Wolfson Research Merit Award WM090014 (UK).
The research of Matthew Schrecker was supported in part by the UK Engineering and Physical Sciences Research
Council Award EP/L015811/1.
This paper is a continuation of the program initiated by Gui-Qiang G. Chen, Philippe LeFloch,
and Yachun Li in \cite{ChenLeFloch, ChenLeFloch2, ChenLi, ChenLi2}.
The authors thank Professors Ph. LeFloch and Yachun Li for their inputs and helpful discussions.

\end{document}